\theoremstyle{plain}
\DeclareMathOperator{\var}{\mathrm{Var}}
\newcommand{\eps}{\varepsilon}
\newtheorem{thm}{Theorem}[section]
\newtheorem{lem}[thm]{Lemma}
\newtheorem{prop}[thm]{Proposition}
\theoremstyle{definition}
\newtheorem*{rmk}{Remark}
\title{Benford's law and the C$\beta$E}
\author{Nedialko Bradinoff\footnote{Department of Mathematics, Royal Institute of Technology, Lindstedtsvägen 25, SE 10044, Stockholm Sweden. E-mail: nedialko@kth.se} \and Maurice Duits \footnote{Department of Mathematics, Royal Institute of Technology, Lindstedtsvägen 25, SE 10044, Stockholm Sweden. E-mail: duits@kth.se}}
\date{}
\begin{document}
\maketitle
\begin{abstract}
    We study the individual digits for the absolute value of the characteristic polynomial for the Circular $\beta$-Ensemble. We show that, in the large $N$ limit, the first digits obey Benford's Law  and the further digits become uniformly distributed. Key to the proofs is a bound on the rate of convergence in total variation norm  in the CLT for the logarithm of the absolute value of the characteristic polynomial. 
\end{abstract}

\setcounter{tocdepth}{1}

\section{Introduction}

 In this paper we study the behavior of the individual  digits of the random function $|D_{N,\beta}(\theta)|$ where
\begin{equation} 
    D_{N, \beta}(\theta)= \prod_{j=1}^N (e^{i \theta}-e^{i \theta_j}),
\end{equation}
and the $\theta_j \in [0,2 \pi)$ are randomly chosen from the probability measure on $[0,2 \pi)^N$ proportional to
\begin{equation} \label{eq:betaEnsemble}
    \prod_{1 \leq j < k \leq N}|e^{i \theta_j}-e^{i \theta_k}|^\beta d\theta_1\cdots d \theta_N,
\end{equation}
with $\beta>0$ and $N \in \mathbb N$ large.
Note that \eqref{eq:betaEnsemble} is  the  joint probability distribution for the  eigenvalues of the Circular $\beta$-Ensemble (C$\beta$E) from random matrix theory and $D_{N,\beta}(\theta)$ is the characteristic polynomial. It is not difficult to see that the distribution is invariant under translations (modulo $2\pi$)  and thus the distribution of $|D_{N,\beta}(\theta)|$ is independent of $\theta$.  The C$\beta$E is a classical family of ensembles from random matrix theory.  For $\beta=1$, $\beta=2$ and $\beta=4$ the distribution \eqref{eq:betaEnsemble} coincides with that of  the eigenvalues of COE, CUE and CSE matrices \cite{Dyson}.  For general $\beta>0$ it concides with the eigenvalue distribution of randomly chosen truncated CMV matrices \cite{KillipNenciu}.   There is a vast amount of literature on the C$\beta$E and we do not intend to give an exhaustive  list of references. We will  only mention  the most relevant to the present paper.  For a general discussion and more background we refer to \cite[Chapter 2]{Forrester}.  

The universality principle dictates that, as $N\to \infty$, the fluctuations for the C$\beta$E are governed by objects that are  shared with many other point processes that have  similar interactions and the circular symmetry makes the C$\beta$E a particularly popular model for studying these fluctuations.  For instance, when zooming in at scales of order $\sim 1/n$ around a fixed point on the circle, the point process converges to the $\sin_\beta$-process. For $\beta=2$ this is a determinantal point process determined by the sine kernel. For general $\beta$ this process can be characterized as the spectrum of a random  operator \cite{KillipStoicu,Valko2,ValkoVirag}. Important  progress has also  been made on the limiting behavior, as $N \to \infty$, of $|D_{N,\beta}(\theta)|$ as a random function of $\theta$. The random field $\log |D_{N,\beta}(\theta)| $ converges to a Gaussian log-correlated field.  A first result in this direction is the CLT in \cite{Johansson} for linear statistics with smooth test functions (the proof is for $\beta=2$ but can be extended to general $\beta>0$). How such a CLT is related to  weak convergence of  $\log |D_{N,\beta}(\theta)| $ to a Gaussian log-correlated field was explained in \cite{Hughes}.  From this it is then natural to expect that an appropriately normalised $|D_{N,\beta} (\theta)|$ converges to a Gaussian Multiplicative Chaos measure. For $\beta=2$ this was proved in \cite{webb2,Webb}. For $\beta\neq 2$ this is still open and only proved for a gentle  regularization of  $|D_{N,\beta}(\theta)|$  in \cite{Gaultier}. For further  results on the relation between C$\beta$E and Gaussian Multiplicative Chaos  we refer to \cite{Bourgade1,ChaiibiNajnudel}.

The purpose of this paper is to discuss another universal behavior, namely we show that the leading digit satisfies Benford's law. This implies in particular that
\begin{equation}
    \lim_{N \to \infty} \mathbb P(\text{ first digit of } |D_{N,\beta}(\theta)| \text{ is } k)=\log_{10}\left(1+\frac{1}{k}\right).
\end{equation}
Thus, surprisingly,   the  leading digit of $|D_{N,\beta}(\theta)|$ is more likely to take a lower value than a higher one.   The base 10 is not special here and the same result holds for the leading digit in base $b$ with $\log_{10}$ replaced by $\log_{b}$. 

The  fascinating pattern of Benford's law is observed in variety of different data sets from statistics, physics and number theory.  It has various important   applications, such as detecting tax and election fraud. It was first observed by Newcomb in 1881, reportedly after noticing that in logarithmic tables the pages with lower leading digit were much more worn than the ones with higher leading digit.  Afterwards, it was rediscovered and generalized by Benford in 1938 \cite{Benford}, who examined a variety of sequences from both physics and mathematics. A simple and illustrative mathematical example is the sequence $a^n$ for $n \in \mathbb N$. The leading digit  will satisfy Benford's law in base $b$ if and only if $\log_b a \notin \mathbb Q$. Or, equivalently, if and only if addition by $\log_b a$ modulo 1 is ergodic implying that $n \log_b a\mod 1$  for $n=1,2,3,\ldots$ is uniformly distributed.
In fact, one expects Benford's law for any sequence $a_n$ to hold when $\log a_n$ is uniformly distributed $\mod 1$, see  \cite[Th. 1]{Diaconis}. 
For a beautiful overview of various quantities of Benford's law we refer to \cite{Berger,Diaconis} and the book  \cite{Miller}. 

\subsubsection*{Statement of main results}
We will prove the general version of Benford's law:

\begin{thm} \label{thm1}
  Let the digital expansion of $|D_{N,\beta}(\theta)|$ in base $b \in \mathbb N$  be given by 
    $$
        |D_{N,\beta}(\theta)|=\left(\sum_{k=1}^{\infty}d_{k,N}(\theta)b^{1-k}\right)b^{M},
    $$                                                                                
    where $M\in\mathbb{Z}$,  $d_{k,N}:[0,2\pi]^{N}\rightarrow\{0,1,...,b-1\}$, and  $d_{1,N}\not= 0.$
     Then, for any $\ell\in \mathbb Z$ and fixed $\theta\in [0,2 \pi)$, we have
    \begin{equation} \label{eq:Benford}
     \mathbb{P}( d_{1,N}(\theta)=k_1,d_{2,N}(\theta)=k_2,...,d_{\ell,N}(\theta)=k_\ell)=\log_{b}\left(1+\frac{1}{\sum_{j=1}^{\ell}k_j b^{\ell-j}}\right)+o \left((\log N)^{-3/2+\eps}\right),
    \end{equation}
    as $ N \rightarrow\infty$, for any $\eps>0$.
    \end{thm}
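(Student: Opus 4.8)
\emph{Plan of proof.} The plan is to reduce Benford's law to a statement about $Y_N:=\log_b|D_{N,\beta}(\theta)|$ modulo $1$, and then to extract that statement from a quantitative central limit theorem. Fix $\theta$; by the translation invariance of \eqref{eq:betaEnsemble} the law of $|D_{N,\beta}(\theta)|$ does not depend on $\theta$. Unwinding the digit expansion, the event $\{d_{1,N}(\theta)=k_1,\dots,d_{\ell,N}(\theta)=k_\ell\}$ holds exactly when $|D_{N,\beta}(\theta)|b^{-M}\in[\mu,\mu+b^{1-\ell})$ with $\mu=\sum_{j=1}^{\ell}k_jb^{1-j}\in[1,b)$, equivalently when the fractional part $\{Y_N\}$ lies in the interval $I_m:=\big[\{\log_b m\},\,\{\log_b m\}+\log_b(1+\tfrac1m)\big)$, where $m=\sum_{j=1}^{\ell}k_jb^{\ell-j}$ satisfies $b^{\ell-1}\le m<b^{\ell}$, so $I_m\subseteq[0,1)$ without wrap-around and $|I_m|=\log_b(1+\tfrac1m)$. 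Thus \eqref{eq:Benford} is implied by the uniform discrepancy bound: for every interval $I\subseteq[0,1)$,
\[
  \Big|\,\mathbb P\big(\{Y_N\}\in I\big)-|I|\,\Big|=o\big((\log N)^{-3/2+\eps}\big)\quad\text{as }N\to\infty.
\]

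I would obtain this discrepancy bound by combining two ingredients. The first, and the technical core, is a bound on the total variation distance between the law of $\log|D_{N,\beta}(\theta)|$ and the centred Gaussian of variance $\sigma_N^2:=\var\big(\log|D_{N,\beta}(\theta)|\big)=\tfrac1\beta\log N+O(1)$, namely that it is $o((\log N)^{-3/2+\eps})$ (a total-variation strengthening, with rate, of the classical CLT of \cite{Johansson}). No centring is needed: the one-point function of \eqref{eq:betaEnsemble} is uniform and $\int_0^{2\pi}\log|e^{i\theta}-e^{i\phi}|\,\tfrac{d\phi}{2\pi}=0$, so $\mathbb E\log|D_{N,\beta}(\theta)|=0$. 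Since total variation distance equals $\sup_A|\mathbb P(X\in A)-\mathbb P(X'\in A)|$ over Borel sets and is unchanged by the scaling $x\mapsto x/\log b$, this gives $\big|\mathbb P(\{Y_N\}\in I)-\mathbb P(\{G\}\in I)\big|=o((\log N)^{-3/2+\eps})$ for $G\sim\mathcal N\big(0,\sigma_N^2/(\log b)^2\big)$. The second ingredient is elementary: by Poisson summation the density on $[0,1)$ of a centred Gaussian of variance $s^2$ reduced modulo $1$ equals $1+2\sum_{n\ge1}e^{-2\pi^2n^2s^2}\cos(2\pi nx)$, so with $s^2=\sigma_N^2/(\log b)^2\asymp\log N$ its uniform distance from $1$ is $O(N^{-c})$ for some $c=c(\beta,b)>0$; hence $\big|\mathbb P(\{G\}\in I)-|I|\big|=O(N^{-c})$, which is negligible against $(\log N)^{-3/2+\eps}$. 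Adding the two estimates yields the discrepancy bound, and hence Theorem~\ref{thm1}.

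It remains to prove the total variation estimate, which is where the real work lies. The natural starting point is the description of $|D_{N,\beta}(\theta)|$ through the Verblunsky coefficients of the associated random CMV matrix \cite{KillipNenciu}: running the Szeg\H{o} recursion at the evaluation point yields a representation of $|D_{N,\beta}(\theta)|$ as a product of independent factors, i.e.\ $\log|D_{N,\beta}(\theta)|$ is equal in distribution to $S_N:=\sum_{m=0}^{N-1}\log|1-\alpha_m|$, with $\alpha_0,\dots,\alpha_{N-1}$ independent and rotation invariant on the unit disk, $\alpha_m$ having density $\propto(1-|\alpha|^2)^{\frac\beta2 m-1}$ (and $\alpha_0$ uniform on the unit circle). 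One then runs an Edgeworth-type analysis of this sum of independent, non-identically distributed summands: cumulants are additive, and the estimate $\mathbb E|\alpha_m|^2\asymp(m+1)^{-1}$ gives $\sum_m|\kappa_3(\log|1-\alpha_m|)|=O(1)$ with comparable control of the higher cumulants, so the leading Edgeworth correction to the density of $S_N/\sigma_N$ has $L^1$-norm $\asymp|\kappa_3(S_N)|\,\sigma_N^{-3}=O((\log N)^{-3/2})$ and the remaining terms are of strictly lower order. The delicate points, and where I expect the main obstacle, are: (i) passing from a Kolmogorov-type estimate to a total variation one, which requires $S_N$ to have a sufficiently regular density---this is supplied by the genuinely spread-out first summands $\alpha_m$ ($m$ small), whose convolution smooths the law; and (ii) the accompanying bound on the characteristic function $\prod_m\mathbb E\big[e^{it\log|1-\alpha_m|}\big]$, which must be shown to be small and integrable over the \emph{whole} frequency line, using quantitative decay of finitely many early factors together with the fact that all remaining factors have modulus $\le1$. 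These, combined with the cumulant computations and a routine truncation of the (exponential, hence heavier-than-Gaussian) left tail of $S_N$, give the total variation rate; the mild loss $(\log N)^{\eps}$ absorbs the slack in the remainder and characteristic-function bounds, and Theorem~\ref{thm1} follows via the reduction above.
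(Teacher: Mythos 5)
Your overall reduction is exactly the one the paper uses: rewrite the digit event as $\{\log_b|D_{N,\beta}(\theta)|\bmod 1\in I\}$ for a suitable subinterval $I\subset[0,1)$, apply a total-variation CLT for $\log|D_{N,\beta}(\theta)|$, and then compare a Gaussian with variance $\sigma_N^2\asymp\log N$ reduced mod~$1$ to the uniform distribution via Poisson summation (this is the paper's Lemma~4.2 and Proposition~4.3). Your Verblunsky/CMV route to a product-of-independents representation is also the one the paper acknowledges as an alternative to its Selberg-integral derivation, and leads to the same Gamma-function formula \eqref{eqn:charfn}. None of this is a gap.

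The gap is in your step (ii), which is the entire technical content of Section~3 of the paper. The phrase ``quantitative decay of finitely many early factors together with the fact that all remaining factors have modulus $\le1$'' cannot deliver the rate you need. Finitely many early factors produce at best polynomial decay $|\psi_{N,\beta}(t)|\lesssim |t|^{-c}$ for $|t|$ large, with $c$ fixed. But for the Plancherel/$L^1$ argument to give $o(T_N^{-3+\eps})$ you need $\psi_{N,\beta}(t/T_N)$ to be \emph{Gaussian-small}, i.e.\ $\lesssim e^{-ct^2}$, throughout the intermediate window $T_N\le|t|\lesssim N^{5/7}T_N$, and only then switch to a polynomial bound. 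Polynomial decay starting at $|t|\sim T_N$ would contribute an error of order $T_N^{-(c-1)}$ from that window, which does not improve with $N$ at the required speed unless $c$ is enormous; and extracting such a large effective exponent from a \emph{bounded} number of factors fails because the implied constants in the factorwise Stirling bounds grow rapidly with the factor index. The paper's Proposition~3.1 gets the Gaussian decay in the low-frequency regime by using a \emph{number of factors that grows with $N$} (all $j$ up to roughly $t\cdot 2/\beta$), and Propositions~3.2 and~3.5 then handle the transition and high-frequency regions with a separate decomposition (Lemma~2.4 and the auxiliary quantity $P_{N,\beta}$) precisely because the naive termwise bounds exceed $1$ for some of the factors.

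There is also a more conceptual warning that your proposal brushes past. Arguing that the cumulant bounds $|\kappa_k(S_N)|=O((k-1)!)$ for $k\ge3$ make the Edgeworth density expansion valid with $L^1$ error $O(\sigma_N^{-3})$ is exactly the Keating--Snaith heuristic that the paper explicitly flags as incomplete (this is the content of the Remark after Theorem~\ref{thm1} and of Appendix~\ref{sec:counter}): the counterexample $X_n=10^{Y_n}$, $Y_n\sim\mathrm{Bin}(n,1/2)$, satisfies the same cumulant bounds yet has no convergent density and violates Benford's law. So the cumulant computations you cite cannot, on their own, justify the density-level (equivalently, total-variation) statement; the uniform-in-$t$ characteristic-function bound that you defer is not merely a ``delicate point'' but the load-bearing wall, and your sketch of how to obtain it does not hold up.
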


  The proof of this result will be given in Section \ref{proofsthms}.

  From \eqref{eq:Benford} we can compute the distribution of a single digit $d_{\ell,N}$, for $\ell\geq 2$,  to be: 
  \begin{equation} \label{eq:Benfordkthdigit}
    \mathbb{P}( d_{\ell,N}(\theta)=k)=\sum_{s=b^{\ell-2}}^{b^{\ell-1}-1} \log_{b}\left(1+\frac{1}{b s+k}\right)+o \left((\log N)^{-3/2+\eps}\right),
    \end{equation}
    as $N\to \infty$.
From \eqref{eq:Benfordkthdigit} we see that, for  any fixed $\ell$, and $N \to \infty$, the $\ell$-th digit in base $b$ is more likely to be close to $1$ than  $b-1$.    This effect, however, decreases when $\ell$ increases. In fact, if $\ell$ grows with $N$, one expects from \eqref{eq:Benfordkthdigit} that the distribution of the $\ell$-th digit will converge to a uniform distribution.  This brings us to our second main result of this paper. 

\begin{thm} \label{thm2}
   Let $L \in \mathbb N$ and, for $\ell=1,\ldots,L$, let $m_{\ell}:\mathbb{N}\rightarrow \mathbb{N}$ be a function such that $m_{\ell}(N)\rightarrow\infty$ as $N\rightarrow\infty$ and $m_i(N)<m_j(N)$ for $i<j$.  Then, for any $\eps>0$,
\begin{equation}
\mathbb{P}(d_{m_1(N),N}=k_1, \ldots, d_{m_L(N),N}=k_L)=\frac{1}{b^L}+\mathcal O\left(\frac{1}{(\log N)^{3/2-\varepsilon}}+ \frac{1}{b^{m_1(N)}}\right),
\end{equation}
as $N\to \infty$.
\end{thm}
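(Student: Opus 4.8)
The plan is to rewrite the event in terms of the fractional part $t_N:=\{\log_b|D_{N,\beta}(\theta)|\}$ and then to split the error into a ``distributional'' contribution, measuring how far the law of $t_N$ is from the uniform law on $[0,1)$, and a ``geometric'' contribution, measuring how far the relevant set of mantissas is from having measure $b^{-L}$. Every digit $d_{k,N}(\theta)$ is a Borel function of $t_N$, so there is a Borel set $S_N\subseteq[0,1)$ — depending only on $b$, $L$, the positions $m_1(N)<\dots<m_L(N)$ and the prescribed values $k_1,\dots,k_L$ — with $\{d_{m_1(N),N}=k_1,\dots,d_{m_L(N),N}=k_L\}=\{t_N\in S_N\}$. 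Consequently
\[
\mathbb P\big(d_{m_1(N),N}=k_1,\dots,d_{m_L(N),N}=k_L\big)-\frac{1}{b^L}
=\Big(\mathbb P(t_N\in S_N)-|S_N|\Big)+\Big(|S_N|-\frac{1}{b^L}\Big),
\]
and it suffices to bound the first bracket by $d_{\mathrm{TV}}\big(t_N,\mathrm{Unif}[0,1)\big)$ and the second by $O\big(b^{-m_1(N)}\big)$.

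For the distributional contribution I would use the total variation bound in the CLT for $\log|D_{N,\beta}(\theta)|$ — the key estimate of the paper, which already drives the proof of Theorem~\ref{thm1} (see Section~\ref{proofThm1}) — stating that the law of $\log|D_{N,\beta}(\theta)|$ lies within $O\big((\log N)^{-3/2+\eps}\big)$ in total variation of a Gaussian $G_N$ with variance $v_N\asymp\log N$. Total variation does not increase under the Borel map $x\mapsto\{x/\log b\}$, so $d_{\mathrm{TV}}\big(t_N,\{G_N/\log b\}\big)=O\big((\log N)^{-3/2+\eps}\big)$. Poisson summation shows that $\{G_N/\log b\}$ has a density on $[0,1)$ differing from $1$ by at most $\sum_{n\neq0}e^{-2\pi^2n^2v_N/(\log b)^2}\le C\,e^{-c\log N}$, the mean of $G_N$ being an irrelevant shift as the uniform law is translation invariant; hence $d_{\mathrm{TV}}\big(\{G_N/\log b\},\mathrm{Unif}[0,1)\big)=O(N^{-c})$. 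Combining, $d_{\mathrm{TV}}\big(t_N,\mathrm{Unif}[0,1)\big)=O\big((\log N)^{-3/2+\eps}\big)$, which is the first error term.

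For the geometric contribution I would pass to the mantissa variable $x=b^t\in[1,b)$, so that $|S_N|=\int_{A_N}\frac{dx}{x\log b}$, where $A_N$ is the set of $x\in[1,b)$ whose $m_j$-th base-$b$ digit equals $k_j$ for $j=1,\dots,L$. For $N$ large (so that $m_1(N)\ge 2$) partition $[1,b)$ into the $(b-1)b^{m_1(N)-2}$ half-open cells of length $b^{2-m_1(N)}$ on which the first $m_1(N)-1$ base-$b$ digits are constant. An affine rescaling identifies each cell with $[0,1)$ and the digits of $x$ at positions $\ge m_1(N)$ with the base-$b$ digits of a uniform point of $[0,1)$, which are i.i.d.\ uniform on $\{0,\dots,b-1\}$; hence $A_N$ meets each cell in a set whose Lebesgue measure is \emph{exactly} $b^{-L}$ times the length of the cell. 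Since $x\mapsto(x\log b)^{-1}$ has relative oscillation $O\big(b^{2-m_1(N)}\big)$ across each cell and $\int_1^b\frac{dx}{x\log b}=1$, summing the cell contributions gives $|S_N|=b^{-L}+O\big(b^{2-L-m_1(N)}\big)=b^{-L}+O\big(b^{-m_1(N)}\big)$. Adding the two contributions yields the stated estimate.

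The genuinely hard ingredient is the total variation bound in the CLT, but it is proved independently of $L$ and of the positions $m_j$, so it transfers to this setting for free. Within the present argument the only point requiring care is the geometric estimate: one must verify that the base-$b$ digit indicator at position $m_1(N)$, integrated against the (nearly uniform) law of the mantissa, equals $b^{-L}$ up to an error controlled purely by the distortion of the map $t\mapsto b^t$ at scale $b^{2-m_1(N)}$. This is exactly what produces the second error term $b^{-m_1(N)}$ and explains why, unlike in Theorem~\ref{thm1} where the positions are fixed, it cannot be dispensed with.
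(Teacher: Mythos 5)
Your proposal is correct, and the overall architecture matches the paper's: you decompose the error into a distributional term, bounded by the total-variation CLT for $\log|D_{N,\beta}(\theta)|$ combined with Poisson summation to compare a wrapped Gaussian to the uniform law on $[0,1)$ (this is exactly the content of the paper's Proposition~\ref{prop:cortotalvariation}, built from Theorem~\ref{thm:totalvariation} and Lemma~\ref{lemma:poissonsum}), plus a geometric term controlling how far the Lebesgue measure of the mantissa set deviates from $b^{-L}$. Where you genuinely diverge from the paper is in the proof of the geometric estimate, i.e.\ the analogue of Lemma~\ref{lemma:lengthofintervals}. The paper argues combinatorially: it compares the extreme sets $\mathbb{A}_{\mathbf 0}$ and $\mathbb{A}_{\mathbf{b-1}}$ via a ``carry'' or shift operation on digit strings, obtains the two-sided pinching $|\mathbb{A}_{\mathbf 0}|\geq|\mathbb{A}^{m_1,\dots,m_L}_{k_1,\dots,k_L}|\geq |\mathbb{A}_{\mathbf{b-1}}|>|\mathbb{A}_{\mathbf 0}|-\log_b(1+b^{1-m_1})$, and then uses that the $b^L$ measures sum to $1$. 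You instead partition $[1,b)$ into the $(b-1)b^{m_1-2}$ cells of length $b^{2-m_1}$ on which the first $m_1-1$ digits are constant, observe that within each cell the constraint set has Lebesgue measure exactly $b^{-L}$ times the cell length (by the i.i.d.-uniformity of the later base-$b$ digits of a uniform point after affine rescaling), and then account for the change of variables $t=\log_b x$ by bounding the relative oscillation of $x\mapsto 1/(x\log b)$ on a cell by $O(b^{2-m_1})$ since $x\geq1$; summing over cells and using $\int_1^b \frac{dx}{x\log b}=1$ gives $|S_N|=b^{-L}+O(b^{2-L-m_1})=b^{-L}+O(b^{-m_1})$. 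This is a cleaner and more conceptual route than the paper's digit-shift bookkeeping, and it yields a bound of the same order (the paper's sharper constant $\log_b(1+b^{1-m_1})$ is immaterial for the stated $O(b^{-m_1(N)})$). The distributional half of your argument is essentially identical to the paper's, only phrased in terms of non-expansiveness of total variation under the Borel map $x\mapsto\{x/\log b\}$ rather than via the explicit rescaled intervals.
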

The proof of this result will be given in Section \ref{proofsthms}.

\begin{rmk}
     For  the special value $\beta=2$, Benford's law for $|D_{N,\beta}(\theta)|$ as formulated in Theorem \ref{thm1} was first mentioned by Kontorovich and Miller \cite{KantMill}. However, we want to point out that their proof is not complete. Indeed, their argument is based on a heuristic derivation   by Keating and Snaith in \cite{KeatingSnaith} on the limiting behavior of the probability density function of $\log |D_{N,\beta}(\theta)|$ as $N\to \infty$ based on bounds on the cumulants. A rigorous justification of this heuristic derivation requires additional arguments (the strong bounds on the cumulants will only give convergence in distribution, but not convergence of the density function). In fact, our main technical arguments in   Section 3 below are   providing such  a justification. We will  return to this issue in Section \ref{sec:CLTweak} and also discuss a counterexample in Appendix \ref{sec:counter}. 
\end{rmk} 

\subsubsection*{Convergence in Total Variation norm}
We will now comment on our method for proving Theorems \ref{thm1} and \ref{thm2}.

Roughly speaking, a random variable $X$  will satisfy Benford's law if $\log X$ (we will restrict to positive valued random variables) is close to a uniform distribution on $\mathbb R$. The latter is of course not a well-defined probability measure and one typically arrives at Benford's law in a limiting procedure. For instance, if $X=X_n$ is a sequence of positive random variables such that $\var \log X_n \to \infty$ and, as $n\to \infty$, 
\begin{equation} \label{eq:logCLT}
 \frac{\log X_n -\mathbb  E \log X_n }{\sqrt{\var \log X_n }}\to \mathcal N(0,1),
\end{equation}
in distribution, then $\log_b X_n \mod 1$ is close to a uniform distribution on $[0,1]$, and this is an indication (but not a sufficient condition) that Benford's law may hold. From this point of view,  Benford's law can be seen as a Central Limit Theorem  for \emph{products} of independent random variables.  See for instance, \cite[Theorem 6.6]{Berger} for a result along this line for products of iid random variables. It is known \cite{KeatingSnaith,Su} that, for some explicit constant $c$,  $\var \log |D_{N,\beta}(\theta)| \sim  c \log N$ as $N\to \infty$ and that \eqref{eq:logCLT} holds with $X_N=|D_{N,\beta}(\theta)|$ which indicates that Theorem \ref{thm1} indeed holds. Note also  that one can write  $D_{N,\beta}(0)$, in law, as a product of independent random variables  (but not identically distributed), see for example \cite[Prop. 2.2]{BHNY}.

We stress however that \eqref{eq:logCLT} is not a sufficient requirement for Benford's law to hold, and it is not hard to construct counterexamples of sequences that satisfy \eqref{eq:logCLT} but not Benford's law. Indeed,  consider the random variable $X_n=10^{Y_n}$ where $Y_n$ is a binomial random variable $Y_n\sim Bin(n,\frac12)$. Then, clearly, $\log X_n$ satisfies \eqref{eq:logCLT} but the first digit of $X_n$ is always $1$ since $Y_n$ is discrete and takes values in $\mathbb N$. To obtain a continuous counterexample one can consider $X_n=10^{Y_n+\varepsilon Z}$ where $Z\sim \mathcal{N}(0,1)$. In this case, the first digit is not necessarily 1, but for small $\varepsilon$ it is very skewed towards $1$ and more so than in Benford's law.  The point is that (high) frequencies in the distribution function for $\log X_n$ can strongly influence the first digits. We therefore need a stronger control on the convergence in \eqref{eq:logCLT}. We prove the following criterion for Benford's law for a sequence of positive random variables in terms of convergence in total variation norm at the logarithmic scale.
\begin{thm}\label{thm:sufficientconditions}
Let $(\alpha_N)_{N\in\mathbb{N}},(\gamma_N)_{N\in\mathbb{N}}$ be an sequences of positive real numbers such that
$$\alpha_N\rightarrow\infty,\quad\text{as}\quad N\rightarrow\infty, \quad\gamma_{N}\rightarrow0,\quad\text{as}\quad N\rightarrow\infty.$$ 
Suppose $(X_N)_{N\in\mathbb{N}}$ is a sequence of positive random variables such that 
$$d_{TV}(\log X_N, \alpha_{N}\mathcal{N}(0,1))=\mathcal{O}(\gamma_N).$$
Assume further that $X_N$ has a digital expansion in base $b\in\mathbb{N}$, given by
    $$
       X_N=\left(\sum_{k=1}^{\infty}d_{k,N}b^{1-k}\right)b^{M}.
    $$ 
Then for any $\ell\in\mathbb{N}$
\begin{equation}\label{eqn:thm1eqn}
 \mathbb{P}( d_{1,N}=k_1,d_{2,N}=k_2,...,d_{\ell,N}=k_\ell)=\log_{b}\left(1+\frac{1}{\sum_{j=1}^{\ell}k_j b^{\ell-j}}\right)+\mathcal{O}\left(\gamma_N+\exp\left(-\frac{2\pi\alpha_N^2}{\log^2b}\right)\right),
\end{equation}
as $N\rightarrow\infty$. Furthermore, assuming the notation of Theorem \ref{thm2},
\begin{equation}\label{eqn:thm2eqn}
\left|\mathbb{P}(d_{m_1(N),N}=k_1, \ldots, d_{m_L(N),N}=k_L)-\frac{1}{b^L}\right|=\mathcal{O}\left(\gamma_N+\exp\left(-\frac{2\pi\alpha_N^2}{\log^2b}\right)+ \frac{1}{b^{m_1(N)}}\right),
\end{equation}
as $N\rightarrow\infty$.
\end{thm}

  The proof of this result will be given in Section \ref{proofsthms}.

We prove a bound on the total variation distance between $\log |D_{N,\beta}(\theta)|/\sqrt{\var \log |D_{N,\beta}(\theta)|}$ and $\mathcal{N}(0,1)$ that will imply Benford's law via Theorem \ref{thm:sufficientconditions}. The bound that we obtain for convergence in total variation is $o(1/(\log N)^{3/2-\varepsilon})$, for arbitrary $\varepsilon>0$, which we believe to be close to optimal. Note that it is consistent with previous bounds we found in the literature for convergence in Kolmogorov distance of $\mathcal O(1/(\log N)^{3/2})$, obtained in \cite{BHNY, Ashkan} for the classical case $\beta=2$. This bound also bounds the rate of convergence to Benford's law in Theorem \ref{thm1}.

Our analysis is based on a thorough analysis of the characteristic function, or Fourier transform, of the linear statistic $\log \left|D_{N,\beta}(\theta)\right|$, defined by
\begin{equation}
    \psi_{N,\beta}(t)=\mathbb E\left[\left|D_{N,\beta}(\theta)\right|^{it} \right]=\mathbb E\left[e^{it \log \left|D_{N,\beta}(\theta)\right|} \right].
\end{equation}
By the circular symmetry, the right-hand side does not depend on $\theta$ and we have therefore omitted $\theta$ in the notation on the left-hand side. What we show is a bound on $\psi_{N,\beta}(t)$ for all $t \in \mathbb R$ uniform in $N$.  Our method for bounding this function is based on the  Selberg integral which  gives a representation of $\psi_{N,\beta}(t)$ as a product of gamma functions. Note that bounds for $\psi_{N,\beta}(t)$ have been derived for $t$ in  intervals that can grow moderately with $N$ (see for instance in  \cite{DalBorgo}), but we will need bounds that are valid for arbitrary $t$ and give us bounds on the  $L^1$ and $L^2$ norms. These bounds produce bounds on the total variation distance in the Central Limit Theorem.  Note also that there are many results in the literature that give bounds on the tails of the distribution function for $\log |D_{N,\beta}(\theta)|$, but here we are interested in estimating the presence of higher frequencies. The total variation distance is the natural metric in this context.

We would like to point out that if one only wants to prove weak convergence to Benford's law in  Theorem  \ref{thm1} (without a rate of convergence), then there is an elegant argument by Giuliano \cite[Theorem 2.1]{Giuliano} saying that it is sufficient (and necessary)  to prove that $\psi_{N,\beta}(h) \to 0$ as $N\to \infty$  for non-zero integers $h \in \mathbb Z\setminus \{0\}$. We will however control $\psi_{N,\beta}(t)$ for all values of $t$, which allows us to include  a rate of convergence in Theorem \ref{thm1} and to prove Theorem \ref{thm2}.

\subsection*{Overview of the rest of the paper}

We discuss the Selberg integral and its implications for the cumulants of $\log \left|D_{N,\beta}(\theta)\right|$ in Section \ref{sec:selberg}. This section does not contain new results, but discusses  various known results from the literature. In Section \ref{sec:bound} we provide bounds on the characteristic function $\psi_{N,\beta}(t)$. These bounds allow us to prove Theorem \ref{thm1} and Theorem \ref{thm2} in Section \ref{proofsthms}. Finally, in Appendix \ref{sec:counter} we discuss an example showing that a weak CLT or bounds on the limiting behavior of the cumulants is not enough to deduce Benford's law.

\subsection*{Acknowledgements}
Both authors were  supported by the Swedish Research Council (VR),   grant no 2016-05450 and grant no. 2021-06015, the European Research Council (ERC), Grant Agreement No. 101002013.  We are grateful to Kurt Johansson and Gaultier Lambert for  discussions and pointing us to relevant references.

\section{The Selberg integral and some first implications} \label{sec:selberg}
We start with recalling some preliminaries on the Selberg integral and its implications for the C$\beta$E. All results in this section are well-known in the  random matrix theory literature, but for completeness we include various simple proofs.
\subsection{Preliminaries}
We will need the well-known gamma function $\Gamma:\{z\in\mathbb{C}: \Re(z)>0\}\rightarrow\mathbb{C}$, given by the absolute convergent integral,
$$
\Gamma(z)=\int_{0}^{\infty}x^{z-1}e^{-x}dx.
$$ Its extension to a meromorphic function on $\mathbb{C}\setminus\mathbb{Z}_{\leq0}$ has the following representation:
\begin{equation}\label{eqn:WGamma}
\Gamma(z)=\frac{e^{-\gamma z}}{z}\prod_{n=1}^{\infty}\left(1+\frac{z}{n}\right)^{-1}e^{z/n}.
\end{equation}
Selberg proved the following beautiful statement.
\begin{thm}[Selberg integral]\label{thm:Selberg}
Let $n\in\mathbb{N}$, $\alpha,\beta,\gamma\in\mathbb{C}$, with $Re(\alpha)>0,Re(\beta)>0$ and $Re(\gamma)>-\min\{(1/n),Re(\alpha)/(n-1), Re(\beta)/(n-1)\}$ and let
\begin{equation}
S_n(\alpha,\beta,\gamma)=\int_{0}^{1}...\int_{0}^{1}\prod_{j=1}^{n}t_j^{\alpha-1}(1-t_j)^{\beta-1}\prod_{1\leq j<k\leq n}|t_i-t_j|^{2\gamma}dt_1...dt_n.
\end{equation}
Then,
\begin{equation}\label{Selbergevaluated}
S_n(\alpha,\beta,\gamma)=\prod_{j=0}^{n-1}\frac{\Gamma(\alpha+j\gamma)\Gamma(\beta+j\gamma)\Gamma(1+(j+1)\gamma)}{\Gamma(\alpha+\beta+(n+j-1)\gamma)\Gamma(1+\gamma)}.
\end{equation}
\end{thm}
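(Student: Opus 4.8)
The plan is to reproduce Aomoto's proof, which is the shortest route I know and which meshes well with the Gamma-function machinery already in place: everything is generated from the one-dimensional Euler beta integral together with a first-order recursion in the parameter $\alpha$. The case $n=1$ has an empty product over pairs, so $S_1(\alpha,\beta,\gamma)=\int_0^1 t^{\alpha-1}(1-t)^{\beta-1}\,dt=\Gamma(\alpha)\Gamma(\beta)/\Gamma(\alpha+\beta)$, which is \eqref{Selbergevaluated} in that case, and this Euler beta evaluation is the only genuinely computational input. For general $n$ both sides of \eqref{Selbergevaluated} extend to meromorphic functions of $(\alpha,\beta,\gamma)$ — the right-hand side through the Weierstrass product \eqref{eqn:WGamma}, the left-hand side by differentiating under the integral on the open set where the integrand and all its formal parameter derivatives are absolutely integrable — so it is enough to prove the identity on a convenient open piece of that set, say with $\Re\alpha$ and $\Re\beta$ large and $\Re\gamma>0$, and then invoke analytic continuation.

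The core is Aomoto's recursion. On the convergent region I would set, for $0\le k\le n$,
$$A_k=\int_{[0,1]^n}\Big(\prod_{i=1}^{k}t_i\Big)\prod_{j=1}^{n}t_j^{\alpha-1}(1-t_j)^{\beta-1}\prod_{1\le i<j\le n}|t_i-t_j|^{2\gamma}\,dt_1\cdots dt_n,$$
so that $A_0=S_n(\alpha,\beta,\gamma)$, noting that since the factor $\prod_{j}t_j^{\alpha-1}(1-t_j)^{\beta-1}\prod_{i<j}|t_i-t_j|^{2\gamma}$ is symmetric, the integral of $\prod_{i\in I}t_i$ against it depends only on $|I|$. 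Integrating $\int_{[0,1]^n}\partial_{t_k}\big(t_k\cdot(\text{integrand of }A_{k-1})\big)\,dt=0$ — the boundary terms vanish because $\Re\alpha,\Re\beta$ are large — and expanding the derivative, which acts on the prefactor $t_k$, on the factor $t_k^{\alpha-1}$, on the factor $(1-t_k)^{\beta-1}$ and on each $|t_k-t_j|^{2\gamma}$, then symmetrising over $k$ and over the remaining variables, one obtains after bookkeeping the recursion
$$\frac{S_n(\alpha+1,\beta,\gamma)}{S_n(\alpha,\beta,\gamma)}=\prod_{j=0}^{n-1}\frac{\alpha+j\gamma}{\alpha+\beta+(n+j-1)\gamma}.$$

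To finish, the right-hand side of \eqref{Selbergevaluated} satisfies the same recursion in $\alpha$ by $\Gamma(z+1)=z\Gamma(z)$, so the quotient $R(\alpha)$ of $S_n(\alpha,\beta,\gamma)$ by the claimed product is $1$-periodic in $\alpha$; since the $\alpha$-poles of the two sides match, $R$ is holomorphic in $\alpha$. Controlling $R$ on vertical strips — Stirling's expansion \eqref{eqn:stirling} for the Gamma-ratio together with an oscillatory-integral (Riemann–Lebesgue type) estimate showing that $S_n$ decays in $\Im\alpha$ at the same polynomial rate — shows that $R$ is bounded, hence constant: $S_n(\alpha,\beta,\gamma)=C_n(\gamma)\cdot(\text{r.h.s.\ of }\eqref{Selbergevaluated})$ with $C_n$ independent of $\alpha$ and, by the symmetry $S_n(\alpha,\beta,\gamma)=S_n(\beta,\alpha,\gamma)$ coming from $t_j\mapsto 1-t_j$, independent of $\beta$ as well. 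Finally $C_n(\gamma)=1$ by induction on $n$: analysing the integrand near the faces $\{t_j=0\}$ shows $S_n(\cdot,\beta,\gamma)$ has a simple pole at $\alpha=0$ with residue $n\,S_{n-1}(2\gamma,\beta,\gamma)$, and comparing this with the residue of $C_n(\gamma)\cdot(\text{r.h.s.\ of }\eqref{Selbergevaluated})$ (using the inductive hypothesis together with a short Gamma-function identity) forces $C_n(\gamma)=C_{n-1}(\gamma)=\cdots=C_1=1$.

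I expect two things to be the main obstacles. The first is the patient bookkeeping in the symmetrisation that produces Aomoto's recursion: everything is elementary, but one must keep careful track of the combinatorial factors coming from the $|t_k-t_j|^{2\gamma}$ terms. The second, more conceptual, is keeping the analytic-continuation and boundedness arguments honest — isolating explicit open regions in $(\alpha,\beta,\gamma)$ on which differentiation under the integral sign, the vanishing of the integration-by-parts boundary terms, the growth estimate for $S_n$ in $\Im\alpha$, and the residue computation at $\alpha=0$ are all legitimate, and then checking that both sides continue to the same meromorphic function on the domain in the statement with its constraint $\Re\gamma>-\min\{1/n,\Re\alpha/(n-1),\Re\beta/(n-1)\}$. (An alternative I would keep in reserve is Anderson's proof, which trades the integration by parts for an inductively established ``interlacing'' beta integral over $\{a_1<\lambda_1<a_2<\cdots<\lambda_n<a_{n+1}\}$ and iterates it to realise the Selberg density as a marginal; it too reduces, in the end, to \eqref{eqn:WGamma} plus analytic continuation in $\gamma$.)
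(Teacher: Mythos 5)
The paper does not actually prove Theorem~\ref{thm:Selberg}: after stating it, the authors simply cite Selberg's original paper \cite{Selberg} and refer the reader to \cite{Forrester} (and \cite{ForresterWarnaar}) for a proof. So there is no ``paper's route'' to compare against; the theorem is quoted as an external input. Your proposal supplies a genuine proof sketch where the paper has none, which is valuable, and what you sketch is a recognisable rendition of Aomoto's proof (as in Forrester--Warnaar or Andrews--Askey--Roy): the Euler beta case $n=1$, the integration-by-parts derivation of the first-order recursion in $\alpha$, the periodicity-plus-boundedness argument forcing the ratio to a $\gamma$-dependent constant, symmetry in $\alpha\leftrightarrow\beta$, and an induction on $n$ via the residue at $\alpha=0$ (your residue $n\,S_{n-1}(2\gamma,\beta,\gamma)$ is the right one, coming from the $n$ faces $\{t_j=0\}$ and the factor $\prod_{k\neq j}t_k^{2\gamma}$ surviving there). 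The two places you identify as the real work are indeed where all the substance lies.

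A few concrete cautions if you carry this out in full. In the integration by parts step, the standard Aomoto trick uses the combination $t_k(1-t_k)$ rather than $t_k$ alone precisely so that the boundary term at $t_k=1$ vanishes without assuming $\Re\beta$ large; your version works on a smaller open set and then continues, which is fine but adds one more continuation to justify. Differentiating $|t_k-t_j|^{2\gamma}$ produces the non-integrable-looking kernel $1/(t_k-t_j)$, so before symmetrising you either need $\Re\gamma>1/2$ or the partial-fractions cancellation $\frac{t_k}{t_k-t_j}+\frac{t_j}{t_j-t_k}=1$ applied inside the integral; state which. For the Liouville step, be explicit that $1$-periodicity plus boundedness on a closed vertical period strip gives boundedness on all of $\mathbb{C}$; the boundedness itself rests on matching the polynomial decay of $S_n$ in $\Im\alpha$ (set by the boundary/diagonal singularities, i.e.\ by $\Re\alpha,\Re\beta,\Re\gamma$) against the polynomial decay of the Gamma ratio $\Gamma(\alpha+j\gamma)/\Gamma(\alpha+\beta+(n+j-1)\gamma)\sim\alpha^{-\beta-(n-1)\gamma}$, and this needs a real estimate rather than just Riemann--Lebesgue. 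None of these are gaps in the strategy, but they are exactly why the full write-up of Aomoto's argument runs to several pages; for the purposes of this paper, invoking the theorem by reference, as the authors do, is the appropriate choice.
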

Selberg published his proof in Norwegian, \cite{Selberg}. To see the proof in English we refer the reader to \cite{Forrester}, and for an overview of the implications of the Selberg integral, \cite{ForresterWarnaar}. The following proposition is one such implication, which allows us to express the characteristic function of $\log|D_{N,\beta}(\theta)|$ in terms of the gamma function. The following observation was actually first published in \cite{Morris}.
\begin{prop}\label{prop:Mexpression}
Let
\begin{equation}
M_n(a,b,\gamma)=\frac{1}{(2\pi)^n}\int_{-\pi}^{\pi}...\int_{-\pi}^{\pi}\prod_{j=1}^{n}e^{\frac{1}{2}i\theta_j(a-b)}\left|1+e^{i\theta_j}\right|^{a+b}\prod_{1\leq j<k\leq n}\left|e^{i\theta_j}-e^{i\theta_k}\right|^{2\gamma}d\theta_1...d\theta_n,
\end{equation}
where $Re(a+b+1)>0$, $Re(\gamma)>-\min\{1/n,Re(a+b+1)/(n-1)\}$. Then,
\begin{equation}
M_n(a,b,\gamma)=\prod_{j=0}^{n-1}\frac{\Gamma(1+a+b+j\gamma)\Gamma(1+(j+1)\gamma)}{\Gamma(1+a+j\gamma)\Gamma(1+b+j\gamma)\Gamma(1+\gamma)}.
\end{equation}
\end{prop}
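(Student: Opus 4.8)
The plan is to reduce $M_n(a,b,\gamma)$ to the Selberg integral $S_n$ of Theorem~\ref{thm:Selberg} by a change of variables that maps the unit circle to the interval $[0,1]$. First I would use the trigonometric substitution $t_j = \cos^2(\theta_j/2) = \tfrac{1}{2}(1+\cos\theta_j)$ (or equivalently $\theta_j \mapsto$ the angle with $1+e^{i\theta_j} = 2\cos(\theta_j/2)\,e^{i\theta_j/2}$), exploiting the fact that the integrand of $M_n$ is even in each $\theta_j$ so that the integral over $[-\pi,\pi]$ equals twice the integral over $[0,\pi]$. Under this map one computes $|1+e^{i\theta_j}| = 2\sqrt{t_j}$, $e^{\frac12 i\theta_j(a-b)}$ — after symmetrizing the even integrand, only the real part $\cos(\tfrac12\theta_j(a-b))$ survives, but it is cleaner to write $|1+e^{i\theta_j}|^{a+b} e^{\frac12 i \theta_j (a-b)}$ as $|1+e^{i\theta_j}|^{2a}$ times a phase, or simply to recall Morris's identity in the form that after symmetrization the relevant factor becomes $t_j^{a}(1-t_j)^{b}$ up to the constant powers of $2$. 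The Vandermonde factor transforms as $|e^{i\theta_j}-e^{i\theta_k}|^{2\gamma} = (2)^{2\gamma}|t_j - t_k|^{2\gamma}\,(\ldots)$ after the half-angle identity $e^{i\theta_j}-e^{i\theta_k} = 2i \sin(\tfrac{\theta_j-\theta_k}{2}) e^{i(\theta_j+\theta_k)/2}$ combined with the sum-to-product relation $\cos\theta_j - \cos\theta_k = -2\sin(\tfrac{\theta_j+\theta_k}{2})\sin(\tfrac{\theta_j-\theta_k}{2})$; care is needed to track how $|t_j-t_k|$ relates to the product of sines. The Jacobian $d\theta_j = \mathrm{d}t_j / \sqrt{t_j(1-t_j)}$ supplies the remaining $t_j^{-1/2}(1-t_j)^{-1/2}$, shifting the exponents by $1/2$ each, and the $2^n$ from the even-symmetry doubling cancels against negative powers of $2$.

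After the dust settles I expect to land on an identity of the shape
\[
M_n(a,b,\gamma) = C_n(a,b,\gamma) \cdot S_n\!\left(a+\tfrac12,\, b+\tfrac12,\, \gamma\right)
\]
for an explicit constant $C_n$ that is a product of powers of $2$ (the powers coming from $|1+e^{i\theta}| = 2\sqrt{t}$, from the Vandermonde, and from the Jacobian should all combine, and in fact one should check that $C_n$ reduces to exactly the normalization needed). The parameters fed into Selberg are $\alpha = a+\tfrac12$, $\beta = b+\tfrac12$, which is why the hypothesis $\mathrm{Re}(a+b+1)>0$ appears — it is precisely $\mathrm{Re}(\alpha)+\mathrm{Re}(\beta)>0$ — and the constraint $\mathrm{Re}(\gamma) > -\min\{1/n, \mathrm{Re}(a+b+1)/(n-1)\}$ matches Selberg's constraint once one notes $\min\{1/n, \mathrm{Re}(\alpha)/(n-1), \mathrm{Re}(\beta)/(n-1)\}$ can be replaced by $\min\{1/n, (\mathrm{Re}(\alpha)+\mathrm{Re}(\beta))/(2(n-1))\}$ under symmetry, though one must be slightly careful here. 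Then I would substitute the Selberg product \eqref{Selbergevaluated} and simplify: the $\Gamma(\alpha + j\gamma) = \Gamma(a+\tfrac12 + j\gamma)$ and $\Gamma(\beta+j\gamma)$ terms, together with $\Gamma(1+(j+1)\gamma)$, $\Gamma(\alpha+\beta+(n+j-1)\gamma) = \Gamma(a+b+1+(n+j-1)\gamma)$, and $\Gamma(1+\gamma)$, should collapse — after a reindexing of one of the telescoping-style products and use of the duplication or reflection formula for $\Gamma$ where half-integer shifts appear — into the claimed right-hand side $\prod_{j=0}^{n-1}\frac{\Gamma(1+a+b+j\gamma)\Gamma(1+(j+1)\gamma)}{\Gamma(1+a+j\gamma)\Gamma(1+b+j\gamma)\Gamma(1+\gamma)}$.

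Finally, since both sides are analytic in $(a,b,\gamma)$ on the domain specified (the integral $M_n$ converges absolutely and depends holomorphically on the parameters there, and the right-hand side is meromorphic with the stated poles avoided), it suffices to establish the identity on a real subdomain where all the manipulations above are manifestly justified — e.g. $a,b>0$ real and $\gamma>0$ — and then extend by analytic continuation.

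The main obstacle I anticipate is purely bookkeeping but genuinely delicate: correctly tracking the constant $C_n$ (all those factors of $2$ and $i$, the sign inside $|\cos\theta_j - \cos\theta_k|$, and the half-angle rewriting of the Vandermonde), and then getting the Selberg product to simplify cleanly — in particular the appearance of half-integer arguments $a+\tfrac12+j\gamma$ forces an application of the Legendre duplication formula, and one has to make sure the resulting product of $\Gamma$'s at doubled arguments telescopes against the denominator $\Gamma(\alpha+\beta+(n+j-1)\gamma)$ terms to produce exactly the stated answer with no leftover constant. (Since this is a known result of Morris, I would cross-check the final constant against \cite{Morris} or \cite{ForresterWarnaar}.) None of the individual steps is deep; the risk is an arithmetic slip in the exponent accounting.
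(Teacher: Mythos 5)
The paper does not actually supply a proof of this proposition; it simply refers to Section 3.9 of \cite{Forrester} and to \cite{Morris}. Your attempt to give one is welcome, but the proposed substitution $t_j = \cos^2(\theta_j/2)$ cannot reduce Morris's integral to Selberg's, and the obstruction is structural rather than a bookkeeping slip. First, the integrand of $M_n$ is not even in each $\theta_j$ individually: the factor $e^{\frac12 i\theta_j(a-b)}$ and, more seriously, each factor $|e^{i\theta_j}-e^{i\theta_k}|^{2\gamma}$ with $k\neq j$ is altered under $\theta_j\mapsto-\theta_j$; the integrand is invariant only under the \emph{simultaneous} sign flip of all coordinates, which does not let you fold the domain to $[0,\pi]^n$. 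Second --- and this is fatal to the plan even if evenness held --- the Vandermonde factors do not correspond. From $t_j = \tfrac12(1+\cos\theta_j)$ one gets $t_j-t_k = -\sin\frac{\theta_j+\theta_k}{2}\sin\frac{\theta_j-\theta_k}{2}$, while $|e^{i\theta_j}-e^{i\theta_k}| = 2\,|\sin\frac{\theta_j-\theta_k}{2}|$. Thus $|t_j-t_k|^{2\gamma}$ carries an extra factor $|\sin\frac{\theta_j+\theta_k}{2}|^{2\gamma}$ that has no counterpart in the Morris integrand, and no accounting of powers of two will produce it. (The cosine map does relate the Jacobi $\beta$-ensemble on $n$ points to a $2n$-point circular configuration symmetric under $\theta\mapsto-\theta$, because that doubled configuration supplies precisely these missing $\sin\frac{\theta_j+\theta_k}{2}$ factors; the Morris integral has no such doubling.)

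The standard route from Selberg to Morris is genuinely different and uses complex contours rather than a real change of variables: one deforms the Selberg contours on $[0,1]$ into the complex plane to obtain a Cauchy-type integral over $\mathbb{R}^n$ with weight proportional to $\prod_j (1+ix_j)^{-b}(1-ix_j)^{-a}$, and then applies the Cayley transform $e^{i\theta} = (1+ix)/(1-ix)$; under this map one has $e^{i\theta_j}-e^{i\theta_k} = 2i(x_j-x_k)/[(1-ix_j)(1-ix_k)]$, so each factor $|x_j-x_k|$ turns into a \emph{single} factor $|e^{i\theta_j}-e^{i\theta_k}|$, exactly matching Morris. Your closing remark about analytic continuation in the parameters is sound, but it cannot repair a change of variables that fails already at the level of the Vandermonde.
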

\begin{proof}
See the discussion in Section 3.9 in \cite{Forrester}, or \cite{Morris}.
\end{proof}
Having introduced the last proposition, we show how it implies various identities for the characteristic function of $\log\left|D_{N,\beta}(\theta)\right|.$
\subsection{Implications of the Selberg integral}
Using the Selberg integral we can compute the characteristic function of $\log\left|D_{N,\beta}(\theta)\right|$ in terms of the gamma function. 
\begin{lem}\label{lemma:charfngammafn}
The characteristic function of $\log\left|D_{N,\beta}(\theta)\right|$ with respect to the Circular $\beta$-Ensemble is
\begin{equation}\label{eqn:charfn}
\psi_{N,\beta}(t)=\prod_{j=0}^{N-1}\frac{\Gamma\left(1+it+j\frac{\beta}{2}\right)\Gamma\left(1+j\frac{\beta}{2}\right)}{\Gamma\left(1+it/2+j\frac{\beta}{2}\right)^2}.
\end{equation}
\end{lem}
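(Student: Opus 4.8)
\noindent The plan is to reduce the identity to a direct application of Proposition~\ref{prop:Mexpression}. First I would use the translation invariance of \eqref{eq:betaEnsemble} mentioned in the introduction to set $\theta=0$, so that
\[
\psi_{N,\beta}(t)=\mathbb E\!\left[\prod_{j=1}^{N}\left|1-e^{i\theta_j}\right|^{it}\right]=\frac{1}{Z_N}\int_{[0,2\pi)^N}\prod_{j=1}^{N}\left|1-e^{i\theta_j}\right|^{it}\prod_{1\le j<k\le N}\left|e^{i\theta_j}-e^{i\theta_k}\right|^{\beta}\,d\theta_1\cdots d\theta_N,
\]
where $Z_N=\int_{[0,2\pi)^N}\prod_{j<k}|e^{i\theta_j}-e^{i\theta_k}|^{\beta}\,d\theta$ is the normalizing constant of the C$\beta$E. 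The integrals converge absolutely since $|1-e^{i\theta_j}|^{it}$ has modulus one; moreover $\{D_{N,\beta}(0)=0\}$ has measure zero, so $\log|D_{N,\beta}(0)|$ is a.s.\ finite and $\psi_{N,\beta}$ is well defined.

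Next I would apply the measure-preserving change of variables $\theta_j\mapsto\theta_j+\pi\pmod{2\pi}$, which leaves $\prod_{j<k}|e^{i\theta_j}-e^{i\theta_k}|^{\beta}$ unchanged and turns $|1-e^{i\theta_j}|$ into $|1+e^{i\theta_j}|$; by $2\pi$-periodicity the integral over $[0,2\pi)^N$ equals the one over $[-\pi,\pi)^N$ appearing in $M_n$. Comparing with the definition of $M_n(a,b,\gamma)$ I would choose $a=b=it/2$ and $\gamma=\beta/2$: then $a-b=0$ removes the exponential prefactor, $|1+e^{i\theta_j}|^{a+b}=|1+e^{i\theta_j}|^{it}$, and $2\gamma=\beta$. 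This yields
\[
\int_{[0,2\pi)^N}\prod_{j=1}^{N}\left|1+e^{i\theta_j}\right|^{it}\prod_{j<k}\left|e^{i\theta_j}-e^{i\theta_k}\right|^{\beta}\,d\theta=(2\pi)^N\,M_N\!\left(\tfrac{it}{2},\tfrac{it}{2},\tfrac{\beta}{2}\right),
\]
and the special case $t=0$ identifies $Z_N=(2\pi)^N M_N(0,0,\tfrac{\beta}{2})$, hence $\psi_{N,\beta}(t)=M_N(\tfrac{it}{2},\tfrac{it}{2},\tfrac{\beta}{2})/M_N(0,0,\tfrac{\beta}{2})$. The hypotheses of Proposition~\ref{prop:Mexpression} hold because $\Re(a+b+1)=1>0$ and $\gamma=\beta/2>0$.

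Finally I would substitute the product formula of Proposition~\ref{prop:Mexpression}. The numerator becomes $\prod_{j=0}^{N-1}\Gamma(1+it+j\tfrac{\beta}{2})\Gamma(1+(j+1)\tfrac{\beta}{2})\big/\!\left(\Gamma(1+\tfrac{it}{2}+j\tfrac{\beta}{2})^{2}\Gamma(1+\tfrac{\beta}{2})\right)$, while setting $a=b=0$ collapses the denominator to $\prod_{j=0}^{N-1}\Gamma(1+(j+1)\tfrac{\beta}{2})\big/\!\left(\Gamma(1+j\tfrac{\beta}{2})\Gamma(1+\tfrac{\beta}{2})\right)$. Dividing, the factors $\Gamma(1+(j+1)\tfrac{\beta}{2})$ and $\Gamma(1+\tfrac{\beta}{2})$ cancel and one is left with $\prod_{j=0}^{N-1}\Gamma(1+it+j\tfrac{\beta}{2})\Gamma(1+j\tfrac{\beta}{2})\big/\Gamma(1+\tfrac{it}{2}+j\tfrac{\beta}{2})^{2}$, which is exactly \eqref{eqn:charfn}.

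There is no genuine obstacle: all the substance is contained in Proposition~\ref{prop:Mexpression}, hence in the Selberg integral. The only points requiring care are the two symmetry reductions (translating to $\theta=0$ and shifting each $\theta_j$ by $\pi$) needed to match the C$\beta$E average to Morris's integral $M_n$, recognizing $Z_N$ as its $t=0$ specialization, and the elementary cancellation of gamma factors in the ratio.
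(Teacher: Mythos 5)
Your proposal is correct and follows essentially the same route as the paper: reduce to Morris's integral $M_N$ via rotation invariance, evaluate $\psi_{N,\beta}(t)=M_N(it/2,it/2,\beta/2)/M_N(0,0,\beta/2)$ using Proposition~\ref{prop:Mexpression}, and cancel the common gamma factors. The only cosmetic difference is that you set $\theta=0$ and then shift each $\theta_j$ by $\pi$, whereas the paper sets $\theta=\pi$ directly; these are the same use of the circular symmetry.
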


\begin{proof}
The characteristic function of $\log|D_{N,\beta}(\theta)|$ is
\begin{multline}
\psi_{N,\beta}(t)=\mathbb{E}\left[e^{it\log\left|D_{N,\beta}(\theta)\right|}\right]=\mathbb{E}\left[\left|D_{N,\beta}(\theta)\right|^{it}\right]\\
=\frac{1}{Z_N}\int_{0}^{2\pi}...\int_{0}^{2\pi}\prod_{j=1}^N \left|e^{i\theta_j}-e^{i\theta}\right|^{it}\left|e^{i\theta_j}-e^{i\theta_k}\right|^\beta\prod_{j=1}^{N}d\theta_j,
\end{multline}
where $Z_N$ is a normalising constant.
Because of the rotation invariance of the measure, there is no dependence on $\theta$ of the above integral. We can thus set $\theta=\pi$ and obtain
\begin{equation}
\psi_{N,\beta}(t)=\int_{-\pi}^{\pi}...\int_{-\pi}^{\pi}\frac{1}{(2\pi)^NC_N(\beta/2)}\prod_{j=1}^N \left|1+e^{i\theta_j}\right|^{it}\prod_{1\leq j<k\leq N}\left|e^{i\theta_j}-e^{i\theta_k}\right|^\beta\prod_{j=1}^{N}d\theta_j.
\end{equation}
We use $M_N(it/2,it/2,\beta/2)$ to evaluate the integral, and $M_{N}(0,0,\beta/2)$ to obtain the normalizing constant. Combining the two results, we obtain:
$$\psi_{N,\beta}(t)=\frac{\Gamma\left(1+\frac{\beta}{2}\right)^N}{\Gamma\left(1+N\frac{\beta}{2}\right)}\prod_{j=0}^{N-1}\frac{\Gamma\left(1+it+j\frac{\beta}{2}\right)\Gamma\left(1+(j+1)\frac{\beta}{2}\right)}{\Gamma\left(1+\frac{it}{2}+j\frac{\beta}{2}\right)^2\Gamma\left(1+\frac{\beta}{2}\right)},$$
and this gives us the result, when rearranged.
\end{proof}
Lemma \ref{lemma:charfngammafn} can be found in \cite{Su}. There is an alternative proof without relying on the Selberg integral \cite{Bourgade,BHNY}. Indeed, the joint probability distribution function  \eqref{eq:betaEnsemble}   can be obtained as the eigenvalue distribution of a unitary matrix constructed out of random Verblunsky coefficients \cite{KillipNenciu}. Based on this identification one can write $D_{N,\beta}(0)$ as the product of independent (but not identically distributed!) random variables. This also explains the product structure on the right-hand side of \eqref{eqn:charfn}.

 The following straightforward corollary of Lemma \ref{lemma:charfngammafn} yields an expression for the characteristic function, which is suited for the analysis of the characteristic function for $|t|>\sqrt{\log N}$, that is beyond the standard deviation.

\begin{lem}\label{lemma:family}
Let $\psi_{N,\beta}$ be the characteristic function of $Y_N=\log|D_{N,\beta}(\theta)|$ w.r.t. the Circular $\beta$-Ensemble, then:
\begin{equation}\label{eqn:family}
\psi_{N,\beta}(t)=\prod_{n=1}^{\infty}\prod_{j=0}^{N-1}\frac{(n+j\frac{\beta}{2}+\frac{i}{2}t)^2}{(n+j\frac{\beta}{2}+it)(n+j\frac{\beta}{2})}=\frac{\Gamma(1+it)}{\Gamma(1+it/2)^2}\prod_{n=1}^{\infty}\prod_{j=1}^{N-1}\frac{\left(1+\frac{i}{2}\frac{t}{n+\frac{j\beta}{2}}\right)^2}{1+i\frac{t}{n+\frac{j\beta}{2}}}.
\end{equation}
\end{lem}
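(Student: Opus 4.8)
The plan is to start from the gamma-function representation \eqref{eqn:charfn} in Lemma \ref{lemma:charfngammafn} and expand every gamma factor via the Weierstrass product \eqref{eqn:WGamma}. Fix $a$ with $\Re a>0$ and consider the building block $\Gamma(a+it)\Gamma(a)/\Gamma(a+it/2)^2$, where $t\in\mathbb R$, so that no poles of the gamma functions are met. Inserting \eqref{eqn:WGamma} for each of $\Gamma(a+it)$, $\Gamma(a)$ and $\Gamma(a+it/2)^2$, the Euler--Mascheroni prefactors $e^{-\gamma z}$ cancel because $(a+it)+a-2(a+it/2)=0$, and for the same reason the convergence factors $e^{z/n}$ inside the products cancel termwise. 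What remains, after also absorbing the $1/z$ prefactors into an $n=0$ term, is
\[
\frac{\Gamma(a+it)\,\Gamma(a)}{\Gamma(a+it/2)^2}=\prod_{n=0}^{\infty}\frac{(n+a+\tfrac i2 t)^2}{(n+a+it)(n+a)}.
\]
Taking $a=1+j\tfrac\beta2$, reindexing $n\mapsto n-1$, and multiplying over $j=0,\dots,N-1$ yields the first identity in the lemma. For the second identity I would split off the $j=0$ factor, which by the display above with $a=1$ (and $\Gamma(1)=1$) equals $\Gamma(1+it)/\Gamma(1+it/2)^2$, and in each remaining term ($j\ge1$) divide numerator and denominator by $(n+j\tfrac\beta2)^2$ to rewrite $\frac{(n+j\frac\beta2+\frac i2 t)^2}{(n+j\frac\beta2+it)(n+j\frac\beta2)}$ as $\frac{(1+\frac i2\frac{t}{n+j\beta/2})^2}{1+i\frac{t}{n+j\beta/2}}$.

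The one step that deserves genuine care — and the main obstacle — is the convergence of the infinite products together with the interchange of their order. For fixed $a$, expanding the general term of $\prod_{n\ge1}$ gives $1+c/n+O(n^{-2})$ with $c=2(a+\tfrac i2 t)-(2a+it)=0$, so each factor is in fact $1+O(n^{-2})$, with the implied constant uniform over the finite range $0\le j\le N-1$; hence every inner product over $n$ converges absolutely. Since the outer product over $j$ has only finitely many terms, taking logarithms turns both sides into an absolutely convergent double series, which simultaneously legitimizes the cancellations above, the reindexing, the interchange $\prod_{j}\prod_{n}=\prod_{n}\prod_{j}$ in the first identity, and the regrouping into the second form. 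No analytic continuation argument is needed, since \eqref{eqn:charfn} already holds for all $t\in\mathbb R$ and the manipulations preserve this.

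The only subtlety to keep an eye on is the bookkeeping of the $n=0$ boundary term and the verification that all the spurious exponential factors genuinely disappear; this happens precisely because of the relation $2\cdot\tfrac i2 t=it$ between the two shifts $it/2$ and $it$, which is what makes the linear ($1/n$) term in each factor vanish and thereby gives both absolute convergence and the clean product structure recorded in \eqref{eqn:family}.
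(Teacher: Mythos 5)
Your proposal is correct and takes essentially the same route as the paper: both expand every gamma factor in \eqref{eqn:charfn} via the Weierstrass product \eqref{eqn:WGamma}, use the cancellation $(a+it)+a-2(a+it/2)=0$ to kill the $e^{-\gamma z}$ and $e^{z/n}$ factors, and reassemble into the product over $n$ and $j$. You supply more detail than the paper's terse three-sentence argument (in particular the $n=0$ bookkeeping and the absolute-convergence justification for interchanging products), but the underlying method is the same.
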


\begin{proof}
We apply \eqref{eqn:WGamma} to the right-hand side of \eqref{eqn:charfn}. We recall that $\Gamma(z+1)=z\Gamma(z)$ together with \eqref{eqn:WGamma} to deduce that 
$$
    \frac{\Gamma(1+it+j\delta)\Gamma(1+j\delta)}{\Gamma(1+\frac{it}{2}+j\delta)^2}=\prod_{n=1}^{\infty}\frac{(n+it/2+j\delta)^2}{(n+it+j\delta)(n+j\delta)}$$ 
for $\delta\geq0$. The result follows.
\end{proof}
Lemma \ref{lemma:family} is stated in \cite{DalBorgo} in their discussion about rational $\beta$.  It gives rise to infinite series expressions for the cumulants, which we will use in the next subsection. We also use it to bound concretely every term in the product in order to give an upper bound for $|\psi_{N,\beta}(t)|$ for $|t|<N^{5/7}\frac{\beta}{2}$.

\subsection{Computing the cumulants and a CLT} \label{sec:CLTweak}
We proceed to compute the cumulants of  $\log |D_{N,\beta}(\theta)|$ using Lemma \ref{lemma:family}, analogous to  \cite{KeatingSnaith} in the case of the CUE. We also bound the absolute value of the third derivative of $\log \psi_{N,\beta}(t)$. We show that these results give rise to weak convergence of $\frac{\log|D_{N,\beta}(\theta)|}{\sqrt{\frac{1}{\beta}\log(N)}}$ to $\mathcal{N}(0,1)$. 
\begin{lem}\label{lemma:cumulantcalculation}
Let $D_{N,\beta}(\theta)$ be the characteristic polynomial of the C${\beta}$E. Then:
$$\mathbb{E}\left(\log |D_{N,\beta}(\theta)|\right)=0,$$ 
\begin{equation} \label{eqn:varianceseries}
\var\left(\log|D_{N,\beta}(\theta)|\right)=\frac{1}{2}\sum_{j=0}^{N-1}\sum_{n=1}^{\infty}\frac{1}{\left(n+\frac{j\beta}{2}\right)^2},
\end{equation}
and for $k\geq 3$, the $k$-th  cumulant $C_k^{(N)}$,
\begin{equation}\label{eqn:cumulants}
C_k^{(N)}=(-1)^k\frac{2^{k-1}-1}{2^{k-1}}(k-1)!\sum_{j=0}^{N-1}\sum_{n=1}^{\infty}\frac{1}{\left(n+j\frac{\beta}{2}\right)^{k}}.
\end{equation}
Furthermore,
\begin{equation}\label{eqn:near0}
\left|\frac{\partial^3}{\partial t^3}\log(\psi_{N,\beta}(t))\right|\leq\frac{5}{2}\sum_{j=0}^{N-1}\sum_{n=1}^{\infty}\frac{1}{\left(n+\frac{j\beta}{2}\right)^3}<\frac{5}{\beta}+\frac{15}{4}, \forall t.
\end{equation}
\end{lem}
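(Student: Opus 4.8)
The plan is to work directly from the product representation in Lemma \ref{lemma:family}, taking logarithms to convert the product into a sum and then differentiating term by term. Writing $\psi_{N,\beta}(t)=\prod_{n=1}^\infty\prod_{j=0}^{N-1}g_{n,j}(t)$ with $g_{n,j}(t)=\frac{(n+j\beta/2+it/2)^2}{(n+j\beta/2+it)(n+j\beta/2)}$, we get $\log\psi_{N,\beta}(t)=\sum_{n,j}\log g_{n,j}(t)$, and hence $\log\psi_{N,\beta}(t)=\sum_{n,j}\bigl(2\log(a+it/2)-\log(a+it)-\log a\bigr)$ where $a=a_{n,j}=n+j\beta/2>0$. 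The first and second derivatives in $t$ give $\mathbb E(\log|D_{N,\beta}|)=-i\,\partial_t\log\psi|_{t=0}$ and $\var(\log|D_{N,\beta}|)=-\partial_t^2\log\psi|_{t=0}$. Computing $\partial_t\log(a+c\,it)=\frac{c\,i}{a+c\,it}$, one finds $\partial_t\log g_{n,j}(t)\big|_{t=0}=\frac{i}{a}-\frac{i}{a}=0$, which immediately yields the vanishing of the mean, and the analogous second-derivative computation $\partial_t^2\log(a+c\,it)=\frac{c^2}{(a+c\,it)^2}$ gives $\partial_t^2\log g_{n,j}(0)=2\cdot(-\tfrac14)\cdot\frac{1}{a^2}-(-1)\cdot\frac{1}{a^2}=\frac{1}{2a^2}$, so $\var=\sum_{n,j}\frac{1}{2a_{n,j}^2}$, which is \eqref{eqn:varianceseries}. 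For the higher cumulants one iterates: $\partial_t^k\log(a+c\,it)=(-1)^{k-1}(k-1)!\frac{(c\,i)^k}{(a+c\,it)^k}$, so at $t=0$ the contribution of $g_{n,j}$ to the $k$-th derivative is $(-1)^{k-1}(k-1)!\,i^k\bigl(2(1/2)^k-1\bigr)a^{-k}$; multiplying by $i^{-k}$ (since $C_k^{(N)}=i^{-k}\partial_t^k\log\psi|_{t=0}$) and simplifying $2\cdot 2^{-k}-1=-(2^{k-1}-1)/2^{k-1}$, and collecting signs to $(-1)^{k-1}\cdot(-1)=(-1)^k$, produces exactly \eqref{eqn:cumulants}.

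For the uniform bound on the third derivative \eqref{eqn:near0}, the plan is to bound $\bigl|\partial_t^3\log g_{n,j}(t)\bigr|$ pointwise in $t$ by something summable. From $\partial_t^3\log(a+c\,it)=2\frac{(c\,i)^3}{(a+c\,it)^3}$ we get $\partial_t^3\log g_{n,j}(t)=2\bigl(2(i/2)^3(a+it/2)^{-3}-i^3(a+it)^{-3}\bigr)=-2i\bigl(\tfrac14(a+it/2)^{-3}-(a+it)^{-3}\bigr)$ (being careful to collect $(i/2)^3=-i/8$ and $i^3=-i$). Taking absolute values and using $|a+c\,it|\geq a$ for real $t$ and $a,c>0$, we bound $\bigl|\partial_t^3\log g_{n,j}(t)\bigr|\le 2\bigl(\tfrac14\cdot\tfrac14 + 1\bigr)a^{-3}=\frac{17}{8}a^{-3}$; to reach the stated constant $\tfrac52$ one must be slightly more careful, noting that the two terms $\tfrac14(a+it/2)^{-3}$ and $(a+it)^{-3}$ have arguments of the same sign and so do not add in the worst case — a cleaner route is to write $\partial_t^3\log g_{n,j}$ back in the form $2(c\,i/(a+c\,it))^3$-type terms and bound $|1/(a+it/2)^3|\le 1/a^3$ and $|1/(a+it)^3|\le 1/a^3$, giving the factor $2(\tfrac14 + 1)/a^3 = \tfrac{5}{2a^3}$ once the $\tfrac14$ from $(1/2)^3\cdot 2$ and the cube are combined correctly. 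Summing over $n,j$ gives the first inequality $\frac52\sum_{j=0}^{N-1}\sum_{n=1}^\infty a_{n,j}^{-3}$.

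Finally, for the explicit numerical bound $\frac52\sum_{j,n}a_{n,j}^{-3}<\frac5\beta+\frac{15}{4}$, the plan is to split off the $j=0$ term, which contributes $\frac52\sum_{n=1}^\infty n^{-3}=\frac52\zeta(3)<\frac52\cdot\frac32=\frac{15}{4}$ (using $\zeta(3)<3/2$), and to bound the remaining $j\ge 1$ terms by comparison with an integral: $\sum_{n=1}^\infty(n+j\beta/2)^{-3}\le \int_0^\infty(x+j\beta/2)^{-3}\,dx=\frac{1}{2}(j\beta/2)^{-2}=\frac{2}{\beta^2 j^2}$, so $\frac52\sum_{j=1}^{N-1}\sum_n a_{n,j}^{-3}\le \frac52\cdot\frac{2}{\beta^2}\sum_{j=1}^\infty j^{-2}=\frac{5}{\beta^2}\cdot\frac{\pi^2}{6}<\frac{5}{\beta^2}\cdot 2 = \frac{10}{\beta^2}$, which is not quite the claimed $\frac5\beta$; a sharper integral comparison — e.g. bounding $\sum_{n\ge 1}(n+j\beta/2)^{-3}$ by $\int_{1/2}^\infty(x+j\beta/2)^{-3}\,dx$ or combining the double sum more efficiently over both indices — should recover the factor $\frac5\beta$, and reconciling this arithmetic is the main place where care is needed. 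The conceptual content is entirely routine differentiation of logarithms and a convergent-series estimate; the \emph{main obstacle} is purely bookkeeping — tracking the powers of $i$ and the $(1/2)^k$ factors correctly through the cumulant formula, and choosing the integral comparison sharp enough to land on the stated constants $\tfrac52$, $\tfrac5\beta$ and $\tfrac{15}{4}$ rather than merely some finite bound.
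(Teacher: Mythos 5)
Your proof is essentially the paper's proof: take logs of the infinite product from Lemma~\ref{lemma:family}, differentiate termwise, evaluate at $0$ for the cumulants, and bound the third derivative termwise by $\tfrac{5}{2}a_{n,j}^{-3}$. The cumulant bookkeeping and the constant $\tfrac{5}{2}$ all come out right (your first pass giving $\tfrac{17}{8}$ was a slip which you correctly repair; there is also a compensating pair of sign errors in the $k=2$ computation — $\partial_t^2\log g_{n,j}(0)=-\tfrac{1}{2}a^{-2}$, not $+\tfrac{1}{2}a^{-2}$ — but it cancels against the missing factor $-1$ from $\var=-\partial_t^2\log\psi|_0$, so the final formula is still correct). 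You also do not justify the termwise differentiation; the paper does this by noting uniform convergence of the partial sums of holomorphic functions on $D(0,1/2)$.

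The one genuine gap is the final numerical bound $\tfrac{5}{2}\sum_{j\geq 1}\sum_n (n+j\beta/2)^{-3}<\tfrac{5}{\beta}$. You compare the $n$-sum to an integral at fixed $j$, getting $\int_0^\infty (x+j\beta/2)^{-3}\,dx=\tfrac{2}{\beta^2 j^2}$ and hence an overall bound of order $1/\beta^2$, which does not imply $\tfrac{5}{\beta}$ for $\beta<2$ and is in fact the wrong power of $\beta$ for small $\beta$ (the true double sum is $\Theta(1/\beta)$). The paper compares the $j$-sum to an integral at fixed $n$: $\sum_{j\geq 1}(n+j\beta/2)^{-3}\leq \int_0^\infty(n+x\beta/2)^{-3}\,dx=\tfrac{1}{\beta n^2}$, then sums over $n$ to get $\tfrac{\pi^2}{6\beta}$ and hence $\tfrac{5\pi^2}{12\beta}<\tfrac{5}{\beta}$. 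The direction of the comparison matters: $j$ enters the summand multiplied by $\beta/2$, so $\sum_{j}$ is the Riemann sum with $\beta$-dependent step and replacing it by an $x$-integral produces exactly one Jacobian factor $2/\beta$; replacing $\sum_n$ produces no Jacobian, and the only $\beta$-dependence surviving is $(j\beta/2)^{-2}$, which is why you land on $1/\beta^2$. Your suggested patch of $\int_{1/2}^\infty$ does not directly fix this — it gives $\tfrac{1}{2}(1/2+j\beta/2)^{-2}$, which still needs a second integral comparison over $j$ before a $1/\beta$ appears — so while the claim could be rescued along those lines, it is strictly more work than the paper's one-line comparison in the $j$-direction.
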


\begin{rmk}
The expression for the cumulants, \eqref{eqn:cumulants}, can also be rewritten in terms of special functions such as the polygamma function, $\psi^{(m)}(z)=\frac{d^{m+1}}{dz^{m+1}}\log\Gamma(z)$, or the Hurwitz zeta function, $\zeta(l,s)=\sum_{n=1}^{\infty}\frac{1}{(n+s)^l}$,
$$C_k^{(N)}=\frac{2^{k-1}-1}{2^{k-1}}\sum_{j=0}^{N-1}\psi^{(k-1)}\left(1+j\frac{\beta}{2}\right)=(-1)^k\frac{2^{k-1}-1}{2^{k-1}}(k-1)!\sum_{j=0}^{N-1}\zeta\left(k,j\frac{\beta}{2}\right),$$
the first equality giving an analogue of the expression that Keating and Snaith have in \cite{KeatingSnaith}.
\end{rmk}
\begin{proof}
Using \eqref{eqn:charfn}, we see that 
$$
    \log(\psi_{N,\beta}(t))
        =\sum_{j=0}^{N-1}
   \left(\log\Gamma\left(1+it+j\frac{\beta}{2}\right)+\log\Gamma\left(1+j\frac{\beta}{2}\right)-2\log\Gamma\left(1+i \frac{t}{2}+j\frac{\beta}{2}\right)\right).$$
   We can then write the derivatives of the cumulant generating function in terms of the polygamma function  $\psi^{(m)}(z)=\frac{d^{m+1}}{dz^{m+1}}\log\Gamma(z)$, which is meromorphic  with poles at $\mathbb{Z}_{\leq 0}$,
$$   
\frac{\partial^k}{\partial t^k}\log(\psi_{N,\beta})(t)
=\sum_{j=0}^{N-1}i^k\left(\psi^{(k-1)}\left(1+it+j\frac{\beta}{2}\right)-\left(\frac{1}{2}\right)^{k-1}\psi^{(k-1)}\left(1+i\frac{t}{2}+j\frac{\beta}{2}\right)\right).
$$
Utilising the  expansion of the polygamma function,
$$\psi^{(m)}(z)=(-1)^{m+1}m!\sum_{k=0}^{\infty}\frac{1}{(z+k)^{m+1}}\text{ for } m\geq 1,$$ 
we can rewrite the $k$-th derivative of the cumulant generating function as
\begin{equation}\label{eqn:kthcumulantderivative}
\frac{\partial^k}{\partial t^k}\log(\psi_{N,\beta})(t)=(k-1)!\sum_{j=0}^{N-1}\sum_{n=1}^{\infty}\left(\frac{-i}{n+\frac{j\beta}{2}}\right)^k\left(\frac{1}{\left(1+i\frac{t}{n+\frac{j\beta}{2}}\right)^k}-\frac{1}{2^{k-1}}\frac{1}{\left(1+\frac{i}{2}\frac{t}{n+\frac{j\beta}{2}}\right)^k}\right). 
\end{equation}
Evaluating the first and second derivative at $0$ we obtain the expectation and variance. The higher derivatives evaluated at $0$ give the higher cumulants. To deduce \eqref{eqn:near0} we bound the expression for the third derivative from \eqref{eqn:kthcumulantderivative},
\begin{multline*}
    \left|\frac{\partial^3}{\partial t^3}\log(\psi_{N,\beta})(t)\right|\leq 2\sum_{n=1}^{\infty}\sum_{j=0}^{N-1}\left(\frac{1}{n+\frac{j\beta}{2}}\right)^3\left|\frac{1}{\left(1+i\frac{t}{n+\frac{j\beta}{2}}\right)^3}-\frac{1}{2^{2}}\frac{1}{\left(1+\frac{i}{2}\frac{t}{n+\frac{j\beta}{2}}\right)^3}\right|\\
    \leq 2\sum_{n=1}^{\infty}\sum_{j=0}^{N-1}\left(\frac{1}{n+\frac{j\beta}{2}}\right)^3\frac{5}{4}=\frac{5}{2}\sum_{n=1}^{\infty}\frac{1}{n^3}+\frac{5}{2}\sum_{n=1}^{\infty}\sum_{j=1}^{N-1}\left(\frac{1}{n+\frac{j\beta}{2}}\right)^3.
\end{multline*}
Since the function $x\mapsto\frac{1}{\left(n+\frac{x\beta}{2}\right)^3}$ is decreasing we estimate the second sum by an integral as follows:
$$
\left|\frac{\partial^3}{\partial t^3}\log(\psi_{N,\beta})(t)\right|\leq\frac{5}{2}\sum_{n=1}^{\infty}\frac{1}{n^3}+\frac{5}{2}\sum_{n=1}^{\infty}\sum_{j=1}^{\infty}\left(\frac{1}{n+\frac{j\beta}{2}}\right)^3\leq\frac{5}{2}\zeta(3)+\frac{5}{2}\sum_{n=1}^{\infty}\int_{0}^{\infty}\frac{dx}{\left(n+\frac{x\beta}{2}\right)^3}.$$
We apply then a few elementary estimates 
$$\left|\frac{\partial^3}{\partial t^3}\log(\psi_{N,\beta})(t)\right|\leq \frac{15}{4}+\frac{5}{\beta}\sum_{n=1}^{\infty}\frac{1}{2n^2}\leq\frac{15}{4}+\frac{5}{\beta}\frac{\pi^2}{12}\leq\frac{15}{4}+\frac{5}{\beta},$$
and this completes the proof.
\end{proof}

In particular, Lemma \ref{lemma:cumulantcalculation} shows that the variance of $\log \left|D_{N,\beta}(\theta)\right|$ grows with $N$, while its higher cumulants remain bounded, as in $\cite{KeatingSnaith}$. This implies  a rate of convergence for $\psi_{N,\beta}(t/T_N)$ converging to $e^{-t^2/2}$ when $t=\mathcal O\left(\sqrt{\var(\log \left|D_{N,\beta}(\theta)\right|)}\right)$ as $N \to \infty$.  See also \cite{Ashkan}.
\begin{lem}\label{lemma:near0} 
Let $T_N=\sqrt{\var(\log|D_{N,\beta}(\theta)|)}$, then:
\begin{enumerate}
\item
\begin{equation}\label{eqn:variancebound}
\frac{1}{\beta}\log\left(1+\frac{\beta N}{2}\right)<\var\left(\log\left|D_{N,\beta}(\theta)\right|\right)\leq \frac{1}{\beta}\left(\log(N)+1+\beta \right).
\end{equation}
\item
 for $|t|<T_N$,
\begin{equation}\label{eqn:near0diff}
\left|\psi_{N,\beta}(t/T_N)-e^{-t^2/2}\right|\leq \frac{c_1}{T_N^3}e^{-t^2/2}|t^3|,
\end{equation}
where $c_1= \frac{c}{6}\exp\left(\frac{c}{6}\right)$, $c=\frac{5}{\beta}+\frac{15}{4}$.
\end{enumerate}
\end{lem}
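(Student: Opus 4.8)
The plan is to prove the two claims in turn, using the cumulant expressions and the third-derivative bound from Lemma \ref{lemma:cumulantcalculation}.

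For part (1), I would start from the exact series \eqref{eqn:varianceseries}, namely $\var(\log|D_{N,\beta}(\theta)|)=\tfrac12\sum_{j=0}^{N-1}\sum_{n=1}^\infty (n+j\beta/2)^{-2}$. For the lower bound I would compare the inner sum to an integral from above: since $x\mapsto (x+j\beta/2)^{-2}$ is decreasing, $\sum_{n=1}^\infty (n+j\beta/2)^{-2} \geq \int_1^\infty (x+j\beta/2)^{-2}\,dx = (1+j\beta/2)^{-1}$, and then $\tfrac12\sum_{j=0}^{N-1}(1+j\beta/2)^{-1}\geq \tfrac12\int_0^N (1+x\beta/2)^{-1}\,dx=\tfrac1\beta\log(1+\beta N/2)$. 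For the upper bound I would peel off the $j=0$ term, which contributes $\tfrac12\zeta(2)=\pi^2/12<1$, and for $j\geq1$ compare from below by an integral: $\sum_{n=1}^\infty (n+j\beta/2)^{-2}\leq \int_0^\infty(x+j\beta/2)^{-2}\,dx=(j\beta/2)^{-1}$, giving $\tfrac12\sum_{j=1}^{N-1}(j\beta/2)^{-1}=\tfrac1\beta\sum_{j=1}^{N-1}\tfrac1j\leq \tfrac1\beta(1+\log N)$. Adding the $j=0$ contribution and bookkeeping the constants yields $\tfrac1\beta(\log N+1+\beta)$ as claimed; the constants are generous so only crude estimates are needed.

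For part (2), the idea is the standard Taylor-with-remainder argument for characteristic functions, working with $g_N(t):=\log\psi_{N,\beta}(t)$, which is well-defined and smooth near $0$ by the analysis in the proof of Lemma \ref{lemma:cumulantcalculation}. By that lemma $g_N(0)=0$, $g_N'(0)=i\,\mathbb E(\log|D_{N,\beta}|)=0$, $g_N''(0)=-T_N^2$, and $|g_N'''(t)|\leq c$ for all $t$ with $c=\tfrac5\beta+\tfrac{15}4$. Taylor's theorem with Lagrange remainder gives $g_N(s)=-\tfrac12 T_N^2 s^2+R_N(s)$ with $|R_N(s)|\leq \tfrac{c}{6}|s|^3$. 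Substituting $s=t/T_N$ and exponentiating, $\psi_{N,\beta}(t/T_N)=e^{-t^2/2}e^{R_N(t/T_N)}$ with $|R_N(t/T_N)|\leq \tfrac{c}{6}|t|^3/T_N^3$. Then $|\psi_{N,\beta}(t/T_N)-e^{-t^2/2}|=e^{-t^2/2}|e^{R_N(t/T_N)}-1|\leq e^{-t^2/2}|R_N(t/T_N)|\,e^{|R_N(t/T_N)|}$, using $|e^w-1|\leq|w|e^{|w|}$. For $|t|<T_N$ we have $|t|^3/T_N^3<1$, so $|R_N(t/T_N)|\leq c/6$ and the exponential factor is bounded by $e^{c/6}$, giving the bound $\tfrac{c}{6}e^{c/6}\,e^{-t^2/2}|t|^3/T_N^3$, which is exactly \eqref{eqn:near0diff} with $c_1=\tfrac{c}{6}\exp(\tfrac{c}{6})$.

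There is no real obstacle here; the only point requiring a little care is confirming that $\log\psi_{N,\beta}$ admits the termwise-differentiable representation used above on a neighborhood of the origin large enough to contain $t/T_N$ for $|t|<T_N$ (i.e.\ an interval around $0$), and that the third-derivative bound \eqref{eqn:near0} holds globally in $t$ so that the Lagrange remainder estimate is valid on the whole segment from $0$ to $t/T_N$ — both of which are already established in Lemma \ref{lemma:cumulantcalculation}. The inequality $|e^w-1|\leq |w|e^{|w|}$ for real (indeed complex) $w$ is elementary. The restriction $|t|<T_N$ is used only to keep $|R_N(t/T_N)|$ bounded by a fixed constant so that the constant $c_1$ is uniform in $N$ and $t$.
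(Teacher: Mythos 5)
Your proposal is correct and follows essentially the same route as the paper: integral comparison for the variance series in part (1), and for part (2) a third-order Taylor expansion of $\log\psi_{N,\beta}$ around $0$ combined with the uniform third-derivative bound \eqref{eqn:near0} and the elementary inequality $|e^w-1|\leq|w|e^{|w|}$, with $|t|<T_N$ used only to control the exponential factor by $e^{c/6}$. The only cosmetic difference is that you spell out the $|e^w-1|\leq|w|e^{|w|}$ step and the nonvanishing of $\psi_{N,\beta}$ near $0$, which the paper leaves implicit (and both treatments invoke a Lagrange-type remainder for the complex-valued $\log\psi_{N,\beta}$, which more precisely is the integral form of the remainder).
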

\begin{proof}
\begin{enumerate}
\item
We estimate the series for the variance, \eqref{eqn:varianceseries}, first from above and then below:
$$
\frac{1}{2}\sum_{j=0}^{N-1}\sum_{n=1}^{\infty}\frac{1}{(n+\frac{j\beta}{2})^2}=\pi^2/12+\frac{1}{2}\sum_{j=1}^{N-1}\sum_{n=1}^{\infty}\frac{1}{(n+\frac{j\beta}{2})^2}\leq\pi^2/12+\frac{1}{2}\sum_{j=1}^{N-1}\int_{0}^{\infty}\frac{dx}{(x+\frac{j\beta}{2})^2}
$$
$$
=\pi^2/12+\frac{1}{2}\sum_{j=1}^{N-1}\frac{1}{\frac{j\beta}{2}}<\frac{1}{\beta}+\frac{\pi^2}{12}+\frac{1}{\beta}\log(N-1)\leq \frac{1}{\beta}\left(\log(N)+1+\beta \right).
$$
Similarly, we use the monotonicity of $\frac{1}{x+\frac{j\beta}{2}}$, $\frac{1}{(x+\frac{j\beta}{2})^2}$ and a dissection, 
$$
\frac{1}{2}\sum_{j=0}^{N-1}\sum_{n=1}^{\infty}\frac{1}{(n+\frac{j\beta}{2})^2}\geq\frac{1}{2}\sum_{j=0}^{N-1}\frac{1}{1+\frac{j\beta}{2}}=\frac{1}{\beta}\log\left(1+\frac{\beta N}{2}\right).
$$

\item 
Taylor expanding $\log\psi_{N,\beta}(t)$ around $0$, it follows that for some $\xi(t)\in[0,t/T_N]$,
$$\log\psi_{N,\beta}(t/T_N)=\frac{-t^2}{2}+\frac{1}{3!}\left(\frac{t}{T_N}\right)^3\frac{\partial^3}{\partial t^3}\log(\psi_{N,\beta})(\xi).$$
Then we can estimate the difference, $|\psi_{N,\beta}(t)-e^{-t^2/2}|$, using the upper bound on $\frac{\partial^3}{\partial t^3}\log(\psi_{N,\beta})$, \eqref{eqn:near0}:
$$\left|\exp(\log(\psi_{N}(t/T_N)))-e^{-t^2/2}\right|=e^{-t^2/2}\left|\exp\left(\frac{t^3}{6T_N^3}\frac{\partial^3}{\partial t^3}\log(\psi_{N,\beta})(\xi)\right)-1\right|$$
$$\leq e^{-t^2/2} \frac{ct^3}{6T_N^3}\exp\left(\frac{ct^3}{6T_N^3}\right).$$
Assuming $|t|\leq T_N$, we deduce \eqref{eqn:near0diff}.
\end{enumerate}
\end{proof}
\begin{rmk}
A result such as \eqref{eqn:near0diff} can be established purely from \eqref{eqn:cumulants} for $|t|<\frac{1}{2}T_N$. 
\end{rmk}
From this lemma we see that, as $N \to \infty$
$$
 \frac{\log |D_{N,\beta}(\theta)|-\mathbb E \log |D_{N,\beta}(\theta)|}{\sqrt{\var \log |D_{N,\beta}(\theta)|}} \to \mathcal{N}(0,1),
$$
in distribution. This is a well-known result for $\beta=2$ \cite{KeatingSnaith}, but for general $\beta>0$ first proved in \cite{Su}. However, this is not enough to establish Benford's law for the leading digits and the uniform law for the further digits. We will prove that the convergence holds even in total variation and we give an estimate on the rate of convergence. This will allow us to prove Theorem \ref{thm1} and \ref{thm2} simultaneously.

\section{Bounding the characteristic function} \label{sec:bound}
A hurdle to purely working with the cumulants \eqref{eqn:cumulants} is that the cumulant generating function of $\log\left|D_{N,\beta}(\theta)\right|$ has a radius of convergence $1$. This means that the cumulants only allow us to control $\psi_{N,\beta}(t)$ for $|t|\leq 1$. In this section we will prove an upper bound on the characteristic function $\psi_{N,\beta}(t)$ that is valid for all $t\in \mathbb R$ uniformly in $N$.  This will allow us to prove convergence in total variation for the CLT for $\log |D_{N,\beta}(\theta)|$. We will need to distinguish two regimes:
\begin{enumerate}
    \item The low frequency regime: $|t|\leq N^{5/7} \beta/2$.
    \item The high frequency regime: $|t|>N^{5/7} \beta/2$.
\end{enumerate} 
In the first region, for $|t|<\frac{\beta}{8}N^{\frac{1}{6}}$, the asymptotic behavior of $\psi_{N,\beta}(t)$ is understood very well, with the use of Binet's identity for the gamma function and the Abel-Plana summation formula, see Theorem 5.1 and Theorem 4.14 in \cite{DalBorgo}. We note that our bound in this regime, given in Proposition \ref{proposition:smallt} is valid over a larger region, and relies on the expression in Lemma \ref{lemma:family} and explicit estimates. \subsection{The low frequency regime: $|t|\leq N^{5/7}\frac{\beta}{2}$}
In the low frequency regime we prove the following bound:
\begin{prop}[Estimate for small $t$]\label{proposition:smallt}
Let $\psi_{N,\beta}$ be the characteristic function of the Circular $\beta-$Ensemble. Then if  $|t|\leq N^{5/7}\frac{\beta}{2},$ $N$ sufficiently large,
\begin{equation}\label{eqn:propsmallt}
|\psi_{N,\beta}(t/T_N)|\leq \exp\left(-c_2\frac{t^2}{2}\right),
\end{equation}
where $c_2= \frac{1}{32}.$
\end{prop}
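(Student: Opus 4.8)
The plan is to bound $|\psi_{N,\beta}(t/T_{N})|$ through its squared modulus, using the product formula of Lemma~\ref{lemma:family}. Taking absolute values in \eqref{eqn:family} and abbreviating $m=n+\tfrac{j\beta}{2}$, for every real argument $s$ one has the exact identity
$$
|\psi_{N,\beta}(s)|^{2}=\prod_{j=0}^{N-1}\prod_{n=1}^{\infty}f\!\left(\frac{s^{2}}{m^{2}}\right),\qquad f(x):=\frac{(1+x/4)^{2}}{1+x}.
$$
I would record three elementary facts about $f$ on $[0,\infty)$, which are all that is needed: $f(x)\le1$ iff $x\le8$, and $f(x)\le x/8$ iff $x\ge8$ (both reduce to $(x-8)(x+2)\ge0$); and $\log f(x)\le-x/5$ on $[0,1]$, which follows because $g(x):=\log f(x)+x/5$ vanishes at $0$, has $g'(0)=-3/10<0$, $g'(1)=1/10>0$, and $g'$ increasing on $[0,1]$, so $g$ has a single interior minimum and $\max_{[0,1]}g=\max\{0,\log(25/32)+\tfrac15\}=0$. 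Now set $s=t/T_{N}$, write $u:=t/T_{N}$ (so $t^{2}=u^{2}T_{N}^{2}$), and note that by \eqref{eqn:varianceseries} one has $\sum_{m}m^{-2}=2T_{N}^{2}$. Since $|\psi_{N,\beta}|$ is even, the asserted bound \eqref{eqn:propsmallt} is equivalent to
$$
\sum_{m}\log f\!\left(\frac{u^{2}}{m^{2}}\right)\le-\frac{u^{2}T_{N}^{2}}{32}.
$$
For $0\le u\le1$ this is immediate from $\log f(x)\le-x/5$ on $[0,1]$ together with $\sum_{m}m^{-2}=2T_{N}^{2}$, so from now on I may assume $u\ge1$.

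The next step is to split the sum according to the size of $m$ relative to $u$. For $m\ge u$ one has $u^{2}/m^{2}\le1$, so these terms contribute at most $-\tfrac15u^{2}\sum_{m\ge u}m^{-2}$. For $u/\sqrt8\le m<u$ one has $u^{2}/m^{2}\in[1,8]$, hence $\log f\le0$ and the contribution is nonpositive. For $m<u/\sqrt8$ one has $u^{2}/m^{2}>8$, so $\log f(u^{2}/m^{2})\le\log\bigl(u^{2}/(8m^{2})\bigr)=2\log(v/m)$ with $v:=u/\sqrt8$, and a routine comparison with integrals (using monotonicity in $n$ and then in $j$) gives $\sum_{m<v}\log(v/m)\le v+\tfrac{v^{2}}{2\beta}$. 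Altogether,
$$
\sum_{m}\log f\!\left(\frac{u^{2}}{m^{2}}\right)\le\frac{u}{\sqrt2}+\frac{u^{2}}{8\beta}-\frac{u^{2}}{5}\sum_{m\ge u}\frac{1}{m^{2}}.
$$

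The heart of the proof is to show that the negative term dominates, and this is where the hypothesis $|t|\le N^{5/7}\beta/2$ enters. Write $\sum_{m\ge u}m^{-2}=2T_{N}^{2}-\sum_{m<u}m^{-2}$. An integral comparison gives $\sum_{m<u}m^{-2}\le\pi^{2}/6+2\beta^{-1}+2\beta^{-1}\log(2u/\beta)$, while \eqref{eqn:variancebound} gives $T_{N}^{2}>\tfrac1\beta\log(1+\tfrac{\beta N}{2})$; the hypothesis then forces $u\le N^{5/7}\beta^{3/2}/\sqrt{2\log N}$ for $N$ large, so $\log u\le\tfrac57\log N+\mathcal O(1)$ and hence $\sum_{m<u}m^{-2}\le\tfrac{10}{7}\tfrac{\log N}{\beta}(1+o(1))$. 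On the other hand $2T_{N}^{2}\ge\tfrac2\beta\log(1+\tfrac{\beta N}{2})\ge\tfrac{2\log N}{\beta}(1-o(1))$, so subtracting, $\sum_{m\ge u}m^{-2}\ge\tfrac47\tfrac{\log N}{\beta}(1-o(1))\ge\tfrac13T_{N}^{2}$ for $N$ large (using also $T_{N}^{2}\le\tfrac1\beta(\log N+1+\beta)$). Since moreover $u/\sqrt2+u^{2}/(8\beta)=o(u^{2}T_{N}^{2})$ uniformly over $u\ge1$, the displayed upper bound becomes, for $N$ large, $\le-\tfrac1{15}u^{2}T_{N}^{2}+o(u^{2}T_{N}^{2})\le-\tfrac1{32}u^{2}T_{N}^{2}=-t^{2}/32$, and taking square roots yields \eqref{eqn:propsmallt} with $c_{2}=1/32$.

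No individual step here is deep; the real work is bookkeeping. The main obstacle is calibrating three choices at once — the cutoff at $m\sim u$, the exponent $5/7$ defining the admissible range of $t$, and the slope $\tfrac15$ in $\log f(x)\le-x/5$ — so that the unavoidable positive contribution of the $\mathcal O(u^{2}/\beta)$ factors with $m<u/\sqrt8$ (present because $f(x)>1$ for $x>8$) is of strictly smaller order than the $\Theta(u^{2}T_{N}^{2})$ produced by the tail $\sum_{m\ge u}m^{-2}$; the endpoint $|t|=N^{5/7}\beta/2$ is precisely what guarantees that a definite fraction of the total $\sum_{m}m^{-2}=2T_{N}^{2}$ stays inside that tail.
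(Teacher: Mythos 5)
Your proof is correct, and it reaches the same endpoint by a genuinely different decomposition of the product formula. The paper's argument (equations \eqref{eqn:smallttoproceed}--\eqref{eqn:smalltdeduction}) cuts the $j$-index at $J=\lceil\tfrac{2}{\beta}(t-1)\rceil$, so that for every $j\geq J$ and $n\geq1$ the argument $\xi=t/(n+j\beta/2)$ satisfies $|\xi|<1$; the entire block $\prod_{j=0}^{J-1}\prod_{n}$ is then discarded in one stroke by observing that it equals $|\psi_{J,\beta}(t)|^{2}\leq1$, because $\psi_{J,\beta}$ is itself the characteristic function of $\log|D_{J,\beta}(\theta)|$. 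That one observation absorbs all the potentially dangerous factors (where $f>1$) for free. You instead cut on $m=n+j\beta/2$ at three thresholds ($m\geq u$, $u/\sqrt8\leq m<u$, $m<u/\sqrt8$) and handle the dangerous regime $m<u/\sqrt8$ by the explicit pointwise bound $f(x)\leq x/8$ for $x\geq8$, reducing it to $\sum_{m<v}\log(v/m)\leq v+v^{2}/(2\beta)$ with $v=u/\sqrt8$. This is more work, and the per-factor slope $\log f(x)\leq-x/5$ on $[0,1]$ you use is slightly weaker than the paper's $f(x)\leq\exp(-\tfrac{7}{32}x)$ there, but both give more than the final constant $1/32$ needs. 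What your route buys is self-containedness: it never invokes the fact that a partial product of the Gamma-factor representation is again a C$\beta$E characteristic function, and it makes explicit (in your final remark about calibration) why the positive contribution from the bad factors is only $\mathcal O(u^{2}/\beta)$ while the good factors produce $\Theta(u^{2}T_{N}^{2})$, which is precisely the information the paper packages invisibly into $|\psi_{J,\beta}(t)|\leq1$. Both proofs use the hypothesis $|t|\leq N^{5/7}\beta/2$ in the same place and in the same way: to ensure that enough of the total mass $\sum_{m}m^{-2}=2T_{N}^{2}$ sits in the tail $m\geq u$.
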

\begin{proof}
We recall expression  \eqref{eqn:family} for the characteristic function given in Lemma \ref{lemma:family} and make the observation that
$$\psi_{N,\beta}(t)=\psi_{J,\beta}(t)\prod_{j=J}^{N-1}\frac{\left(1+\frac{i}{2}\frac{t}{n+\frac{j\beta}{2}}\right)^2}{1+i\frac{t}{n+\frac{j\beta}{2}}}.$$
Hence, taking absolute value and squaring on both sides we see that, for $J\in\mathbb{N}$, $J<N$,
\begin{equation}\label{eqn:smallttoproceed}
|\psi_{N,\beta}(t)|^2=|\psi_{J,\beta}(t)|^2\prod_{j=J}^{N-1}\prod_{n=1}^{\infty}\frac{\left(1+\frac{1}{4}\left(\frac{t}{n+\frac{j\beta}{2}}\right)^2\right)^2}{\left(1+\left(\frac{t}{n+\frac{j\beta}{2}}\right)^2\right)}.
\end{equation}
As $|\psi_{N,\beta}(t)|$ is even in $t$, we restrict our attention to $t\geq0$. We denote $\xi(t,n,j,\beta)=\frac{t}{n+\frac{j\beta}{2}}$. 
Observe that
\begin{equation}\label{eqn:smalltexpbound}
\frac{\left(1+\frac{\xi^2}{4}\right)^2}{1+\xi^2}=\frac{1+\frac{\xi^2}{2}+\frac{\xi^4}{16}}{1+\xi^2}\leq\left(1-\frac{7}{32}\xi^2\right)\leq\exp\left(-\frac{7}{32}\xi^2\right), \text{ when $|\xi|<1.$}
\end{equation}
For $t\leq 1+\frac{J\beta}{2}$, we apply \eqref{eqn:smalltexpbound} to \eqref{eqn:smallttoproceed}, to see 
\begin{equation*}\label{eqn:firstestsmallt}
|\psi_{N,\beta}(t)|^2\leq|\psi_{J,\beta}(t)|^2\exp\left(-\sum_{j=J}^{N-1}\sum_{n=1}^{\infty}\frac{7t^2}{32}\frac{1}{(n+\frac{j\beta}{2})^2}\right), \text{ for $|t|\leq\frac{J\beta}{2}+1$}.
\end{equation*}
Now, for fixed $j\in\{J,J+1,\ldots,N-1\}$,
$$\sum_{n=1}^{\infty}\frac{1}{(n+\frac{j\beta}{2})^2}\geq \int_{1}^{\infty}\frac{dx}{(x+\frac{j\beta}{2})^2}=\frac{1}{1+\frac{j\beta}{2}}\quad\text{and}\quad\sum_{j=J}^{N-1}\frac{1}{1+\frac{j\beta}{2}}\geq\int_{J}^{N}\frac{dx}{1+x\beta/2}=\frac{2}{\beta}\log\left(\frac{1+\frac{N\beta}{2}}{1+\frac{J\beta}{2}}\right).$$
Thus, collecting these computations and recalling that $|\psi_{J,\beta}(t)|\leq 1$, as a characteristic function, we see
\begin{equation}\label{eqn:smalltdeduction}
|\psi_{N,\beta}(t)|^2\leq \exp\left(-\frac{7t^2}{16\beta}\log\left(\frac{1+\frac{N\beta}{2}}{1+\frac{J\beta}{2}}\right)\right), \text{ for $|t|\leq\frac{J\beta}{2}+1$}.
\end{equation}
Now any fixed $t$ with $0<t<\frac{N\beta}{2}$ satisfies \eqref{eqn:smalltdeduction} with $J=J(t)=\left\lceil\frac{2}{\beta}(t-1)\right\rceil$. Hence using that $J=\left\lceil\frac{2}{\beta}(t-1)\right\rceil\leq \frac{2}{\beta}(t-1)+1$ in \eqref{eqn:smalltdeduction},
$$|\psi_{N,\beta}(t)|^2\leq \exp\left(-\frac{7t^2}{16\beta}\left(\log\left(1+\frac{N\beta}{2}\right)-\log\left({t+\frac{\beta}{2}}\right)\right)\right).$$
Now assume $t<\frac{\beta}{2}N^{\alpha}$, and fix $1>\alpha'>\alpha>0$ and N sufficiently large so that $N^{\alpha'}>N^\alpha+1$. It follows then that
$$|\psi_{N,\beta}(t)|^2\leq \exp\left(-\frac{7t^2}{16\beta}\log\left(\frac{N}{N^{\alpha}+1}\right)\right)\leq \exp\left(\frac{-7(1-\alpha')t^2}{16\beta}\log(N)\right).$$ 
Hence, recalling \eqref{eqn:variancebound}, and that $T_N^2=\var(\log\left|D_{N,\beta}(\theta)\right|)$, and assuming $N>e^{1+\beta}$ so that $1+\beta<\log N$,
$$|\psi_{N,\beta}(t/T_N)|^2\leq \exp\left(\frac{-7(1-\alpha')t^2}{32}\right).$$
Choosing $\alpha=5/7, \alpha'=6/7$, and $N$ sufficiently large ($N\geq 6$) we conclude the result.
\end{proof}
The bound given in \eqref{eqn:propsmallt} is already sufficient to prove convergence in Kolmogorov distance of $\frac{\log \left|D_{N,\beta}(\theta)\right|}{T_N}$ to $\mathcal{N}(0,1)$ at rate $\mathcal{O}\left(\frac{1}{T_N^3}\right)$. This result is proved for   $\beta=2$ in \cite{Ashkan}  (and other classical compact groups), using Mod-$\phi$ convergence. We mention an alternative proof in the appendix. For the proof of our results and especially Theorem \ref{thm2}, we need to control all frequencies and obtain convergence in total variation.
 
The proof of Proposition \ref{proposition:smallt} relies on the term-wise estimate \eqref{eqn:smalltexpbound}, which would give us the gaussian decay we want only if $|t|\leq const N^{\alpha}$ for $\alpha<1$. We need a different way to estimate when $t$ is outside of that region.  

\subsection{High frequency regime: $|t|\geq N^{\alpha}\frac{\beta}{2}$}
To prove total variation convergence we still need find an appropriate bound for $\psi_{N,\beta}(t)$ for $|t|>N^{5/7}\frac{\beta}{2}$.   The following rough estimate suffices.
\begin{prop}\label{prop:allt}
Let $\psi_{N,\beta}$ be the characteristic function of the Circular $\beta$-Ensemble. Then there is a constant $C=C(\beta)>0$ such that
\begin{equation}
|\psi_{N,\beta}(t)|\leq \min\left(1,\frac{C}{t^2}\right).
\end{equation}
\end{prop}

The above result relies on the following lemma.
\begin{lem}\label{lemma:usingstirling}
For $\alpha\geq 0$ define
\begin{equation*}
\phi_{\alpha}(t)=\frac{\Gamma(1+\alpha+it)\Gamma(1+\alpha)}{\Gamma\left(1+\alpha+\frac{it}{2}\right)^2}.
\end{equation*}
\begin{enumerate}
\item 
Then
\begin{equation}\label{eqn:verblunski}
 |\phi_{\alpha}|\leq 1,
 \end{equation}
\item and for $c_*= \frac{9e}{4\sqrt 2\pi}$ and $0\leq\varepsilon<1$ ,
\begin{equation}\label{eqn:stirlingextralemma}
|\phi_{\varepsilon}(t)|\leq \frac{c_*}{|t|^{1/2}}\quad\text{for}\quad |t|>4e.
\end{equation}
\end{enumerate}
\end{lem}

\begin{proof}
\begin{enumerate}
\item
One way to see this is by observing that $|\phi_{\alpha}|$ has a global maximum at $0$. Alternatively, an elegant way to prove this statement is by noting that $\phi_{\alpha}$ is the characteristic function of a suitably constructed random variable, see lemma 3.1 in \cite{Bourgade}.
\item
The result is a straightforward application of a Stirling approximation for the Gamma function, see (2.1.1) in \cite{stirling}.
\end{enumerate}
\end{proof}
Now the proof of Proposition \ref{prop:allt} is straightforward. 
\begin{proof}[Proof of Proposition \ref{prop:allt}]
Assuming the notation of Lemma \ref{lemma:usingstirling}, since $\phi_{\alpha}(t)$ is a characteristic function for any $\alpha\geq0$, $|\phi_{\alpha}(t)|\leq 1$.
Moreover, using that $\Gamma(z+1)=z\Gamma(z)$ we see that
$$\phi_{\alpha}(t)=\prod_{k=1}^{[\alpha]}\frac{(\alpha+1-k+it)(1+\alpha-k)}{\left(1+\alpha-k+\frac{it}{2}\right)^2}\phi_{\alpha-[\alpha]}(t),$$
and in particular since $|\phi_{\alpha-[\alpha]}(t)|\leq1$ by Lemma \ref{lemma:usingstirling}, \eqref{eqn:verblunski}, it follows that
$$|\phi_{\alpha}(t)|\leq (1+[\alpha])!\left(\frac{4}{|t|}\right)^{[\alpha]}.$$
Hence making use of the product structure in \eqref{eqn:charfn} we see that
\begin{equation}\label{eqn:fastbound}
|\psi_{N,\beta}(t)|=\prod_{j=0}^{N-1}|\phi_{\alpha_j}(t)|\leq\left|\phi_{\alpha_j}(t)\right|\leq \left(1+[\alpha_j]\right)!\left(\frac{4}{|t|}\right)^{[\alpha_j]},\text{ where }\quad\alpha_{j}=j\frac{\beta}{2}.
\end{equation}
The result follows from the right-hand side of \eqref{eqn:fastbound} when  we fix $ j =\left[\frac{4}{\beta}\right]+1.$
\end{proof}

\subsection{ Tail behaviour of $\psi_{N,\beta}(t)$ } \label{refinedestimate}

Proposition \ref{prop:allt} gives a bound for the high frequency regime that is sufficient for our purposes. We briefly investigate a more refined estimate of the tail of the characteristic function in Section \ref{refinedestimate}. The discussion is not needed for our main results on the distribution of the digits, but since the bound is stronger and the proof is in the same spirit as the proof of Proposition \ref{proposition:smallt} we include it here.

 \begin{prop}\label{prop:bigt}
Let $\psi_{N,\beta}$ be the characteristic function of $\log\left|D_{N,\beta}(\theta)\right|$. Then, for $t\geq N\beta\sqrt2 >4e$, 
\begin{equation}\label{eqn:propbigt}
|\psi_{N,\beta}(t)|\leq \frac{c_*^{N}}{|t|^{N/2}}\exp\left(-\frac{N^2\beta}{50}\right)\exp\left(c_3 N\right),
\end{equation}
where $c_*$ is the same as in Lemma \ref{lemma:usingstirling} and $c_3= \frac{\beta/8-1}{2N}+\frac{3}{8}\beta+\frac{3}{2}$. 
\end{prop}

\begin{proof}[Proof of Proposition \ref{prop:bigt}]
By an iterative application of the identity $\Gamma(z+1)=z\Gamma(z)$ we can rewrite \eqref{eqn:charfn} in the form,
\begin{equation}\label{eqn:formbigt}
\psi_{N,\beta}(t)=\frac{\Gamma(1+it)}{\Gamma(1+it/2)^2}\prod_{j=1}^{N-1}\frac{ \Gamma\left(1+it+\frac{j\beta}{2}-\left[\frac{j\beta}{2}\right]\right)\Gamma\left(1+\frac{j\beta}{2}-\left[\frac{j\beta}{2}\right]\right)}{\Gamma\left(1+\frac{it}{2}+\frac{j\beta}{2}-\left[\frac{j\beta}{2}\right]\right)^2}P_{N,\beta}(t),
\end{equation}
where
\begin{equation*}\label{eqn:Pdefinition}
P_{N,\beta}(t)=\prod_{j=1}^{N-1}\prod_{k=1}^{\left[\frac{j\beta}{2}\right]}\frac{\left(1+it+\frac{j\beta}{2}-k\right)\left(1+\frac{j\beta}{2}-k\right)}{\left(1+\frac{it}{2}+\frac{j\beta}{2}-k\right)^2}=\prod_{j=1}^{N-1}\prod_{k=1}^{\left[\frac{j\beta}{2}\right]}\frac{\left(1+\frac{it}{1+\frac{j\beta}{2}-k}\right)}{\left(1+\frac{it/2}{1+\frac{j\beta}{2}-k}\right)^2}.
\end{equation*}
To bound the terms involving the gamma function in \eqref{eqn:formbigt}, we use Lemma \ref{lemma:usingstirling}. Hence,
$$
|\psi_{N,\beta}(t)|^2\leq\frac{c_*^{2N}}{|t|^{N}} |P_{N,\beta}(t)|^2,\quad\text{where}\quad|P_{N,\beta}(t)|^2=\prod_{j=1}^{N-1}\prod_{k=1}^{\left[\frac{j\beta}{2}\right]}\frac{\left(1+\frac{t^2}{(1+\frac{j\beta}{2}-k)^2}\right)}{\left(1+\frac{t^2}{4(1+\frac{j\beta}{2}-k)^2}\right)^2}.
$$
To obtain \eqref{eqn:propbigt} it remains to bound $|P_{N,\beta}(t)|$ from above. By invoking that $x\leq\exp(x-1)$ for $x>0$ termwise, we observe that for fixed $j\in\{1,2,...,N-1\}$,

$$\prod_{k=1}^{\left[\frac{j\beta}{2}\right]}\frac{\left(1+\frac{t^2}{(1+\frac{j\beta}{2}-k)^2}\right)}{\left(1+\frac{t^2}{4(1+\frac{j\beta}{2}-k)^2}\right)^2}\leq\exp\left( \sum_{k=1}^{\left[\frac{j\beta}{2}\right]}\left(S_{1,j}-S_{2,j}\right)\right),$$
where
\begin{equation*}
S_{1,j}=\sum_{k=1}^{\left[\frac{j\beta}{2}\right]}\frac{t^2}{2}\frac{1}{\frac{t^2}{4}+(1+\frac{j\beta}{2}-k)^2}\quad\text{and}\quad  S_{2,j}=\sum_{k=1}^{\left[\frac{j\beta}{2}\right]}\frac{3t^4}{16}\frac{1}{\left(\frac{t^2}{4}+(1+\frac{j\beta}{2}-k)^2\right)^2}.
\end{equation*}
In order to give an upper bound on the right-hand side we proceed to bound $S_{1,j}$ from above and $S_{2,j}$ from below. The function $x\mapsto \frac{1}{\frac{t^2}{4}+(1+\frac{j\beta}{2}-x)^2}$ is increasing in $x$ on $[0, \frac{j\beta}{2}+1)$, hence 
\begin{multline}\label{eqn:s1j}
S_{1,j}\leq \frac{t^2}{2} \int_{1}^{\left[\frac{j\beta}{2}\right]+1}\frac{dx}{\frac{t^2}{4}+(x-\frac{j\beta}{2}-1)^2}=\frac{t^2}{2}\int_{-\frac{j\beta}{2}}^{\left[\frac{j\beta}{2}\right]-\frac{j\beta}{2}}\frac{dy}{\frac{t^2}{4}+y^2}
\\= t\arctan\left(\frac{j\beta}{t}\right)-t\arctan\left(\frac{2}{t}\left(\frac{j\beta}{2}-\left[\frac{j\beta}{2}\right]\right)\right).
\end{multline}
We proceed in a similar way with $S_{2,j}$. 
Utilising that the function $x\mapsto\frac{1}{\left(\frac{t^2}{4}+(1+\frac{j\beta}{2}-\left[\frac{j\beta}{2}\right]+x)^2\right)^2}$ is decreasing on  $[0,\infty)$, we use a dissection estimate to see

\begin{multline}\label{eqn:s2j}
S_{2,j}=\sum_{k=0}^{\left[\frac{j\beta}{2}\right]-1}\frac{3t^4}{16}\frac{1}{\left(\frac{t^2}{4}+(1+\frac{j\beta}{2}-\left[\frac{j\beta}{2}\right]+k)^2\right)^2}\geq\frac{3t^4}{16}\int_{0}^{\left[\frac{j\beta}{2}\right]-1}\frac{dx}{\left(\frac{t^2}{4}+(1+\frac{j\beta}{2}-\left[\frac{j\beta}{2}\right]+x)^2\right)^2}
\\=\frac{3}{4}t\left(\arctan\left(\frac{j\beta}{t}\right)-\arctan\left(\frac{2}{t}\left(1+\frac{j\beta}{2}-\left[\frac{j\beta}{2}\right]\right)\right)\right)
\\+\frac{3}{8}t^2\left(\frac{(\frac{j\beta}{2})}{(\frac{j\beta}{2})^2+\frac{t^2}{4}}-\frac{1+\frac{j\beta}{2}-\left[\frac{j\beta}{2}\right]}{(1+\frac{j\beta}{2}-\left[\frac{j\beta}{2}\right])^2+\frac{t^2}{4}}\right).
\end{multline}
Hence, combining the two estimates, \eqref{eqn:s1j}, \eqref{eqn:s2j}, we see that
\begin{equation}\label{eqn:differencebound}
S_{1,j}-S_{2,j}\leq \frac{t}{4}\arctan\left(\frac{j\beta}{t}\right)-\frac{3}{8}t^2\frac{\frac{j\beta}{2}}{(\frac{j\beta}{2})^2+\frac{t^2}{4}}+3,
\end{equation}
where the constant comes from bounding $\frac{3}{4}t\arctan\left(\frac{2}{t}\left(1+\frac{j\beta}{2}-\left[\frac{j\beta}{2}\right]\right)\right)<3.$ Proceeding to sum \eqref{eqn:differencebound} over $j$, we see
\begin{equation}\label{eqn:targetboundbigt}
|P_{N,\beta}(t)|^2\leq \exp\left(\sum_{j=1}^{N-1}(S_{1,j}-S_{2,j})\right)\leq \exp\left(\sum_{j=1}^{N-1}\left(\frac{t}{4}\arctan\left(\frac{j\beta}{t}\right)-\frac{3}{8}t^2\frac{\frac{j\beta}{2}}{(\frac{j\beta}{2})^2+\frac{t^2}{4}}+3\right)\right).
\end{equation}
First we bound from above the positive part of the sum on the right-hand side of \eqref{eqn:targetboundbigt},
\begin{equation}\label{eqn:positiveterms}
\frac{t}{4}\sum_{j=1}^{N-1}\arctan\left(\frac{j\beta}{t}\right) \leq\frac{t}{4}\int_{1}^{N}\arctan\left(\frac{x\beta}{t}\right)dx
\leq\frac{Nt}{4}\arctan\left(\frac{N\beta}{t}\right)-\frac{t^2}{8\beta}\log\left(\left(\frac{N\beta}{t}\right)^2+1\right)+\frac{\beta}{8},
\end{equation}
where the constant term comes from bounding $\log\left(\left(\frac{\beta}{t}\right)^2+1\right)<\frac{\beta^2}{t^2},$ and $ \frac{\beta}{t}\arctan(\frac{\beta}{t})>0$. Then we bound the negative part of the sum in \eqref{eqn:targetboundbigt} from below. 
Note that the function $x\mapsto \frac{x}{x^2+\frac{t^2}{4}}$ is increasing on $[0,t/2)$, and hence since $t\geq N\beta\sqrt 2$, and so $\frac{j\beta}{2}\in\left[0,t/2\right)$ for $j\in\{1,2,\ldots, N\}$ it follows that
\begin{equation}\label{eqn:negativeterms}
\frac{3}{8}t^2\sum_{j=1}^{N-1}\frac{\frac{j\beta}{2}}{(\frac{j\beta}{2})^2+\frac{t^2}{4}}\geq \int_{0}^N \frac{\left(\frac{x\beta}{2}\right)dx}{\left(\frac{x\beta}{2}\right)^2+\frac{t^2}{4}}- \frac{3t^2}{8}\frac{\frac{N\beta}{2}}{(\frac{N\beta}{2})^2+\frac{t^2}{4}} 
 \geq\frac{3}{8\beta}t^2\log\left(\left(\frac{N\beta}{t}\right)^2+1\right)-\frac{3}{4}N\beta.
\end{equation}
Plugging in the results from \eqref{eqn:positiveterms} and \eqref{eqn:negativeterms} into \eqref{eqn:targetboundbigt}, we see that
$$|P_{N,\beta}(t)|^2\leq \exp\left(\frac{Nt}{4}\arctan\left(\frac{N\beta}{t}\right)-\frac{t^2}{2\beta}\log\left(\left(\frac{N\beta}{t}\right)^2+1\right)+\frac{\beta}{8}+\frac{3}{4}N\beta+3(N-1)\right).$$
Finally, observing that
\begin{equation*}
\log(4y^2+1)-y\arctan(2y) \geq \begin{cases}\frac{16}{25}y^2 \text{ for $y\in[0,1/4]$},
\\\frac{1}{25} \text{ for $y\in(1/4,1]$},
\end{cases}
\end{equation*}
we deduce by setting $y=\frac{N\beta}{2[t]}$ that
$$
|P_{N}(t)|\leq\begin{cases} \exp\left(-\frac{t^2}{4\beta}\frac{16}{25}\left(\frac{N\beta}{2t}\right)^2\right)\exp(Nc_3)=\exp\left(-\frac{N^2\beta}{25}\right)\exp\left(c_3N\right) \text{for $t\geq2N\beta$},
\\ \exp\left(-\frac{1}{100\beta}t^2\right)\exp(Nc_3)\leq \exp\left(-\frac{N^2\beta}{50}\right)\exp\left(c_3 N\right) \text{ for $\sqrt2 N\beta \leq t< 2N\beta$}.
\end{cases}
$$
The claimed estimate, \eqref{eqn:propbigt}, follows.
\end{proof}
\subsection{Convergence of $\psi_{N,\beta}(t)$ in $L^1(\mathbb{R})$ and $L^2(\mathbb{R})$}
The following result summarizes the conclusions of this section.
\begin{thm}\label{thm:boundingsectionsummary}
Let $\psi_{N,\beta}(t)$ be the characteristic function of $\log D_{N,\beta}(\theta),$ and let  \\$T_N=\sqrt{\var \log\left|D_{N,\beta}(\theta)\right|}$. Then the following estimates hold, where $c_1, c_2, c_3>0$ are (small) constants,
    \begin{equation}\label{eqn:combineestimates}
    \left|\psi_{N,\beta}(t/T_N)-e^{-t^2/2}\right|\leq\begin{cases}
    \frac{c_1}{T_N^3}\exp\left(-\frac{t^2}{2}\right)|t|^{3} \text{ for $|t|\leq T_N$},
    \\ e^{-t^2/2}+\exp\left(-c_2\frac{t^2}{2}\right) \text{ for $T_N\leq|t|<\frac{\beta}{2}N^{5/7}T_N$},
    \\ e^{-t^2/2}+c_3\left(\frac{T_n}{|t|}\right)^{2} \text{ for $|t|> \frac{\beta}{2}N^{5/7}T_N$}.
    \end{cases}
    \end{equation}
Furthermore,    
\begin{equation}\label{eqn:L1convergence}
\left\|\psi_{N,\beta}(t/T_N)-e^{-t^2/2}\right\|_{L^1}=\mathcal{O}\left(T_N^{-3}\right),
\end{equation}
and
\begin{equation}\label{eqn:L2convergence}
\left\|\psi_{N,\beta}(t/T_N)-e^{-t^2/2}\right\|_{L^2}=\mathcal{O}\left(T_N^{-3}\right).
\end{equation}
\end{thm}
\begin{proof}
The first line on the right-hand side of \eqref{eqn:combineestimates} follows from Lemma \ref{lemma:near0}, the second line from Proposition \ref{proposition:smallt}, and the third line from Proposition \ref{prop:bigt}. Then the bounds on the $L^1$ and $L^2$ norm follow from integrating the right-hand side of \eqref{eqn:combineestimates}, correspondingly in $\int_{\mathbb{R}}\left|\psi_{N,\beta}(t/T_N)-e^{-t^2/2}\right|^2dt\quad$ and $\quad\int_{\mathbb{R}}\left|\psi_{N,\beta}(t/T_N)-e^{-t^2/2}\right|dt$,
\begin{multline*}
\left\|\psi_{N,\beta}(t/T_N)-e^{-t^2/2}\right\|_{L^2}^2\\=\mathcal{O}\left(\frac{1}{T_N^6}\int_{-T_N}^{T_N}e^{-t^2}|t|^6dt+\int_{T_N}^{\frac{\beta}{2}N^{5/7}T_N}e^{-Ct^2}dt+\int_{\frac{\beta}{2}N^{5/7}T_N}^{\infty}e^{-t^2}dt+\int_{\frac{\beta}{2}N^{5/7}T_N}^{\infty}\frac{T_N^4}{t^4}dt\right)
\\=\mathcal{O}\left(\frac{1}{T_N^6}+\frac{1}{T_N}\exp\left(-CT_N^2\right)+\frac{T_N^4}{T_N^3N^{15/7}}
\right)=\mathcal{O}\left(T_N^{-6}\right),
\end{multline*}
and analogously,
\begin{equation*}
\left\|\psi_{N,\beta}(t/T_N)-e^{-t^2/2}\right\|_{L^1}=\mathcal{O}\left(T_N^{-3}\right).
\end{equation*}
The proof is complete.
\end{proof}

We are now ready to prove a CLT for $\log\left|D_{N,\beta}(\theta)\right|$.

\subsection{Bounding the total variation distance}
The above bounds on the convergence in $L^1$ norm and $L^2$ norm allow us to prove a CLT for $\log \left|D_{N,\beta}(\theta)\right|$ not only holds in distribution, but we even have convergence in total variation. Recall that for $P$ and $Q$ probability measures on $\mathbb{R}$, the total variation distance between $P$ and $Q$ is given by
\begin{equation*}
d_{TV}(P,Q)=\sup_{U\in\mathcal{B}(\mathbb{R})}|P(U)-Q(U)|.
\end{equation*}

\begin{thm}\label{thm:totalvariation}(Total variation convergence)
  Let $D_{N,\beta}(\theta)$ be the characteristic polynomial of the C$\beta$E and let $T_N=\sqrt{\var \left(\log|D_{N,\beta}(\theta)|\right)}$. Then:
    \begin{equation}
d_{TV}\left(\log\left|D_{N,\beta}(\theta)\right|, \mathcal{N}\left(0,T_N^2\right)\right)=o\left(\frac{1}{T_N^{3-\varepsilon}}\right),
    \end{equation}
as $N\to \infty$, for any $\eps>0$.
    \end{thm}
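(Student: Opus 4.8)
\emph{Proof strategy.} I would pass to densities via Fourier inversion and bound the total variation distance by weighted $L^2$-norms of the difference of characteristic functions, for which the three regime estimates of this section are tailor-made. Write $X_N=\log|D_{N,\beta}(\theta)|/T_N$, let $\phi$ denote the standard normal density, and set $\Delta_N(t)=\psi_{N,\beta}(t/T_N)-e^{-t^2/2}$; this is the Fourier transform of $f_N-\phi$, where $f_N$ is the density of $X_N$. By Proposition \ref{prop:bigt} we have $\psi_{N,\beta}\in L^1(\mathbb R)\cap L^2(\mathbb R)$ for $N$ large, so $f_N$ exists, is bounded and continuous, $f_N-\phi\in L^2$, and $x(f_N-\phi)\in L^1\cap L^2$ (the latter because $\mathbb E[X_N^2]=1$ and $\psi_{N,\beta}'\in L^2$, see Step~2). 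Applying Cauchy--Schwarz with the weight $(1+x^2)^{-1}$ and then Plancherel (using $\|x g\|_{L^2}^2=\tfrac1{2\pi}\|(\widehat g)'\|_{L^2}^2$), one gets
\[
\sup_{U\in\mathcal B(\mathbb R)}\bigl|\mathbb P(X_N\in U)-\mathbb P(\mathcal N(0,1)\in U)\bigr|=\tfrac12\|f_N-\phi\|_{L^1}\le\tfrac{\sqrt\pi}2\bigl(\|f_N-\phi\|_{L^2}^2+\|x(f_N-\phi)\|_{L^2}^2\bigr)^{1/2}=\tfrac1{2\sqrt2}\bigl(\|\Delta_N\|_{L^2}^2+\|\Delta_N'\|_{L^2}^2\bigr)^{1/2}.
\]
So it suffices to prove $\|\Delta_N\|_{L^2}^2=O(T_N^{-6})$ and $\|\Delta_N'\|_{L^2}^2=O(T_N^{-6})$; since $T_N^{2}$ is comparable to $\beta^{-1}\log N$ by \eqref{eqn:variancebound}, this gives the theorem (in fact with $\varepsilon=0$).

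\emph{Step 1: the $L^2$-norm of $\Delta_N$.} Split $\mathbb R$ according to the three frequency regimes for the argument $t/T_N$. On the bulk $|t|\le T_N$, Lemma \ref{lemma:near0} gives $|\Delta_N(t)|\le c_1T_N^{-3}|t|^3e^{-t^2/2}$, so this part contributes $c_1^2T_N^{-6}\int_{\mathbb R}t^6e^{-t^2}\,dt=O(T_N^{-6})$ --- this is the main term. On $T_N<|t|$ within the low-frequency regime, Proposition \ref{proposition:smallt} gives $|\Delta_N(t)|\le 2e^{-c_2t^2/2}$, so this part contributes $O(T_N^{-1}e^{-c_2T_N^2})$, which is polynomially small in $N$ by \eqref{eqn:variancebound} and hence negligible. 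On the intermediate regime Proposition \ref{prop:intermediatet} gives $|\psi_{N,\beta}(t/T_N)|\le c_*^4(t/T_N)^{-2}$ and on the high-frequency regime Proposition \ref{prop:bigt} gives $|\psi_{N,\beta}(t/T_N)|\le\tfrac2\pi c_*^{N-1}(t/T_N)^{-N/2}$; both ranges contribute quantities polynomially, respectively super-polynomially, small in $N$ (the normal tail $e^{-t^2/2}$ is of course harmless throughout). Summing, $\|\Delta_N\|_{L^2}^2=O(T_N^{-6})$.

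\emph{Step 2: the $L^2$-norm of $\Delta_N'$.} Here $\Delta_N'(t)=T_N^{-1}\psi_{N,\beta}'(t/T_N)+te^{-t^2/2}$ and $\psi_{N,\beta}'=\psi_{N,\beta}\cdot(\log\psi_{N,\beta})'$. On the bulk $|t|\le T_N$, Taylor-expand $(\log\psi_{N,\beta})'(s)=-T_N^2 s+\tfrac12 s^2(\log\psi_{N,\beta})'''(\eta)$ using the uniform bound $|(\log\psi_{N,\beta})'''|\le\tfrac5\beta+\tfrac{15}4$ of Lemma \ref{lemma:cumulantcalculation}; this gives $\Delta_N'(t)=-t\,\Delta_N(t)+\psi_{N,\beta}(t/T_N)\tfrac{t^2}{2T_N^3}(\log\psi_{N,\beta})'''(\eta)$, and combining with Lemma \ref{lemma:near0} and $|\psi_{N,\beta}(t/T_N)|\le 2e^{-t^2/2}$ on $|t|\le T_N$, both terms contribute $O(T_N^{-6})$ to $\|\Delta_N'\|_{L^2}^2$. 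For $|t|>T_N$ one needs pointwise bounds on $\psi_{N,\beta}'$ itself. These follow from the bounds on $|\psi_{N,\beta}|$ in Propositions \ref{proposition:smallt}, \ref{prop:intermediatet} and \ref{prop:bigt} via Cauchy's integral formula on a disc of radius $\tfrac12$: those estimates come from Stirling's approximation \eqref{eqn:stirling} in a sector about $\mathbb R_{>0}$ and persist on the strip $\{|\Im z|<1\}$, so $|\psi_{N,\beta}'|$ obeys bounds of the same shape (alternatively, the crude bound $|(\log\psi_{N,\beta})'(s)|\lesssim N\log(2+|s|+N)$ from the digamma asymptotics is more than offset by the decay of $|\psi_{N,\beta}|$). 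The resulting integrals over the intermediate and high regimes are again polynomially, respectively super-polynomially, small in $N$. Summing, $\|\Delta_N'\|_{L^2}^2=O(T_N^{-6})$, and with Step~1 this finishes the proof.

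\emph{Main obstacle and an alternative.} The delicate point is the derivative estimate in Step~2 in the intermediate and high regimes, because Propositions \ref{prop:intermediatet} and \ref{prop:bigt} only control $|\psi_{N,\beta}|$; the fix is the Cauchy estimate above, which obliges one to check that the Stirling-based bounds survive a small complex shift. One can also bypass the derivative altogether: write $\|f_N-\phi\|_{L^1}=\int_{|x|\le R}+\int_{|x|>R}$, bound the first term by $2R\|f_N-\phi\|_\infty\le\tfrac R\pi\|\Delta_N\|_{L^1}=O(R\,T_N^{-3})$ and the second by $\mathbb P(|X_N|>R)+\mathbb P(|\mathcal N(0,1)|>R)$, where sub-Gaussian tails for $X_N$ on scales $R\ll T_N$ follow from the Chernoff bound with the bounded higher cumulants of Lemma \ref{lemma:cumulantcalculation} (the cumulant generating function of $\log|D_{N,\beta}(\theta)|$ being finite on $(-1,\infty)$ by the gamma-function formula \eqref{eqn:charfn}); optimising $R\asymp\sqrt{\log T_N}$ then produces exactly $o(T_N^{-3+\varepsilon})$, which is the form stated.
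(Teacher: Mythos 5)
Your \emph{alternative} route at the end (split the $L^1$ norm of the density difference at $|x|=R$, control the bulk by $\|f_N-\phi\|_\infty \le \tfrac1{2\pi}\|\Delta_N\|_{L^1}$, control the tails probabilistically, optimise $R$) is essentially the paper's proof: the paper does the same windowing with $M$ in place of $R$, the bound $|r_{M,T_N}(t)|\le 2M$ playing the role of your $\|\cdot\|_\infty$ estimate, and arrives at $o(T_N^{-3+\varepsilon})$ by taking $M=T_N^{3/(2k)}$ and then $k\to\infty$. The only difference is the tail bound: the paper uses Chebyshev with arbitrarily high (bounded) moments from Lemma \ref{lemma:cumulantcalculation}, giving $O(M^{-2k})$; you propose a Chernoff/sub-Gaussian bound, which is also available (the moment generating function is finite on $(-1,\infty)$ by \eqref{eqn:charfn} and the cumulant series converges for $|u|<1$, so for $R\ll T_N$ one gets $\mathbb P(|X_N|>R)\lesssim e^{-R^2/2}$), giving the same final rate with $R\asymp\sqrt{\log T_N}$ and one fewer parameter. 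Both work.

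Your \emph{main} route (weighted Cauchy--Schwarz with $(1+x^2)^{-1}$ converting $\|f_N-\phi\|_{L^1}$ into $\|\Delta_N\|_{L^2}$ and $\|\Delta_N'\|_{L^2}$) is genuinely different, and if carried through would give the cleaner rate $O(T_N^{-3})$ with no $\varepsilon$-loss. The obstacle you flag --- that Propositions \ref{proposition:smallt}, \ref{prop:intermediatet}, \ref{prop:bigt} only control $|\psi_{N,\beta}|$ and not its derivative --- is real and is presumably why the paper avoids $\Delta_N'$. Of your two fixes, the digamma-based one is the cleaner and does close the gap: from \eqref{eqn:charfn} one has $(\log\psi_{N,\beta})'(t)=i\sum_{j=0}^{N-1}\bigl(\psi_0(1+it+j\tfrac\beta2)-\psi_0(1+\tfrac{it}2+j\tfrac\beta2)\bigr)$ with $\psi_0=\Gamma'/\Gamma$, and since $\psi_0(z)=\log z+O(1/z)$ on $\{\Re z\ge1\}$ this is $O(N\log(2+|t|+N))$; multiplying by the decay of $|\psi_{N,\beta}|$ from Propositions \ref{prop:intermediatet} and \ref{prop:bigt}, the intermediate- and high-frequency contributions to $\|\Delta_N'\|_{L^2}^2$ are $O(N^{-c})$ for some $c>0$, negligible against $T_N^{-6}\asymp(\log N)^{-3}$. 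So your main route is sound once this estimate is supplied explicitly; the Cauchy-estimate variant would in contrast require re-verifying Lemma \ref{lemma:usingstirling} and the Propositions on a complex strip, which is more work than it is worth. In summary, the alternative route coincides with the paper's proof, the main route is a valid and slightly sharper alternative that buys $O(T_N^{-3})$ at the cost of a short digamma estimate not present in the paper.
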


    \begin{proof}
    The following argument is standard in this context. Denote the indicator function of the set $A$ by $\chi_A$. Let $\rho_N(x)$ be the density of $Z_N=\frac{\log|D_{N,\beta}(\theta)|}{T_N}$ and fix $U\in\mathcal{B}(\mathbb{R})$. Then
    \begin{multline}
    \left|\mathbb{P}\left(Z_N\in U\right)-\mathbb{P}\left(\mathcal{N}(0,1)\in U\right)\right|=\left|\int_{\mathbb{R}}\chi_{U}\left(\rho_{N}(x)-\frac{1}{\sqrt{2\pi}}e^{-x^2/2}\right)dx\right|\\
    \leq \left|\int_{\mathbb{R}}\chi_{U\cap(-M,M)}\left(\rho_{N}(x)-\frac{1}{\sqrt{2\pi}}e^{-x^2/2}\right)dx\right| +\mathbb{P}\left(|Z_N|\geq M\right)+ P\left(|\mathcal{N}(0,1)|\geq M\right)\\
    \leq  \frac{\sqrt{2M}}{2\pi}\left\|\psi_{N,\beta}(t/T_N)-e^{-t^2/2}\right\|_{L^2}+\mathbb{P}\left(|Z_N|\geq M\right)+ P\left(|\mathcal{N}(0,1)|\geq M\right),
    \end{multline}
    by an application of Cauchy-Schwarz and Plancherel's theorem.  The latter two summands satisfy
    $$P\left(|\mathcal{N}(0,1)|\geq M\right)\leq \frac{2e^{-M^2/2}}{M\sqrt{2\pi}} \quad\text{and}\quad \mathbb{P}\left(|Z_N|\geq M\right)\leq\frac{1}{M^{2k}} \mathbb{E}\left[Z_N^{2k}\right]=\mathcal O\left(\frac{1}{M^{2k}}\right),$$
 where the bound on the $2k$-th moment of $Z_N$ is deduced from the boundedness of the higher cumulants in Lemma \ref{lemma:cumulantcalculation}.  Hence by Theorem \ref{thm:boundingsectionsummary}, we can conclude,
$$ \left|\mathbb{P}\left(Z_N\in U\right)-\mathbb{P}\left(\mathcal{N}(0,1)\in U\right)\right|=\mathcal{O}\left(\frac{M}{T_N^3}+M^{-2k}\right)\quad\text{as}\quad N\rightarrow\infty,\quad\text{for arbitrary $k\in\mathbb{N}$}.$$
  By choosing $M=T_N^{\varepsilon}$, for any fixed $\varepsilon>0$ and taking $k$ sufficiently large one concludes the result.
    \end{proof}
    
 \begin{rmk}
A careful inspection of our analysis of the characteristic function in fact reveals that we can allow for varying $\beta$ with $N$ and will still have convergence in total variation as long as $\beta= o\left(\log N\right)$ (that is, as long as the variance grows) i.e. we can allow for heating and moderate freezing with $N$. However for the sake of clarity we keep $\beta$ fixed in this work.
\end{rmk}
The rate of convergence in Theorem \ref{thm:totalvariation} of $o\left({T_N^{-3+\varepsilon}}\right)$ should be compared to  the rate of convergence in Kolmogorov distance established in \cite{BHNY, Ashkan} for $\beta=2$, that is $\mathcal O\left({T_N^{-3}}\right)$. In fact with some extra work our analysis produces a lower bound for the total variation distance that is $\Omega\left({T_N^{-3}}\right)$.

Having proved Theorem \ref{thm:totalvariation} we are now ready to prove our main results. 

\section{Proofs of Theorems \ref{thm1}, \ref{thm2} and \ref{thm:sufficientconditions}}
In this section we prove Theorems \ref{thm1}, \ref{thm2} and \ref{thm:sufficientconditions}.  
\subsection{Preliminaries on the distribution of individual digits}
We begin this section with a discussion on the connection between CLTs in total variation at the logarithmic scale with growing variance and results regarding the distribution of leading digits, leading up to Theorem \ref{thm:sufficientconditions}. For the results that follow we introduce some notation.   For $m\in\mathbb{N}$,  $k_1\in\{1,2,\ldots,b-1\}$ and $k_2,\ldots,k_{m}\in\{0,1,\ldots,b-1\}$, denote
$$I_{k_1,k_2,\ldots,k_{m}}=\left[\log_{b}\left(\sum_{j=1}^{m} k_jb^{1-j}\right), \log_{b}\left( \sum_{j=1}^{m} k_jb^{1-j}+b^{1-m}\right)\right)\subset [0,1).$$
The next simple lemma motivates this definition.

\begin{lem}\label{lemma:intervalsdescription}
Let $x\in\mathbb{R}_{>0}$, with digital expansion base $b$
    $$
     x=\left(\sum_{j=1}^{\infty}d_{j}b^{1-j}\right)b^{M}, \quad M\in\mathbb{Z}.
    $$
    Let $(k_n)_{n\in\mathbb{N}}\in\left\{0,1,2,\ldots, b-1\right\}^{\mathbb{N}}$ be values of digits base $b$ with $k=k_1\not=0$, and let $(m_n)_{n\in\mathbb{N}}$ be a strictly increasing sequence of natural numbers. Let $L,\ell\in\mathbb{N}$ be natural numbers with $L,\ell\geq 2$. Then:
\begin{enumerate}
\item 
\begin{equation} \label{eqn:1digitcase}
\left\{d_1=k\right\}=\left\{\log_{b}x\in\bigcup_{n\in\mathbb{Z}}\left(I_k+n\right)\right\},
\end{equation}

\item
\begin{equation} \label{eqn:manyfixeddigits}
\left\{d_{1}=k_1, d_{2}=k_2,...,d_{\ell}=k_{\ell}\right\}=
\left\{\log_{b}x\in\bigcup_{n\in\mathbb{Z}}\left(I_{k_1,k_2,\ldots,k_{\ell}}+n\right)\right\},
\end{equation}

\item
\begin{equation}
\left\{d_m=k\right\}=
\left\{\log_{b}x \in\bigcup_{\substack{n\in\mathbb{Z},\\d_1,d_2,...,d_{m-1}}}\left(I_{d_1,d_2,\ldots, d_{m-1},k}+n\right)\right\},
\end{equation}
where $d_1$ varies over $\{1,2,...,b-1\}$, and $d_2,...,d_{k-1}$ vary over $\{0,1,2,...,b-1\}$.
\item 
\begin{equation} \label{eqn:manydigitsshift}
\left\{d_{m_1}=k_1, d_{m_2}=k_2,...,d_{m_L}=k_L\right\}=
\left\{\log_{b}x\in\bigcup_{n\in\mathbb{Z}}\bigcup_{\substack{ d_j, j\in\{1,...,m_L\}, \\j\not=m_s \text{ for }s\in\{1,...,L\},\\d_{m_s}=k_s, \text{ for }s\in\{1,...,L\}}}\left(I_{d_1,d_2,\ldots,d_{m_L}}+n\right)\right\}.
\end{equation}
\end{enumerate}
\end{lem}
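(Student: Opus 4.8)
The plan is to reduce the whole lemma to one statement about the \emph{mantissa} $\mu(x):=x\,b^{-\lfloor\log_b x\rfloor}$: for every $\ell\ge 1$, the event ``the first $\ell$ digits of $x$ are $k_1,\dots,k_\ell$'' coincides with $\{\mu(x)\in[a_\ell,\,a_\ell+b^{1-\ell})\}$, where $a_\ell:=\sum_{j=1}^{\ell}k_jb^{1-j}\in[1,b)$. Granting this, parts (1) and (2) follow immediately after applying $\log_b$, and parts (3)--(4) follow from (2) by writing the relevant events as finite disjoint unions of events of the type treated in (2). So I would first record the elementary properties of $\mu$, then prove the displayed equivalence and deduce (1)--(2), and finally decompose the events in (3)--(4).

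First, with $M=\lfloor\log_b x\rfloor$ one has $x=\mu(x)\,b^M$, $\mu(x)\in[1,b)$, and $\log_b\mu(x)=\{\log_b x\}$, the fractional part of $\log_b x$; moreover, multiplying by a power of $b$ does not change the digit string, so $d_j(x)=d_j(\mu(x))$ for all $j$ and $\mu(x)=\sum_{j\ge 1}d_jb^{1-j}$. Consequently, for any $I\subset[0,1)$ the set $\{\log_b x\in\bigcup_{n\in\mathbb Z}(I+n)\}$ is literally $\{\{\log_b x\}\in I\}=\{\log_b\mu(x)\in I\}$; this is how the left-hand sides of (1)--(4) get matched with the right-hand sides.

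Next, I would prove the equivalence. If $d_j=k_j$ for all $j\le\ell$, then $\mu(x)=a_\ell+\sum_{j>\ell}d_jb^{1-j}$ and the tail obeys $0\le\sum_{j>\ell}d_jb^{1-j}\le\sum_{j>\ell}(b-1)b^{1-j}=b^{1-\ell}$, the value $b^{1-\ell}$ not being attained under the standard convention (the digit string not eventually equal to $b-1$); hence $\mu(x)\in[a_\ell,a_\ell+b^{1-\ell})$. Conversely, using the greedy formula $d_\ell=\lfloor b^{\ell-1}(\mu(x)-\sum_{j<\ell}d_jb^{1-j})\rfloor$, a short induction on $\ell$ shows that $\mu(x)\in[a_\ell,a_\ell+b^{1-\ell})$ forces $d_j=k_j$ for all $j\le\ell$: the inclusion $[a_\ell,a_\ell+b^{1-\ell})\subset[a_{\ell-1},a_{\ell-1}+b^{2-\ell})$ takes care of the first $\ell-1$ digits, and then $b^{\ell-1}(\mu(x)-a_{\ell-1})\in[k_\ell,k_\ell+1)$ gives $d_\ell=k_\ell$. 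Since $a_\ell+b^{1-\ell}\le b$ and, by definition, $I_{k_1,\dots,k_\ell}=[\log_b a_\ell,\log_b(a_\ell+b^{1-\ell}))$, applying the increasing map $\log_b$ to this equivalence and combining with the previous paragraph yields (2); taking $\ell=1$, where $a_1=k$ and $I_{k}=[\log_b k,\log_b(k+1))$, gives (1).

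Finally, both of the remaining events are finite disjoint unions of events already handled. Indeed $\{d_m=k\}$ is the disjoint union, over $\ell_1\in\{1,\dots,b-1\}$ and $\ell_2,\dots,\ell_{m-1}\in\{0,\dots,b-1\}$, of $\{d_1=\ell_1,\dots,d_{m-1}=\ell_{m-1},\,d_m=k\}$; applying (2) with $\ell=m$ to each piece and unioning the translated intervals $I_{\ell_1,\dots,\ell_{m-1},k}+n$ over all $n\in\mathbb Z$ and all $(\ell_1,\dots,\ell_{m-1})$ gives the claimed identity (and these intervals and their $\mathbb Z$-translates are pairwise disjoint, which is what one uses to pass to a sum of probabilities afterwards). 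Likewise $\{d_{m_1}=k_1,\dots,d_{m_L}=k_L\}$ is the disjoint union, over all assignments of the digits $d_j$ with $j\le m_L$, $j\notin\{m_1,\dots,m_L\}$, and $d_{m_s}=k_s$ for all $s$, of the event prescribing $d_1,\dots,d_{m_L}$; applying (2) with $\ell=m_L$ to each piece finishes the proof. The one point requiring care throughout is the behaviour at the right endpoint of the half-open intervals, i.e.\ the convention-dependent, Lebesgue-null ambiguity of digital expansions; once the standard convention is fixed this is entirely routine, so I do not expect a genuine obstacle.
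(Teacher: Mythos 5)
Your proof is correct and follows essentially the same route as the paper's: identify each digit event with a constraint on $\log_b x \bmod 1$, handle leading digits directly (your explicit mantissa bookkeeping is just a more detailed version of the paper's one-line computation for part 1), and then obtain parts (3) and (4) from part (2) by decomposing over the unconstrained lower-order digits. The only difference is one of exposition — you spell out the forward and backward implications and the greedy-digit induction that the paper dismisses with ``analogous'' and ``details left to the reader.''
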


\begin{proof}
\begin{enumerate}
\item
\begin{multline*} 
\left\{d_1=k\right\}=\bigcup_{n\in\mathbb{Z}}\left\{kb^n\leq x<(k+1)b^n\right\}=\bigcup_{n\in\mathbb{Z}}\left\{\log_{b}k+n\leq \log_{b}x<\log_{b}(k+1)+n\right\}\\
=\left\{\log_bx\in\bigcup_{n\in\mathbb{Z}}[\log_{b}k+n,\log_{b}(k+1)+n)\right\}.
\end{multline*}
We recognise the right-hand side of \eqref{eqn:1digitcase}.
\item
The computation is analogous to the one for a single digit. We leave the details to the reader. 
\item 
For this case observe that
$$ \left\{d_m=k\right\}=\bigcup_{k_1\in\{1,2,...,b-1\}}\bigcup_{\substack{k_2,...,k_{m-1}\in
\{0,1,2,...,b-1\}}}\left\{d_{1}=k_1, d_{2}=k_2,...,d_{m}=k\right\}.$$

Then the conclusion follows from \eqref{eqn:manyfixeddigits}. 


\item 
This case is completely analogous to the previous one if we replace $k$ with $k_L$ and $m$ with $m_L$ only we do not need to take union over the $m_1$-th, $m_2$-th,...,$m_{L-1}$-th digits and instead they are fixed.  We leave the details to the reader.



\end{enumerate}
\end{proof}

The above lemma illustrates that the events of interest concerning the digits are determined at the logarithmic scale by shifted copies of the same collection of disjoint intervals in $[0,1).$ The next two propositions allow us to estimate the probability of these events via the uniform distribution $\mathcal{U}(0,1)$.

\begin{lem}\label{lemma:poissonsum}
Let $\alpha>1$. Then
\begin{equation}
d_{TV}\left(\alpha\mathcal{N}(0,1)\mod 1,\quad \mathcal{U}(0,1)\right)\leq4\exp\left(-\alpha^22\pi^2\right).
\end{equation}

\end{lem}
\begin{proof}
It suffices to restrict our attention to a union over a set of disjoint intervals. Let $\left\{[a_1,b_1), [a_2,b_2),\ldots,[a_n,b_n)\ldots\right\}$ be a set of disjoint intervals contained in $[0,1]$, with \\$\sum_{n\in\mathbb{N}}(b_n-a_n)=l\leq 1$.
	Set
	$$P=\mathbb{P}\left(\alpha\mathcal{N}(0,1)\in\bigcup_{k\in\mathbb{Z}}\bigcup_{n\in\mathbb{N}}[a_n+k,b_n+k)\right).$$
By countable additivity and after bringing one sum under the integral sign we find
\begin{equation*}
P=\sum_{k\in\mathbb{Z}}\sum_{n=1}^{\infty}\int_{a_n+k}^{b_n+k}\frac{1}{\alpha\sqrt{2\pi}}\exp\left(-\frac{x^2}{2\alpha^2}\right)dx
=\sum_{n=1}^{\infty}\int_{a_n}^{b_n}\sum_{k\in\mathbb{Z}}\frac{1}{\alpha\sqrt{2\pi}}\exp\left(-\frac{(x+k)^2}{2\alpha^2}\right)dx.
\end{equation*}
Then, using Poisson's summation formula we obtain
$$
P=\sum_{n=1}^{\infty}\int_{a_n}^{b_n}\sum_{k\in\mathbb{Z}}\exp(-\alpha^22\pi^2k^2)e^{ik2\pi x}dx.
$$
By splitting off the $k=0$ term we then have
\begin{multline*}
P=\sum_{n=1}^{\infty}\int_{a_n}^{b_n}dx+\sum_{n=1}^{\infty}\sum_{k\not=0}\exp(-\alpha^22\pi^2k^2)\frac{e^{2\pi i b_n k}-e^{2\pi i a_n k}}{2\pi i k}\\=l+\sum_{k\not=0}\exp(-\alpha^22\pi^2k^2)\sum_{n=1}^{\infty}\frac{e^{2\pi i b_n k}-e^{2\pi i a_n k}}{2\pi i k}.
\end{multline*}
To prove the statement it remains to bound the series on the right-hand side. 
From the observation that $|e^{ib}-e^{ia}|\leq |b-a|$ it follows that
\begin{equation*}
\left|\sum_{n=1}^{\infty}\frac{e^{2\pi i b_n k}-e^{2\pi i a_n k}}{2\pi i k}\right|\leq\sum_{n=1}^{\infty}\frac{2\pi k b_n-2\pi k a_n}{2\pi k}=\sum_{n=1}^\infty(b_n-a_n)=l\leq1.
\end{equation*}
Thus,
\begin{multline*}
\left|P-l\right|\leq 2\sum_{k=1}^{\infty}\exp(-\alpha^22\pi^2k^2)
\leq 2\exp(-\alpha^22\pi^2)\sum_{k=0}^{\infty}\exp(-k\alpha^22\pi^2)=\frac{2\exp(-\alpha^22\pi^2)}{1-\exp(-\alpha^22\pi^2)}
\end{multline*}
and this completes the proof.
\end{proof}
Note that  Lemma \ref{lemma:poissonsum} demonstrates how close the normal distribution $\mathcal{N}(0,\alpha)$ modulo $1$ is  to a uniform random variable on $[0,1)$ as $\alpha \to \infty$.  We then proceed to use Theorem \ref{thm:totalvariation} to show that $\log \left|D_{N,\beta}(\theta)\right| \mod 1$  is close to $\mathcal{U}(0,1)$. The next proposition allows us to do that.

\begin{prop}[Total variation convergence modulo $1$]\label{prop:cortotalvariation}
Let $(\alpha_N)_{N\in\mathbb{N}},(\gamma_N)_{N\in\mathbb{N}}$ be sequences of positive real numbers such that
$$\alpha_N\rightarrow\infty,\quad\text{as}\quad N\rightarrow\infty, \quad\gamma_{N}\rightarrow0,\quad\text{as}\quad N\rightarrow\infty.$$ 
 Suppose $X_N$ is a sequence of positive random variables such that 
$$d_{TV}(\log X_N, \alpha_{N}\mathcal{N}(0,1))=\mathcal{O}(\gamma_N).$$
Then for any fixed $b>0$,
\begin{equation}\label{eqn:tvgeneral}
d_{TV}\left(\log_b X_N\mod 1,\quad \mathcal{U}(0,1)\right)=\mathcal{O}\left(\gamma_N+\exp\left(-\frac{2\pi\alpha_N^2}{\log^2b}\right)\right).
\end{equation}
\end{prop}
\begin{proof}
For a sequence $X_N$ as in the setting,
\begin{multline*}
d_{TV}\left(\log_b X_N \mod 1,\quad \mathcal{U}(0,1)\right)\\
\leq d_{TV}\left(\log_b X_N, \quad\log_b \alpha_N\mathcal{N}(0,1)\right)\\+d_{TV}\left(\log_b \alpha_N\mathcal{N}(0,1)\mod 1,\quad\mathcal{U}(0,1)\right)\\=\mathcal{O}\left(\gamma_N+\exp\left(-\frac{2\pi\alpha_N^2}{\log^2b}\right)\right),
\end{multline*}
by an application of Lemma \ref{lemma:poissonsum}. Hence we proved \eqref{eqn:tvgeneral}. 

\end{proof}

Proposition \ref{prop:cortotalvariation} is the core part of Theorem \ref{thm:sufficientconditions}. Since the conditions of Proposition \ref{prop:cortotalvariation} are satisfied in the setting of $\left|D_{N,\beta}(\theta)\right|$, deducing our main results is a matter of understanding the lengths of the intervals determining the digits of a random variable as described in Lemma \ref{lemma:intervalsdescription}. In the case of Theorem \ref{thm1}, as we see in \eqref{eqn:manyfixeddigits}, this is the length of the interval $\left[\log_{b}\left(\sum_{j=1}^{\ell} k_jb^{1-j}\right), 
\log_{b}\left( \sum_{j=1}^{\ell} k_jb^{1-j}+b^{1-\ell}\right)\right)$ for digits $k_1\in\left\{1,2,...,b-1\right\}$, $k_2,\ldots,k_{\ell}\in\{0,1,\ldots,b-1\}$ which is $\log_b\left(1+\frac{1}{\sum_{j=1}^{\ell} k_jb^{l-j}}\right)$. To understand the length of the intervals for Theorem \ref{thm2} we prove the next lemma.

\begin{lem}[Length of intervals]\label{lemma:lengthofintervals}
Let $m_1,m_2,...,m_L\in\mathbb{N}$, with $2\leq m_1< m_2<...<m_L$ and let $k_1,k_2,...,k_L\in\{0,1,\ldots, b-1\}$ be digital values.  Consider the collection of $(b-1)b^{m_L-L-1}$ disjoint intervals in $[0,1]$, indexed by the values $k_1,k_2,\ldots,k_L\in\{0,1,\ldots,b-1\}$ of the digits in positions $m_1,m_2,\ldots,m_L$ given by
 \begin{equation}\label{eqn:Adefn}
 \mathbb{A}^{m_1,m_2,...,m_L}_{k_1,k_2,\ldots,k_L}=\bigcup_{\substack{d_1\in\{1,2,\ldots,b-1\},\\d_2,...,d_{m_L}\in\{0,1,2,...,b-1\}, \\ d_{m_1}=k_1, d_{m_2}=k_2, ..., d_{m_L}=k_L}}I_{d_1,d_2,\ldots,d_{m_L}}.
 \end{equation}
Then the Lebesgue measure of $\mathbb{A}^{m_1,m_2,...,m_L}_{k_1,k_2,\ldots,k_L}$, denoted by $\left|\mathbb{A}^{m_1,m_2,...,m_L}_{k_1,k_2,\ldots,k_L}\right|$, satisfies the following bound
\begin{equation}
\left|\left|\mathbb{A}^{m_1,m_2,...,m_L}_{k_1,k_2,\ldots,k_L}\right|-\frac{1}{b^L}\right|\leq \log_{b}\left(1+\frac{1}{b^{m_1-1}}\right)<\frac{1}{b^{m_1-1}}.
\end{equation}
\end{lem}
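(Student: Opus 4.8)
The plan is to reduce the measure of $\mathbb{A}^{m_1,\ldots,m_L}_{k_1,\ldots,k_L}$ to a telescoping sum of interval lengths and then compare it with the "idealized" value $b^{-L}$ by exploiting near-cancellation in the logarithm. First I would observe that the intervals $I_{d_1,\ldots,d_{m_L}}$ appearing in \eqref{eqn:Adefn} are pairwise disjoint (for distinct digit strings) and each has length $\log_b\bigl(1+1/\sum_{j=1}^{m_L}d_j b^{m_L-j}\bigr)$; writing $s=s(d_1,\ldots,d_{m_L})=\sum_{j=1}^{m_L}d_j b^{m_L-j}$, this length is $\log_b(s+1)-\log_b(s)$. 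Hence
\[
\left|\mathbb{A}^{m_1,\ldots,m_L}_{k_1,\ldots,k_L}\right|=\sum_{(d_1,\ldots,d_{m_L})}\bigl(\log_b(s+1)-\log_b s\bigr),
\]
where the sum is over all admissible strings (i.e.\ $d_1\neq 0$, and $d_{m_i}=k_i$ for $i=1,\ldots,L$). The key structural fact is that, as $(d_1,\ldots,d_{m_L})$ runs over all admissible strings, the integer $s$ runs over a union of arithmetic-progression-like blocks of consecutive integers, so the sum telescopes blockwise.

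Next I would make the block structure explicit. Fixing the values of the constrained digits $d_{m_i}=k_i$, the remaining free digits are $d_j$ for $j\in\{1,\ldots,m_L\}\setminus\{m_1,\ldots,m_L\}$, and among these the least significant free position is $j=m_L-1$ or, more precisely, the largest index $j\le m_L$ not equal to some $m_i$. If the last digit $d_{m_L}$ is free then $s$ ranges over full blocks of $b$ consecutive integers and $\sum(\log_b(s+1)-\log_b s)$ telescopes exactly; if $d_{m_L}=k_L$ is constrained, one still gets telescoping over the least free digit, giving blocks of $b$ consecutive $s$-values shifted by $k_L$. In either case, summing the telescoped contributions one is left with a sum of the form $\sum_r \log_b\!\bigl(\tfrac{r b^{?}+c_2}{r b^{?}+c_1}\bigr)$ over the higher-order free digits, and after iterating the telescoping down through all free positions one arrives at an expression that differs from the "perfect" value $b^{-L}$ only through boundary terms at the top of the digit range (positions $1,\ldots,m_1-1$), because the constraints at positions $m_1,\ldots,m_L$ each contribute a factor $1/b$ when the telescoping is clean and the only defect is caused by the most significant constrained digit $m_1$ together with the requirement $d_1\neq 0$.

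Then I would quantify the defect. After telescoping all free digits below position $m_1$, the measure equals
\[
\sum_{\substack{d_1,\ldots,d_{m_1-1}\\ d_1\neq 0}} b^{-L}\Bigl(\log_b(S+b^{1-m_1})-\log_b S\Bigr)\Big/\log_b\bigl(1+b^{1-(m_1-1)}\bigr)
\]
type of expression — more cleanly, it is $b^{-L}$ times a telescoping sum over the top $m_1-1$ digits whose total would be exactly $1$ were it not for the endpoint effects, and the total error is controlled by the length of a single top-level interval, namely $\log_b(1+b^{-(m_1-1)}) = \log_b(1+b^{1-m_1})$. Using $\log_b(1+x)<x/\ln b \le x$... actually the paper's stated bound is $\log_b(1+b^{1-m_1})<b^{1-m_1}$ wait — the claim is $<1/b^{m_1-1}=b^{1-m_1}$, which follows from $\log_b(1+x)<x\log_b e<x$ for... no: $\log_b(1+x)=\ln(1+x)/\ln b<x/\ln b$, and for $b\ge 2$, $\ln b\ge\ln 2<1$, so this does not immediately give $<x$. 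Instead I would use $\log_b(1+x)\le x$ directly, which holds because $1+x\le b^x$ fails in general but $1+x\le e^x\le b^{x/\ln b}$... the cleanest route: $\log_b(1+x) = \frac{\ln(1+x)}{\ln b} \le \frac{x}{\ln b}$, and since the relevant $x=b^{1-m_1}$ with $m_1\ge 2$ so $x\le b^{-1}\le 1/2$, combined with $1/\ln b \le 1/\ln 2 < 1.443$ this gives $\log_b(1+x)< 1.443\cdot b^{1-m_1}$; to get the sharper $<b^{1-m_1}$ one uses instead $\ln(1+x)\le x$ and separately that we only need the bound against $b^{m_1-1}\ge b$, so $\log_b(1+b^{1-m_1}) = \log_b\frac{b^{m_1-1}+1}{b^{m_1-1}} < \frac{1}{b^{m_1-1}\ln b}\cdot\frac{b^{m_1-1}+1}{b^{m_1-1}}$, hmm. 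I will simply invoke the elementary inequality $\log_b(1+1/n)<1/n$ valid for all integers $n\ge 1$ and $b\ge 2$ (since $(1+1/n)^n<e\le b^{?}$... actually $(1+1/n)^n<e<4\le$ does not suffice for $b=2$; but $(1+1/n)^n < e$ and we need $(1+1/n)^n< b$, true for $b\ge 3$; for $b=2$ one checks $\log_2(1+1/n)<1/n \iff 2^{1/n}>1+1/n$, which holds by convexity of $2^x$). So the final inequality is genuinely elementary and I would dispatch it with a one-line convexity argument.

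The main obstacle will be bookkeeping the telescoping cleanly through the constrained positions $m_1<m_2<\cdots<m_L$ — in particular verifying that each constraint $d_{m_i}=k_i$ contributes exactly a factor $b^{-1}$ to the leading term with no additional error, so that the \emph{only} surviving discrepancy is the single top-level boundary term $\log_b(1+b^{1-m_1})$. I expect this is cleanest done by induction on $L$ (or on $m_L$): peel off the least significant free digit, telescope it, and observe that the resulting sum has exactly the same form with one fewer free digit, tracking that constrained digits pass through the telescoping inertly. Everything else — disjointness of the $I$'s, the formula for $|I_{d_1,\ldots,d_m}|$, and the final elementary logarithm estimate — is routine.
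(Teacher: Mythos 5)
Your telescoping plan fails at the very first step: since $m_L=\max\{m_1,\ldots,m_L\}$ is itself one of the constrained positions, the least significant digit $d_{m_L}$ is always fixed to $k_L$, so every admissible integer $s=\sum_{j=1}^{m_L}d_jb^{m_L-j}$ satisfies $s\equiv k_L\pmod b$. The intervals $[s,s+1)$ are therefore separated by gaps of size $b-1$, and $\sum_s\bigl(\log_b(s+1)-\log_b s\bigr)$ does not telescope over any free digit --- varying a digit at position $j<m_L$ moves $s$ in steps of $b^{m_L-j}\geq b$, never by $1$, so consecutive endpoints never match. Already the simplest case $L=1$, $m_1=2$ gives $|\mathbb{A}_k|=\sum_{d_1=1}^{b-1}\log_b\tfrac{d_1b+k+1}{d_1b+k}$, which admits no telescope, so the induction you sketch cannot get started. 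The paper's argument is global rather than term-by-term: it observes that the sets $\mathbb{A}^{m_1,\ldots,m_L}_{k_1,\ldots,k_L}$ over all $(k_1,\ldots,k_L)$ partition $[0,1)$ and hence sum to one; that $|\mathbb{A}_{\mathbf{k}}|$ is monotone decreasing in $\sum_j k_jb^{-m_j}$ by termwise comparison of interval lengths, so the extremes are $|\mathbb{A}_{\textbf{0}}|$ and $|\mathbb{A}_{\textbf{b-1}}|$; and that $|\mathbb{A}_{\textbf{0}}|-|\mathbb{A}_{\textbf{b-1}}|<\log_b(1+b^{1-m_1})$ via a carrying/shift injection (replace each constrained digit $b-1$ by $0$ and increment the $(m_1-1)$-th digit) pairing each interval of $\mathbb{A}_{\textbf{b-1}}$ with a strictly shorter one of $\mathbb{A}_{\textbf{0}}$, the only loss being the small set of overflow strings whose total length is one top-level interval $\log_b(1+b^{1-m_1})$. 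Sandwiching then places every $|\mathbb{A}_{\mathbf{k}}|$ and $b^{-L}$ in a common interval of that length. Some analogue of this global partition-plus-monotonicity argument is needed; a local telescope cannot produce the bound.

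A secondary issue: your justification of the closing elementary inequality is also incorrect. You assert $2^{1/n}>1+1/n$ ``by convexity of $2^x$'', but convexity gives the \emph{opposite} --- a convex function lies below its chords, and the chord of $2^x$ from $x=0$ to $x=1$ is $1+x$, so $2^{1/n}<1+1/n$ for $n\geq2$. In fact $\log_2(1+2^{1-m_1})>2^{1-m_1}$ for every $m_1\geq 2$, so the strict inequality $\log_b(1+b^{1-m_1})<b^{1-m_1}$ fails for $b=2$ (it holds for $b\geq3$ since $\ln b>1$). The $\mathcal{O}(b^{-m_1(N)})$ rate in Theorem~\ref{thm2} is unaffected, because $\log_b(1+x)\leq x/\ln b$, but as written your convexity argument proves the wrong direction.
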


\begin{proof}[Proof of Lemma \ref{lemma:lengthofintervals}]
By construction, $x\in{ [1,b)}$ has first $m_L$ digits $d_1,d_2,...,d_{m_L}$ if and only if $\log_bx\in I_{d_1,d_2,\ldots,d_{m_L}}$, recall \eqref{eqn:manyfixeddigits}. Hence the set of such intervals (varied over $d_1,\ldots, d_{m_L}$ but with $m_L$ fixed) is a partition of $[0,1)$ and bijects with decimal numbers in $[1,b)$ with representation base $b$, $\sum_{j=1}^{m_L}d_jb^{1-j}$, or $m_L$ tuples $(d_1,d_2,\ldots, d_{m_L})\in\{1,2,\ldots,b-1\}\times\{0,1,\ldots,b-1\}^{m_L-1}$. We will use this correspondence throughout the proof. Observe further that, again by construction, $x$ lies in the interval ${[1,b)}$ and has $m_1$-th digit $k_1$, $m_2$-th digit $k_2$,..., $m_L$-th digit $k_L$ if and only if $\log_b(x) \in \mathbb{A}^{m_1,m_2,...,m_L}_{k_1,k_2,\ldots,k_L}$, recall \eqref{eqn:manydigitsshift}. It follows that $\mathbb{A}^{m_1,m_2,...,m_L}_{k_1,k_2,\ldots,k_L}$ for $k_1,\ldots,k_{L}\in \left\{0,1,\ldots,b-1\right\}$ form  a partition of the interval $[0,1)$. Hence
\begin{equation}\label{eqn:addto1}
\sum_{k_1,k_2,\ldots,k_L}\left|\mathbb{A}^{m_1,m_2,...,m_L}_{k_1,k_2,\ldots,k_L}\right|=1.
\end{equation}
Now for any fixed $(k_1,k_2,...,k_L)$, each individual interval $I_{d_1,d_2,\ldots,d_{m_L}}$ in $\mathbb{A}^{m_1,m_2,...,m_L}_{k_1,k_2,\ldots, k_L}$ indexed by $d_1,d_2,...,d_{m_L}$ has length $\log_b\left(1+\frac{1}{\sum_{m=1}^{m_L} d_mb^{m_L-m}}\right)$. Hence,
\begin{equation}\label{eqn:descriptionsum}
\left|\mathbb{A}^{m_1,m_2,...,m_L}_{k_1,k_2,\ldots,k_L}\right|=\sum_{\substack{ d_m, m\in\{1,...,m_L\},\\d_{m_j}=k_j, \text{ for }j\in\{1,...,L\}}}\log_b\left(1+\frac{1}{\sum_{m=1}^{m_L} d_mb^{m_L-m}}\right).
\end{equation}
We proceed to argue that all $\left|\mathbb{A}^{m_1,m_2,...,m_L}_{k_1,k_2,\ldots,k_L}\right|$ are of the same order of magnitude as $m_1\rightarrow~\infty$. Let $(k_1,k_2,...,k_L),(k_1',k_2',...,k_L')\in\left\{0,1,2,...,b-1\right\}^L$ be two distinct $L$-tuples such that $$\sum_{j=1}^{L} k_jb^{1-m_j}<\sum_{j=1}^{L} k_j'b^{1-m_j},$$ then comparing the sums of the form of \eqref{eqn:descriptionsum} termwise and using the monotonicity of $y \mapsto \log\left(1+\frac{1}{y}\right)$ we see that
$$\left|\mathbb{A}^{m_1,m_2,...,m_L}_{k_1,k_2,\ldots,k_L}\right|>\left|\mathbb{A}_{k_1',k_2',...,k_L'}^{m_1,m_2,\ldots,m_L}\right|.$$
In particular, if we denote $\mathbb{A}_{0,0,...,0}^{m_1,m_2,\ldots,m_L}=\mathbb{A}_{\textbf{0}}$, $\mathbb{A}_{b-1,b-1,...,b-1}^{m_1,m_2,\ldots,m_L}=\mathbb{A}_{\textbf{b-1}}$, then for any $(k_1,k_2,...,k_L)\in \left\{0,1,2,...,b-1\right\}^L$,
\begin{equation}\label{eqn:monotonicitydeduction}
|\mathbb{A}_{\textbf{0}}|\geq \left|\mathbb{A}^{m_1,m_2,...,m_L}_{k_1,k_2,\ldots,k_L}\right|\geq|\mathbb{A}_{\textbf{b-1}}|.
\end{equation}
We now show $$|\mathbb{A}_{\textbf{b-1}}|>|\mathbb{A}_{\textbf{0}}|-\log_b\left(1+\frac{1}{b^{m_1-1}}\right).$$ 
We compare the terms as in \eqref{eqn:descriptionsum} for $\left|\mathbb{A}_{\textbf{b-1}}\right|$ and $\left|\mathbb{A}_{\textbf{0}}\right|$  using the following "shift" operation. Given an index  $(d_1,d_2,\ldots,d_{m_L})$  for  interval in  $\mathbb{A}_{\textbf{b-1}}$ we construct a new index $(d_1', d_2',\ldots, d_{m_L}')$ for an interval in $\mathbb{A}_{\textbf{0}}$ by  replacing the $m_1$-th, $m_2$-th,...,$m_L$-th digit (which are $b-1$) with $0$ and raising the value of the $(m_1-1)$-th digit by $+1$. More precisely,  the first $m_1-1$ digits $(d_1',d_2',\ldots, d_{m_1-1}')$ are the digits of the integer $1+\sum_{j=1}^{m_1-1}d_jb^{m_1-1-j}$. We will exclude the special  cases with $d_1=d_2=\ldots=d_{m_1-1}=b-1$ (which would produce an additional digit).~\footnote{Three examples for this operation base $10$ for $L=2$, $m_1=3$, $m_2=6$ are: $129789\mapsto 130780$, $919449\mapsto 920440$, $299569\mapsto 300560$. We exclude the special  cases  $999229$ as it would map to $1000220$ which has $7$ digits instead of~$6$.} Observe also that by construction $\sum_{j=1}^{m_L}d'_jb^{m_L-j}>\sum_{j=1}^{m_L}d_jb^{m_L-j}$ and hence
\begin{equation*}\label{eqn:comparingtermsintervals}
\log_b\left(1+\frac{1}{\sum_{m=1}^{m_L} d_mb^{m_L-m}}\right)>\log_b\left(1+\frac{1}{\sum_{m=1}^{m_L} d_m'b^{m_L-m}}\right).
\end{equation*}
Comparing the length of intervals along this operation we see that $$\left|\mathbb{A}_{\textbf{b-1}}\setminus\left\{\substack{I_{d_1,d_2,\ldots, d_{m_L}}:\\ d_1=d_2=\ldots=d_{m_1-1}=b-1}\right\}\right|>\left|\mathbb{A}_{\textbf{0}}\setminus\left\{\substack{I_{d_1,d_2,\ldots, d_{m_L}}:\\d_1=1,d_2=\ldots=d_{m_1-1}=0}\right\}\right|.$$
In particular
$$|\mathbb{A}_{\textbf{b-1}}|>|\mathbb{A}_{\textbf{0}}|-\left|\bigcup_{\left\{\substack{I_{d_1,d_2,\ldots, d_{m_L}}\in \mathbb{A}_{\textbf{0}}:\\d_1=1,\quad d_2=\ldots=d_{m_1}=0}\right\}}I_{d_1,d_2,...,d_{m_L}}\right|>|\mathbb{A}_{\textbf{0}}|-\left|\left\{\log_b(x):\substack{x\in[1,b),\quad x \text{ has leading digits}\\ d_1=1,\quad d_2=\ldots=d_{m_1}=0 }\right\}\right|.$$
The last term on the right-hand side is the measure of an interval of the form $\left[0,\log_b\left(1+b^{1-m_1}\right)\right)$, and so
\begin{equation}\label{eqn:nomonotonicitydeduction}
|\mathbb{A}_{\textbf{b-1}}|>|\mathbb{A}_{\textbf{0}}|-\log_b\left(1+\frac{1}{b^{m_1-1}}\right).
\end{equation}
Hence, combining \eqref{eqn:monotonicitydeduction} and \eqref{eqn:nomonotonicitydeduction} we see that for any $(k_1,k_2,...,k_L)\in\{0,1,\ldots, b-1\}^{L}$, 
\begin{equation}\label{eqn:boundfortuple}
|\mathbb{A}_{\textbf{0}}|\geq\left|\mathbb{A}^{m_1,m_2,...,m_L}_{k_1,k_2,\ldots,k_L}\right|>|\mathbb{A}_{\textbf{0}}|-\log_b(1+\frac{1}{b^{m_1-1}}).
\end{equation}
 Summing \eqref{eqn:boundfortuple} over all possible $(k_1,\ldots, k_L)\in\{0,1,\ldots,b-1\}^{L}$, due to \eqref{eqn:addto1} we see 
$$b^L|\mathbb{A}_{\textbf{0}}|>1>b^L|\mathbb{A}_{\textbf{0}}|-b^L\log_{b}\left(1+\frac{1}{b^{m_1-1}}\right),\text{ and so}$$
 \begin{equation}\label{eqn:bounding1bl}
 |\mathbb{A}_{\textbf{0}}|>\frac{1}{b^L}>\mathbb{A}_{\textbf{0}}-\log_{b}\left(1+\frac{1}{b^{m_1-1}}\right).
 \end{equation}
Hence, because of \eqref{eqn:boundfortuple} and \eqref{eqn:bounding1bl} it follows that both  $\left|\mathbb{A}^{m_1,m_2,...,m_L}_{k_1,k_2,\ldots,k_L}\right|,\frac{1}{b^L}$ lie in the interval $\left(|\mathbb{A}_{\textbf{0}}|-\log_b(1+\frac{1}{b^{m_1-1}}), \quad|\mathbb{A}_{\textbf{0}}|\right]$, and so
$$\left||\mathbb{A}_{k_1,k_2,...,k_L}|-\frac{1}{b^L}\right|< \log_{b}\left(1+\frac{1}{b^{m_1-1}}\right) \text{ for any } (k_1,k_2,\ldots, k_L)\in\{0,1,\ldots, b-1\}^L.$$
This finishes the proof.
\end{proof}
A result such as Lemma \ref{lemma:lengthofintervals} quantifies the change of behavior of higher digits for Benford's law and is necessary to study the problem we investigate in Theorem \ref{thm2}. The skewed measure towards the small digits of a random variable with uniformly distributed logarithm becomes uniform exponentially fast in the position of the digit.
We can now combine Lemma \ref{lemma:intervalsdescription}, Proposition \ref{prop:cortotalvariation} and Lemma \ref{lemma:lengthofintervals} to prove Theorem \ref{thm:sufficientconditions}. It then implies Theorems \ref{thm1} and \ref{thm2}, via Theorem \ref{thm:totalvariation} and \eqref{eqn:variancebound} in Lemma \ref{lemma:near0}.
\subsection{Proofs of Theorems \ref{thm1}, \ref{thm2}, \ref{thm:sufficientconditions}} \label{proofsthms}
\begin{proof}[Proof of Theorem \ref{thm:sufficientconditions}]
We first prove \eqref{eqn:thm1eqn}. By Lemma \ref{lemma:intervalsdescription}, \eqref{eqn:manyfixeddigits},
\begin{equation*}\mathbb{P}( d_{1,N}=k_1,d_{2,N}=k_2,...,d_{\ell,N}=k_\ell)
=\mathbb{P}\left(\bigcup_{M\in\mathbb{Z}}\left\{\log_{b}X_N\in\left(I_{k_1,k_2,\ldots,k_{\ell}}+M\right)\right\}\right).
\end{equation*}
By Proposition \ref{prop:cortotalvariation} it follows that 
\begin{multline*}
\mathbb{P}( d_{1,N}=k_1,d_{2,N}=k_2,...,d_{\ell,N}=k_\ell)=\mathbb{P}\left(\mathcal{U}(0,1)\in I_{k_1,k_2,\ldots,k_{\ell}}\right)+\mathcal{O}\left(\gamma_N+\exp\left(-\frac{2\pi\alpha_N^2}{\log^2b}\right)\right)\\
    =\log_b\left(\sum_{n=1}^{\ell}k_nb^{-n+1}+b^{-\ell+1}\right)-\log_{b}\left(\sum_{n=1}^{\ell}k_nb^{-n+1}\right)+\mathcal{O}\left(\gamma_N+\exp\left(-\frac{2\pi\alpha_N^2}{\log^2b}\right)\right), \text{ as } N\rightarrow\infty.
\end{multline*}    
By writing the difference of logarithms as one we obtain \eqref{eqn:thm1eqn}. 

The proof of \eqref{eqn:thm2eqn} is analogous. We stay consistent with the notation of Lemma \ref{lemma:lengthofintervals}, $\mathbb{A}_{k_1,k_2,\ldots,k_L}^{m_1,m_2,\ldots,m_L}$ defined from \eqref{eqn:Adefn} where the corresponding positions of the digits are \\$m_1(N), m_2(N),...,m_L(N)$ as in the setting of the theorem. Using \eqref{eqn:manydigitsshift} from Lemma \ref{lemma:intervalsdescription} we see 
$$\mathbb{P}(d_{m_1(N),N}=k_1, \ldots, d_{m_L(N),N}=k_L)=\mathbb{P}\left(\log_{b}X_N\in\bigcup_{M\in\mathbb{Z}}\left(\mathbb{A}_{k_1,k_2,...,k_L}^{m_1,m_2,\ldots, m_L}+M\right)\right).$$
Proposition \ref{prop:cortotalvariation} then implies that
$$
\mathbb{P}\left(\log_{b}X_N\in\bigcup_{M\in\mathbb{Z}}\left(\mathbb{A}_{k_1,k_2,...,k_L}^{m_1,m_2,\ldots, m_L}+M\right)\right)=\left|\mathbb{A}_{k_1,k_2,...,k_L}^{m_1,m_2,\ldots,m_L}\right|+\mathcal{O}\left(\gamma_N+\exp\left(-\frac{2\pi\alpha_N^2}{\log^2b}\right)\right),
$$
as $N \rightarrow \infty$ and the convergence is independent of the choice of digits. Finally Lemma \ref{lemma:lengthofintervals} allows us to conclude
\begin{multline*}
\mathbb{P}\left(\log_{b}X_N\in\bigcup_{M\in\mathbb{Z}}\left(\mathbb{A}_{k_1,k_2,...,k_L}^{m_1,m_2,\ldots, m_L}+M\right)\right)=\frac{1}{b^L}+\mathcal{O}\left(\gamma_N+\exp\left(-\frac{2\pi\alpha_N^2}{\log^2b}\right)+\frac{1}{b^{m_1(N)}}\right),\\ \text{ as } N\rightarrow\infty.
\end{multline*}
This concludes the second part of Theorem \ref{thm:sufficientconditions}, \eqref{eqn:thm2eqn}.
\end{proof}

\begin{proof}[Proof of Theorem \ref{thm1}]
By Theorem \ref{thm:totalvariation} and Lemma \ref{lemma:near0}, the conditions of Theorem \ref{thm:sufficientconditions} are satisfied for $\log\left|D_{N,\beta}(\theta)\right|$ with $\gamma_N=(\log N)^{-3/2+\varepsilon}$ and $\alpha_N=T_N$. Hence by Theorem \ref{thm:sufficientconditions},
\begin{multline*}
  \left|\mathbb{P}( d_{1,N}(\theta)=k_1,d_{2,N}(\theta)=k_2,...,d_{\ell,N}(\theta)=k_\ell)-\log_{b}\left(1+\frac{1}{\sum_{j=1}^{\ell}k_j b^{\ell-j}}\right)\right|\\=\mathcal O\left((\log N)^{-3/2+\varepsilon}+\exp\left(-\frac{2\pi T_N^2}{\log^2b}\right)\right)=\mathcal{O}\left((\log N)^{-3/2+\varepsilon}\right),\quad\text{ as}\quad N\rightarrow\infty.
\end{multline*}
\end{proof}

\begin{proof}[Proof of Theorem \ref{thm2}]
The proof of Theorem \ref{thm2} is an analogous application of Theorem \ref{thm:sufficientconditions} to the context of $\left|D_{N,\beta}(\theta)\right|$, together with Theorem \ref{thm:totalvariation} and Lemma \ref{lemma:near0}. 
\end{proof}

\appendix
\section{A counterexample} \label{sec:counter}

Let $(Y_n)_{n\in \mathbb{N}}$ be sequence of random variables such that the  cumulants $(C_k^{(n)})_{n\in\mathbb{N}}$ satisfy 
\begin{itemize}
\item $C_1^{(n)}=0, C_2^{(n)}\rightarrow\infty$ as $n\rightarrow\infty$ and
\item $C_k^{(n)}\leq const (k-1)!$ for $k\geq 3.$
\end{itemize}
Then it is classical that $Y_n /\sqrt{C^{(n)}_2}$ converges to a standard normal in distribution. In a heuristic derivation Keating and Snaith \cite{KeatingSnaith} showed that these bounds on the cumulants may give the following asymptotics of the density $\rho_n(x)$ of $Y_n /\sqrt{C^{(n)}_2}$
\begin{equation}\label{eq:wrongKeatingSnaith}
\rho_n(x)=\frac{1}{\sqrt{2 \pi}}e^{-x^2/2} + \mathcal O\left(\left(C_2^{(n)}\right)^{-3/2}\right), 
\end{equation}
as $n \to \infty$ (and cited as Theorem A.1 in  \cite{KantMill} in the context of the CUE). However, in the derivation of Keating and Snaith a further justification of a change of limit and integral is needed (the expansion in the integrand at the right-hand side of (50) in \cite{KeatingSnaith} is not valid on the entire  interval of integration). Such a justification is present in Theorem \ref{thm:boundingsectionsummary}.  Without this justification the result may not be true. Indeed,  it is possible to find  a sequence of random variables $Y_n$ for which the bound on the cumulants hold, but \eqref{eq:wrongKeatingSnaith} fails.

For this counterexample consider $X_n=\sum_{k=1}^n\xi_k$, where $(\xi_k)_{k\in\mathbb{N}}$ be a sequence of i.i.d. random variables with $\mathbb{P}(\xi_1=1)=\mathbb{P}(\xi_1=-1)=1/2$. The characteristic function of $\xi_1$ is $\cos(t)$, and hence $X_N=\sum_{k=1}^N\xi_k$ has characteristic function $\phi_N(t)=\cos(t)^N$. Now, using an infinite product representation of cosine, we have the following identity:
$$\log\left(\cos(t)\right)=-\sum_{m\geq1}\frac{(4^m-1)\zeta{(2m)}}{m\pi^{2m}}t^{2m}.$$
From this expression we can read off the cumulants of $\xi_1$ and deduce that the cumulants of $X_N$ are: 
$$C_j^{(N)}=\begin{cases}
0\text{, if $j$ is odd,} \\
\frac{N2(j-1)!(2^j-1)\zeta(j)}{\pi^{j}} \text{, if $j$ is even.}
\end{cases}$$
In particular $C_2^{(N)}=N$. Hence the cumulants of $Y_N=X_N/\left(C_2^{(N)}\right)^{1/4}$ satisfy the conditions described in the beginning.  However, pointwise convergence of the p.d.f. of $Y_N/\left(C_2^{(N)}\right)^{1/4}$ to $\frac{1}{\sqrt{2\pi}}e^{-x^2/2}$ cannot occur as it is discrete. 

\section{Convergence in Kolmogorov distance}

Since the Kolmogorov distance is trivially bounded by the total variation distance, Theorem \ref{thm:totalvariation} also implies a CLT for $\frac{\log |D_{N,\beta}(\theta)|}{\sqrt{\var(\log |D_{N,\beta}(\theta)|)}}$ with convergence in Kolmogorov distance of order  $\mathcal{O}\left({T_N^{-3+\varepsilon}}\right)$ for arbitrary $\varepsilon>0$.  With a little extra work, this can be improved to $\mathcal{O}\left({T_N^{-3}}\right)$, as we will show now. Note that this order of convergence has also been proved for $\beta=2$ in \cite{BHNY, Ashkan}.

We will need the following classical lemma.
\begin{lem}\label{lemma:petrov}
Let $F(x)$ be a non-decreasing function, $G(x)$ a differentiable function of bounded variation with $\sup_{x\in\mathbb{R}}|G'(x)|\leq C$ for some constant $C>0$. Assume further that $F(-\infty)=G(-\infty)$, $F(+\infty)=G(+\infty)$. Let
$$f(t)=\int_{\mathbb{R}}e^{itx}dF(x),$$
$$g(t)=\int_{\mathbb{R}}e^{itx}dG(x).$$
Let $T>0$. Then for every $b>\frac{1}{2\pi}$, there exists $r(b)>0$ s.t.
$$
\sup_{x\in\mathbb{R}}|F(x)-G(x)|\leq b\int_{-T}^{T}\left|\frac{f(t)-g(t)}{t}\right|dt+r(b)\frac{C}{T}.
$$
\end{lem}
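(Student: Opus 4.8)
The plan is to prove the classical Esseen--Petrov smoothing inequality by the usual three moves: reduce $\sup_x|F(x)-G(x)|$ to the value (or a one-sided limit) of $F-G$ near a single point, mollify with a Fej\'er kernel, and use Parseval to turn the mollified quantity into the Fourier-side integral $\int_{-T}^T|(f-g)/t|\,dt$; the free parameter in the kernel is then tuned so that the leading coefficient approaches $\tfrac1{2\pi}$. The statement is entirely classical, a full proof can be found in Petrov's monograph on sums of independent random variables, so in the paper one may simply cite it; what follows is the self-contained route I would take.

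First I would put $H=F-G$ and $\Delta=\sup_x|H(x)|$. Since $F(\pm\infty)=G(\pm\infty)$ and $F$ is monotone with finite limits, $F$ is bounded, hence $\Delta<\infty$, $H(\pm\infty)=0$, and we may assume $\Delta>0$. Given $\eta>0$, choose a point $a$ at which $|H|$ or a one-sided limit of $|H|$ exceeds $\Delta-\eta$. Using that $F$ is non-decreasing and $|G'|\le C$, one checks in each case that there is a half-open interval $J$ of some length $h>0$ on one side of $a$ and a fixed sign $\varepsilon_0\in\{\pm1\}$ with $\varepsilon_0 H\ge\Delta-\eta-Ch$ on $J$; a jump of $F$ at $a$ only helps in the case $H>0$, and the case $H<0$ is dealt with by working to the left of $a$. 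Since in the sequel only $|\int H K_T|$ will enter, the sign $\varepsilon_0$ is harmless and I may take $\varepsilon_0=1$ and $J=[a,a+h]$; set $x_0=a+h/2$, the midpoint.

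Second comes the mollification, which is the heart of the matter. Let $K_T$ be the Fej\'er kernel normalized so that $\int_{\mathbb R}K_T=1$ and $\widehat{K_T}(t)=(1-|t|/T)_+$; explicitly $K_T(u)=\tfrac{T}{2\pi}\big(\tfrac{\sin(Tu/2)}{Tu/2}\big)^2\ge0$ is even and satisfies the scaling identity $\int_{-h/2}^{h/2}K_T=p(Th)$ with $p(s)=\tfrac1{2\pi}\int_{-s/2}^{s/2}\big(\tfrac{\sin(v/2)}{v/2}\big)^2\,dv$ increasing from $0$ to $1$ as $s$ runs over $(0,\infty)$. If $\int_{-T}^T|(f-g)/t|\,dt=\infty$ there is nothing to prove, so assume it finite. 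Writing $d\mu=dF-G'(u)\,du$, a finite signed measure with $\mu(\mathbb R)=f(0)-g(0)=0$ and $H(v)=\mu((-\infty,v])$, the standard Fourier/Parseval computation (legitimate here because $\widehat{K_T}$ is continuous with compact support and $\mu$ has total mass $0$) gives
\[
\int_{\mathbb R}H(x_0+u)\,K_T(u)\,du=\frac1{2\pi}\int_{-T}^{T}\frac{i}{t}\Big(1-\frac{|t|}{T}\Big)e^{-itx_0}\big(f(t)-g(t)\big)\,dt,
\]
whose modulus is at most $I:=\tfrac1{2\pi}\int_{-T}^{T}\big|(f(t)-g(t))/t\big|\,dt$. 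On the other hand, $H\ge\Delta-\eta-Ch$ on $[x_0-h/2,x_0+h/2]$ and $|H|\le\Delta$ elsewhere, while $\int_{|u|>h/2}K_T=1-p(Th)$, so
\[
\int_{\mathbb R}H(x_0+u)\,K_T(u)\,du\ \ge\ (\Delta-\eta-Ch)\,p(Th)-\Delta\,\big(1-p(Th)\big)\ =\ \Delta\big(2p(Th)-1\big)-(\eta+Ch)\,p(Th).
\]
Combining the two displays, letting $\eta\downarrow0$, and assuming $p(Th)>\tfrac12$, I obtain $\Delta\le\frac1{2p(Th)-1}I+\frac{p(Th)}{2p(Th)-1}Ch$.

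Finally I would set $h=s/T$ with $s$ a constant to be fixed, so that $\Delta\le\frac1{2\pi(2p(s)-1)}\int_{-T}^T\big|(f-g)/t\big|\,dt+\frac{p(s)\,s}{2p(s)-1}\cdot\frac{C}{T}$. Since $s\mapsto p(s)$ increases continuously from $0$ to $1$, the coefficient $\frac1{2\pi(2p(s)-1)}$ decreases continuously onto $(\tfrac1{2\pi},\infty)$; hence for any prescribed $b>\tfrac1{2\pi}$ one solves $\frac1{2\pi(2p(s)-1)}=b$ for $s=s(b)$ (equivalently $2p(s)-1=\tfrac1{2\pi b}\in(0,1)$) and takes $r(b):=\frac{p(s(b))\,s(b)}{2p(s(b))-1}<\infty$, which is exactly the asserted bound. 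The points needing genuine care, in increasing order of annoyance, are: (i) the case analysis in the reduction to a near-extremal point, in particular the interaction of jumps of $F$ with the sign of $H$; (ii) making the Parseval identity rigorous for the merely bounded (not integrable) function $H$ — which is why I route it through the finite signed measure $\mu$ with $\mu(\mathbb R)=0$ and the compactly supported $\widehat{K_T}$, and dispose of the divergent case at the outset; and (iii) the constant-chasing needed to land exactly on $b>\tfrac1{2\pi}$ with an explicit $r(b)$. None of these is deep, but (ii) is the spot where one must be honest.
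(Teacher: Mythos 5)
The paper does not prove this lemma: its ``proof'' consists of the single line ``See Theorem 2 in \cite{Petrov}.'' What you have written is therefore not a different route so much as the route the cited source itself takes -- the classical Esseen smoothing argument (reduce to a near-extremal point using the monotonicity of $F$ and the Lipschitz bound on $G$; convolve with a Fej\'er kernel $K_T$ whose Fourier transform is $(1-|t|/T)_+$; pass to the Fourier side; optimize the width parameter $s=Th$). The reduction step, the lower bound $\Delta(2p(Th)-1)-(\eta+Ch)p(Th)$ for the mollified quantity, and the final tuning of $s(b)$ via $2p(s)-1=\tfrac{1}{2\pi b}$ (which lands squarely on the claimed $b>\tfrac1{2\pi}$) are all correct.

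The one place you rightly flag as delicate is the Parseval identity
\[
\int_{\mathbb R} H(x_0+u)K_T(u)\,du \;=\; \frac1{2\pi}\int_{-T}^{T}\frac{i(1-|t|/T)}{t}\,e^{-itx_0}\bigl(f(t)-g(t)\bigr)\,dt.
\]
Routing it through the signed measure $\mu=dF-G'\,du$ with $\mu(\mathbb R)=0$ is the right idea, but the naive Fubini swap one is tempted to make requires a moment (or at least log-moment) condition on $\mu$ that is not among the hypotheses, and a bald ``routed through $\mu$'' does not quite close this. A clean way to finish, still entirely in the spirit of what you outlined, is to work with the smoothed distribution functions rather than with $H$ directly: set $F_T=F*K_T$ and $G_T=G*K_T$, note that $dF_T$ and $dG_T$ are absolutely continuous with bounded densities whose Fourier transforms are the compactly supported, continuous functions $(1-|t|/T)_+f(t)$ and $(1-|t|/T)_+g(t)$, apply Fourier inversion to these densities, integrate in $x$ over $[x_0,x]$ (Fubini on a compact rectangle, unconditionally valid), and then send $x\to\infty$, using $H_T(x)\to0$ on one side and Riemann--Lebesgue (legitimate because $(f-g)/t\in L^1[-T,T]$, which you already dispose of as the only nontrivial case) on the other. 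This yields exactly the display above with no moment hypotheses and closes the gap you identified as item (ii). With that substituted, your argument is a complete proof of the lemma, whereas the paper relies on the reference.
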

\begin{proof}
See Theorem 2 in \cite{Petrov}.
\end{proof}
In particular this lemma can be applied for $G(x)=\frac{1}{\sqrt{2\pi}}e^{\frac{-x^2}{2}}$, and for $\rho_{N}(x)$ the p.d.f. of $\frac{\log|D_{N,\beta}(\theta)|}{T_N}$, $F(x)=\int_{-\infty}^{x}\rho_{N}(y)dy$. Then choosing $b=\pi$, $T=T_N^{3}$ we prove the following result.
\begin{prop}[CLT in Kolmogorov distance]
$$\sup_{x\in\mathbb{R}}\left|\mathbb{P}\left(\frac{\log|D_{N,\beta}(\theta)|}{T_N}\leq x\right)-\int_{-\infty}^{x}\frac{1}{\sqrt{2\pi}}e^{-y^2/2}dy\right|=\mathcal{O}\left(\frac{1}{T_N^3}\right).$$
\end{prop}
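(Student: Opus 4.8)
The plan is to invoke the smoothing inequality of Lemma~\ref{lemma:petrov} with $F(x)=\int_{-\infty}^x\rho_N(y)\,dy$ the distribution function of $\log|D_{N,\beta}(\theta)|/T_N$, with $G$ the standard normal distribution function (so that $C=\sup_x|G'(x)|=1/\sqrt{2\pi}$ and the hypotheses $F(\pm\infty)=G(\pm\infty)$ hold), and with $f(t)=\psi_{N,\beta}(t/T_N)$, $g(t)=e^{-t^2/2}$. Choosing $b=\pi$ and the truncation parameter $T=T_N^3$, Lemma~\ref{lemma:petrov} yields
\[
\sup_{x\in\mathbb R}|F(x)-G(x)|\le \pi\int_{-T_N^3}^{T_N^3}\left|\frac{\psi_{N,\beta}(t/T_N)-e^{-t^2/2}}{t}\right|dt+r(\pi)\frac{1}{\sqrt{2\pi}\,T_N^3}.
\]
The last term is already $\mathcal O(T_N^{-3})$, so it remains to show the integral is $\mathcal O(T_N^{-3})$ as well.

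First I would record that, by \eqref{eqn:variancebound}, $T_N^2=\mathcal O(\log N)$, hence $T_N^3\le \tfrac{\beta}{2}N^{5/7}T_N$ for all $N$ large; in particular the entire interval of integration $[-T_N^3,T_N^3]$ lies (after rescaling by $T_N$) inside the low-frequency regime of Proposition~\ref{proposition:smallt}. I then split the integral at $|t|=T_N$. On $|t|\le T_N$ I use the third-moment bound of Lemma~\ref{lemma:near0}, $|\psi_{N,\beta}(t/T_N)-e^{-t^2/2}|\le c_1 T_N^{-3}e^{-t^2/2}|t|^3$, so that this contribution is at most $c_1 T_N^{-3}\int_{\mathbb R}t^2e^{-t^2/2}\,dt=\mathcal O(T_N^{-3})$; the integrand is harmless at $t=0$ because the factor $|t|^3$ dominates the $1/|t|$ weight.

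On $T_N\le |t|\le T_N^3$ I use Proposition~\ref{proposition:smallt}, which (since $|t|\le T_N^3\le\tfrac{\beta}{2}N^{5/7}T_N$) gives $|\psi_{N,\beta}(t/T_N)|\le e^{-c_2 t^2/2}$, so the integrand is bounded by $|t|^{-1}\bigl(e^{-t^2/2}+e^{-c_2t^2/2}\bigr)\le T_N^{-1}\bigl(e^{-t^2/2}+e^{-c_2t^2/2}\bigr)$; integrating over $|t|\ge T_N$ and using the Gaussian tail estimate $\int_A^\infty e^{-ct^2/2}\,dt\le (cA)^{-1}e^{-cA^2/2}$ gives a quantity of size $\mathcal O\bigl(T_N^{-2}e^{-c_2T_N^2/2}\bigr)$, which is $o(T_N^{-3})$ (indeed smaller than any fixed negative power of $T_N$, since $T_N\to\infty$). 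Adding the two pieces and the Petrov remainder gives $\sup_x|F(x)-G(x)|=\mathcal O(T_N^{-3})$, which is the claim.

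The computation is essentially routine once the characteristic-function bounds of Section~\ref{sec:bound} are in hand; the one point that needs care — and where the improvement over the rate $\mathcal O(T_N^{-3+\varepsilon})$ of Theorem~\ref{thm:totalvariation} comes from — is the choice $T=T_N^3$ together with the observation, via \eqref{eqn:variancebound}, that this truncation stays well inside the low-frequency regime. In the total-variation argument the crude bound $|r_{M,T_N}(t)|\le 2M$ forced an $M$-cutoff and hence the loss of $T_N^{\varepsilon}$; here the $1/|t|$ weight built into Petrov's inequality makes the tail contributions exponentially small, so no such cutoff is needed and the sharp third-moment estimate of Lemma~\ref{lemma:near0} dictates the final rate $\mathcal O(T_N^{-3})$.
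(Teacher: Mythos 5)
Your proof is correct and follows essentially the same route as the paper's: you invoke the Petrov smoothing inequality with $b=\pi$ and $T=T_N^3$, split the integral at $|t|=T_N$, use Lemma~\ref{lemma:near0} on $|t|\le T_N$ and Proposition~\ref{proposition:smallt} together with a Gaussian tail bound on $T_N\le|t|\le T_N^3$. The only point you make more explicit than the paper is that $T_N^3\le\frac{\beta}{2}N^{5/7}T_N$ for large $N$, which justifies applying the low-frequency bound on the whole truncated range — a worthwhile clarification, but not a different argument.
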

\begin{proof}
By Lemma \ref{lemma:petrov},
$$\sup_{x\in\mathbb{R}}\left|\mathbb{P}\left(\frac{\log|D_{N,\beta}(\theta)|}{T_N}\leq x\right)-\int_{-\infty}^{x}\frac{1}{\sqrt{2\pi}}e^{-y^2/2}dy\right|\leq \pi\int_{-T_N^3}^{T_N^3}\left|\frac{\psi_{N,\beta}\left(\frac{t}{T_N}\right)-e^{t^2/2}}{t}\right|dt+r(\pi)\frac{C}{T_N^3}.$$
The second summand on the right-hand side is $O\left(\frac{1}{T_N^3}\right)$. It remains to estimate the integral:
\begin{multline*}
\int_{-T_N^3}^{T_N^3}\left|\frac{\psi_{N,\beta}\left(\frac{t}{T_N}\right)-e^{t^2/2}}{t}\right|dt\leq\int_{-T_N}^{T_N}\left|\frac{\psi_{N,\beta}\left(\frac{t}{T_N}\right)-e^{t^2/2}}{t}\right|dt+\int_{T_N<|t|<T_N^3}\left|\frac{\psi_{N,\beta}\left(\frac{t}{T_N}\right)-e^{t^2/2}}{t}\right|dt \\
\leq\int_{-T_N}^{T_N}\left|\frac{\psi_{N,\beta}\left(\frac{t}{T_N}\right)-e^{t^2/2}}{t}\right|dt+\int_{T_N<|t|<T_N^3}\left|\frac{\psi_{N,\beta}\left(\frac{t}{T_N}\right)}{T_N}\right|dt+\int_{T_N<|t|<T_N^3}\left|\frac{e^{-t^2/2}}{T_N}\right|dt.
\end{multline*}
We estimate the first integral using Lemma \ref{lemma:near0}, the second integral using Proposition \ref{proposition:smallt} and the third integral using the estimate $\int_{A}^{\infty}e^{-x^2/2}dx<\frac{e^{-A^2/2}}{A\sqrt{2\pi}}$ for $A>0$. We see
\begin{multline*}
\int_{-T_N^3}^{T_N^3}\left|\frac{\psi_{N,\beta}\left(\frac{t}{T_N}\right)-e^{t^2/2}}{t}\right|dt\\
\leq\int_{-T_N}^{T_N}\frac{c_1}{T_N^3}e^{-t^2/2}t^2dt+\frac{2}{T_N}\int_{T_N}^{\infty}\exp\left(-c_2\frac{t^2}{2}\right)dt+\frac{1}{T_N^2\sqrt{2\pi}}\exp\left(-\frac{T_N^2}{2}\right)\\
\leq \frac{c_1\sqrt{2\pi}}{T_N^3}+\frac{2}{T_N^2\sqrt{c_22\pi}}\exp\left(\frac{-c_2T_N^2}{2}\right)+\frac{1}{T_N^2\sqrt{2\pi}}\exp\left(-\frac{T_N^2}{2}\right)=\mathcal{O}\left(\frac{1}{T_N^3}\right).
\end{multline*}
The proof is complete.
\end{proof}

\end{document}